\documentclass[letterpaper,11pt]{article}

\usepackage{macros}
\usepackage[margin=1in]{geometry}  %
\usepackage{xspace,exscale,relsize}
\usepackage{fancybox,shadow}
\usepackage{graphicx}
\usepackage{color}
\usepackage{amsthm,amsfonts}
\usepackage{amssymb}
\usepackage{authblk}
\usepackage[title]{appendix}

\usepackage{extarrows}

\usepackage[noadjust]{cite}

\usepackage{amsmath}
\makeatletter
\let\over=\@@over \let\overwithdelims=\@@overwithdelims
\let\atop=\@@atop \let\atopwithdelims=\@@atopwithdelims
\let\above=\@@above \let\abovewithdelims=\@@abovewithdelims
\makeatother
\usepackage{rotating}

\usepackage{ifpdf}

\usepackage{subfigure}
\usepackage{psfrag}

\usepackage{prettyref}
\usepackage{enumitem}

\usepackage{tikz}
\usetikzlibrary{arrows}
\tikzstyle{int}=[draw, fill=blue!20, minimum size=2em]
\tikzstyle{dot}=[circle, draw, fill=blue!20, minimum size=2em]
\tikzstyle{init} = [pin edge={to-,thin,black}]

\usepackage[%
  CJKbookmarks=true,
  bookmarksnumbered=true,
  bookmarksopen=true,
  colorlinks=true,
  citecolor=red,
  linkcolor=blue,
  anchorcolor=red,
  urlcolor=blue
]{hyperref}

\usepackage[all]{xy}

\usepackage{mathtools}

\usepackage{ifthen}
\newboolean{aos}
\setboolean{aos}{FALSE}

\newcommand\numberthis{\addtocounter{equation}{1}\tag{\theequation}}

\ifx\eqref\undefined
\newcommand{\eqref}[1]{~(\ref{#1})}
\fi
\ifx\mod\undefined
\def\mod{\mathop{\rm mod}}
\fi

\usepackage{bm}

\newcommand{\norm}[1]{{\left\Vert #1 \right\Vert}}

\def\argmax{\mathop{\rm argmax}}

\def\exp{\mathop{\rm exp}}

\def\Var{\mathrm{Var}}

\def\simiid{\stackrel{iid}{\sim}}

\newcommand{\reals}{\mathbb{R}}

\newcommand{\Expect}{\mathbb{E}}

\newcommand{\Prob}{\mathbb{P}}
\newcommand{\prob}[1]{\mathbb{P}\left[#1\right]}

\newcommand{\TV}{{\rm TV}}

\newcommand{\iid}{iid\xspace}

\newcommand{\hellinger}{d_{\rm H}}

\newcommand{\pth}[1]{\left( #1 \right)}

\newcommand{\sth}[1]{\left\{ #1 \right\}}

\newcommand{\iiddistr}{\stackrel{\text{\iid}}{\sim}}

\newcommand{\var}{\Var}
\newcommand\indep{\protect\mathpalette{\protect\independenT}{\perp}}
\def\independenT#1#2{\mathrel{\rlap{$#1#2$}\mkern2mu{#1#2}}}
\newcommand{\Bern}{\text{Bern}}

\newcommand{\Iprod}[2]{\langle #1, #2 \rangle}
\newcommand{\indc}[1]{{1\{{#1}\}}}

\definecolor{myblue}{rgb}{.8, .8, 1}
\definecolor{mathblue}{rgb}{0.2472, 0.24, 0.6} %
\definecolor{mathred}{rgb}{0.6, 0.24, 0.442893}
\definecolor{mathyellow}{rgb}{0.6, 0.547014, 0.24}

\newcommand{\calA}{{\mathcal{A}}}

\newcommand{\calF}{{\mathcal{F}}}
\newcommand{\calG}{{\mathcal{G}}}

\newcommand{\calM}{{\mathcal{M}}}

\newcommand{\calP}{{\mathcal{P}}}

\newrefformat{eq}{(\ref{#1})}
\newrefformat{thm}{Theorem~\ref{#1}}
\newrefformat{th}{Theorem~\ref{#1}}
\newrefformat{chap}{Chapter~\ref{#1}}
\newrefformat{sec}{Section~\ref{#1}}
\newrefformat{seca}{Section~\ref{#1}}
\newrefformat{algo}{Algorithm~\ref{#1}}
\newrefformat{fig}{Fig.~\ref{#1}}
\newrefformat{tab}{Table~\ref{#1}}
\newrefformat{rmk}{Remark~\ref{#1}}
\newrefformat{clm}{Claim~\ref{#1}}
\newrefformat{def}{Definition~\ref{#1}}
\newrefformat{cor}{Corollary~\ref{#1}}
\newrefformat{lmm}{Lemma~\ref{#1}}
\newrefformat{prop}{Proposition~\ref{#1}}
\newrefformat{pr}{Proposition~\ref{#1}}
\newrefformat{app}{Appendix~\ref{#1}}
\newrefformat{apx}{Appendix~\ref{#1}}
\newrefformat{ex}{Example~\ref{#1}}
\newrefformat{exer}{Exercise~\ref{#1}}
\newrefformat{soln}{Solution~\ref{#1}}

\def\unifto{\mathop{{\mskip 3mu plus 2mu minus 1mu%
      \setbox0=\hbox{$\mathchar"3221$}%
      \raise.6ex\copy0\kern-\wd0%
\lower0.5ex\hbox{$\mathchar"3221$}}\mskip 3mu plus 2mu minus 1mu}}

\ifx\lesssim\undefined
\def\simleq{{{\mskip 3mu plus 2mu minus 1mu%
      \setbox0=\hbox{$\mathchar"013C$}%
      \raise.2ex\copy0\kern-\wd0%
\lower0.9ex\hbox{$\mathchar"0218$}}\mskip 3mu plus 2mu minus 1mu}}
\else
\def\simleq{\lesssim}
\fi

\ifx\gtrsim\undefined
\def\simgeq{{{\mskip 3mu plus 2mu minus 1mu%
      \setbox0=\hbox{$\mathchar"013E$}%
      \raise.2ex\copy0\kern-\wd0%
\lower0.9ex\hbox{$\mathchar"0218$}}\mskip 3mu plus 2mu minus 1mu}}
\else
\def\simgeq{\gtrsim}
\fi

    \newtheorem{theorem}{Theorem}
    \newtheorem{lemma}[theorem]{Lemma}
    \newtheorem{corollary}[theorem]{Corollary}

    \newtheorem{prop}[theorem]{Proposition}

    \theoremstyle{definition}

    \newtheorem{remark}{Remark}

    \newif\ifmapx
    {\catcode`/=0 \catcode`\\=12/gdef/mkillslash\#1{#1}}
    \edef\jobnametmp{\expandafter\string\csname embayes2_apx\endcsname}
    \edef\jobnameapx{\expandafter\mkillslash\jobnametmp}
    \edef\jobnameexpand{\jobname}
    \ifx\jobnameexpand\jobnameapx
    \mapxtrue
    \else
    \mapxfalse
    \fi

    \newcommand{\polylog}{\operatorname{polylog}}

\newcommand{\GNPMLE}{\hat G_{\rm NPMLE}}

    \newcommand{\Regret}{\mathsf{Regret}}

    \renewcommand{\hat}{\widehat}
    \renewcommand{\tilde}{\widetilde}

    \newcommand{\Law}{\text{Law}}
    \newcommand{\op}{\mathrm{op}}

\begin{document}
    \ifpdf
    \DeclareGraphicsExtensions{.pgf}
    \graphicspath{{figures/}{plots/}}
    \fi

    \title{Sharp regret--Hellinger bounds and optimal rates for Gaussian empirical
    Bayes via polynomial approximation}
    \author{Jiafeng Chen and Yihong Wu\thanks{
J.~Chen is with the Department of Economics, Stanford University, Stanford, CA, USA, \texttt{jiafeng@stanford.edu}.
Y.~Wu is with the Department of Statistics and Data Science, Yale University, New Haven, CT, USA,
\texttt{yihong.wu@yale.edu}.
}}
\date{\today}

\maketitle

\begin{abstract}
A central problem in the theory of empirical Bayes is to control the regret (excess risk) of a
learned Bayes rule by the Hellinger distance between the estimated and true marginal
  densities. In the normal means model, the classical result of Jiang and Zhang 
  \cite{JiangZhang2009} achieves this only after regularizing the Bayes rule and incurs an extraneous cubic logarithmic factor through a delicate recursive argument.
  
  This paper introduces a new technique, based on polynomial approximation and
   Bernstein-type inequalities for weighted $L_2$ norms, that bounds the unregularized
   regret directly. The method is conceptually simpler and yields sharper, sometimes
   optimal, regret bounds. For compactly supported priors, we prove the sharp bound that
   the regret is  $O(\epsilon^2   \frac{\log(1/\epsilon)}{\log\log(1/\epsilon)})$,
   where $\epsilon$ is the Hellinger distance between the marginal densities. The same
   method also extends to priors with exponential tails. Conversely, we show that
   regularization is genuinely necessary for heavy-tailed priors under only bounded moment
   assumptions. 
	
	As statistical consequences, we obtain improved regret bounds for the
   nonparametric maximum likelihood estimator (NPMLE). 
	Notably, for compactly supported priors, 
	by determining the optimal Hellinger rate of mixture density estimation, we show that the optimal regret for sample size $n$ scales as $\Theta(\frac{1}{n}
(\frac{\log n}{\log\log n})^2)$, attained by the NPMLE within a $\log\log n$ factor.
\end{abstract}

\tableofcontents

\section{Introduction}

Empirical Bayes (EB) is a powerful framework for large-scale statistical inference that applies Bayesian methods with a data-driven prior, thereby adapting to the latent structure in the data \cite{robbins1956empirical,zhang2003compound}. For an account of the remarkable successes of EB in both theory and practice, we refer to the monograph and survey of Efron \cite{efron2012large,efron2024empirical}. A key benchmark for an EB procedure is its \textit{regret}: the excess risk relative to the Bayesian oracle that knows the true prior, or, in the compound setting, the empirical distribution of the parameters.

A standard setting for EB is the normal means model. 
Consider $\theta \sim G$ and $Y=\theta+Z$, where $Z\sim N(0,1)$ is independent of $\theta$. Tweedie's formula expresses the Bayes estimator in terms of the \textit{score} of the 
marginal distribution of $Y$, namely, the Gaussian mixture $f_G := G * N(0,1)$:
    \begin{equation}
      m_G(y) := \Expect[\theta|Y=y]  = y + \frac{f'_G(y)}{f_G(y)}.
      \label{eq:tweedie}
    \end{equation}
    This estimator minimizes the mean-squared error (MSE) among all estimators of $\theta$: 
$\min_{\hat\theta(\cdot)} \Expect_G[(\theta-\hat\theta(Y))^2] = 
\Expect_G[(\theta-m_G(Y))^2]$.

In the context of empirical Bayes, $m_G$ in \prettyref{eq:tweedie} is referred to as \textit{oracle Bayes}, which requires the knowledge of the true prior $G$. 
When 
the Bayes rule $m_H$ with  a misspecified prior $H$ is used, the regret, namely,  the excess 
MSE over the oracle risk, is given by
\begin{equation}
\Regret(H\|G) := 
\Expect_G[(m_H(Y)-m_G(Y))^2] 
= 
\int_{\reals} \pth{\frac{f_G'}{f_G}-\frac{f_H'}{f_H}}^2 f_G dy,
\label{eq:regret}
\end{equation}
where the second equality applies Tweedie's formula \prettyref{eq:tweedie}.
This quantity is also known as \textit{relative Fisher information} 
in information theory and optimal transport \cite{villani.topics,raginsky2013concentration} and  \textit{score matching loss} in generative modeling \cite{hyvarinen2005estimation}.

In practice---and especially in the 
\textit{$g$-modeling} approach  \cite{efron2014two}---$H$ is a prior estimated from a
 training sample and $\Regret(H\|G)$ is the incurred regret when the learned Bayes rule
 $m_H$ is applied to a fresh test data point.
Widely used approaches for learning the prior include maximum likelihood and minimum
distance, both of which aim at approximating the true data-generating distribution $f_G$.
As such, a central problem in EB theory is to control the regret $\Regret(H\|G)$ using
statistical distances between $f_G$ and $f_H$, most commonly the squared Hellinger
distance:
\begin{equation}
  \hellinger^2(f_G,f_H) :=
  \int (\sqrt{f_G}-\sqrt{f_H})^2.
  \label{eq:hellinger}
\end{equation}

A foundational result in nonparametric EB of this type is due to Jiang and Zhang
\cite{JiangZhang2009}.
A substantial body of subsequent work in statistics and econometrics builds on this result; see, for
instance, \cite{jiang2020general,saha2020nonparametric,soloff2025multivariate,SW22,adusumilli2025empirical,ghosh2025stein,chen2026empirical,ignatiadis2026compound,kim2026empirical,chen2026normal}. In addition, this result also finds applications beyond the classical realm of EB; for example, it plays a crucial role in constructing optimal kernel-based estimators of score functions for diffusion models \cite{WWY24}.

A key ingredient
of Jiang--Zhang's result \cite[Lemma 1]{JiangZhang2009}
controls the following regularized version of the regret \prettyref{eq:regret}:
\[
\Regret_{\rho}(H \Vert G) := \int_\R \pr{
  \frac{f'_G}{f_G \vee \rho} - \frac{f'_H}{ f_H \vee \rho}
}^2 f_G .
\numberthis\label{eq:regularized_regret}
\]
This quantity is related to the regret of the \textit{regularized} Bayes estimator
\begin{equation}
 m_{H,\rho}(y) := y + \frac{f'_H(y)}{f_H(y)\vee\rho}
    \label{eq:bayes-rho}
\end{equation}
where the regularization parameter $\rho$ ensures the denominator is not too small.
Using a delicate recursive argument, 
Jiang and Zhang showed that \prettyref{eq:regularized_regret} can be bounded by the Hellinger distance 
$\epsilon := \hellinger(f_G,f_H)$ as follows:
\begin{equation}
\Regret_{\rho}(H \Vert G)
\lesssim \epsilon^2 \max\sth{\pth{\log \frac{1}{\rho}}^3, \log \frac{1}{\epsilon}}.
    \label{eq:JZ09}
\end{equation}
This quantity can be further related to the regret of the regularized estimator \prettyref{eq:bayes-rho} by quantifying the regularization error on the true Bayes rule, under suitable tail assumptions on the prior $G$. 
In practice, $\rho$ is typically chosen as a small polynomial of $\epsilon$, leading to a regret bound of the order $O(\epsilon^2 (\log \frac{1}{\epsilon})^3)$.

The key difficulty of proving a regret--Hellinger inequality like \prettyref{eq:JZ09} is
that regret compares the scores (derivative of log density) while Hellinger only compares
the densities themselves. To bridge this gap, Jiang and Zhang introduced a sequence $\Delta_k$ defined in terms of the $k$th derivative of $f_G-f_H$, where  $\Delta_0$ and $\Delta_1$ are approximately the Hellinger and the regularized regret, respectively. 
They then proved a remarkable recursive inequality 
$\Delta_k^2 \leq \Delta_{k-1}(\Delta_{k+1} + C \sqrt{\log(1/\rho)}  \Delta_k)$ and bounded  $\Delta_k$ separately using Fourier analysis, concluding the desired \prettyref{eq:JZ09} by backward induction.

Since its introduction, the Jiang--Zhang method has remained essentially the only route for
bounding the regret of $g$-modeling estimators,\footnote{One exception is \cite{JPW21} for
the Poisson EB, where the regret can be bounded optimally by the Hellinger thanks to the
discreteness of the Poisson model and the light tail of the prior. Nevertheless, this
method fails for the moment class and an argument \`a la  Jiang--Zhang is still needed 
\cite{SW22}.} %
with extensions to multiple dimensions \cite{saha2020nonparametric}, higher-order estimands
\cite{ghosh2025stein} (see also an earlier preprint of \cite{chen2026empirical}),
heteroskedastic variances \cite{jiang2020general,soloff2025multivariate,chen2026empirical}, and the Poisson model 
\cite{SW22}.
At the same time, it has long been suspected that this argument is not
tight,\footnote{\cite{JiangZhang2009} remarked that ``Still, the cubic power of the
logarithmic factors [...]
is crude.'' and that
``we do not believe that mathematical
induction is sharp in the argument with higher and higher order of differentiation
in the proof of Lemma 1.''
} and this looseness contributes to a $\polylog(n)$
gap between the best known regret upper bound and information-theoretic limits
\cite{polyanskiy2021sharp,kang2026function}. Moreover, it is also not obvious whether the
regularization in \eqref{eq:regularized_regret} is merely a proof technique or plays a 
more fundamental role for empirical Bayes.

\paragraph{Main contributions.}
We present a new technique for bounding regret in terms of Hellinger distance
\textit{without regularization}. Compared to the recursive argument of Jiang and Zhang,
the proposed method is conceptually simpler and yields sharper regret bounds. For
compactly supported priors, these regret bounds are \emph{optimal}, matching the lower
bound of \cite{polyanskiy2021sharp}, after combining our regret-Hellinger results with improved
local metric and bracketing entropy calculations that yield sharper convergence rates in
Hellinger distance.

The major contributions of this paper are as follows.

\begin{itemize}
    \item For priors with compact support, we show that
\begin{equation}    
\Regret(H \Vert G) \lesssim \epsilon^2 \frac{\log \frac{1}{\epsilon}}{\log \log \frac{1}{\epsilon}},
\label{eq:regret-compact-intro}
\end{equation}
and prove that this is unimprovable in the worst case. 
This result has several statistical consequences, including improved regret guarantees for the nonparametric maximum likelihood estimator (NPMLE) in EB estimation.\footnote{For additional statistical applications of \prettyref{eq:regret-compact-intro}, see the recent work \cite{han2026empirical} on EB estimation with correlated data.}

Notably, when applied to the EB setting with sample size $n$, 
\prettyref{eq:regret-compact-intro} yields sharp rates for regret.
By sharpening local entropy bounds via 
similar polynomial approximation arguments, we determine the optimal squared Hellinger rate of density estimation for the mixture density to be $\Theta(\frac{1}{n}
\frac{\log n}{\log\log n})$, and show that any optimal density estimator attains the optimal regret of $\Theta(\frac{1}{n}
(\frac{\log n}{\log\log n})^2)$, meeting the minimax lower bound of 
\cite{polyanskiy2021sharp}. Additionally, the NPMLE (but
constrained to a compact support) is regret-optimal within $\log \log n$, compared to the
previously known best bound of $O(\frac{1}{n} (\log n)^5)$ \cite{JiangZhang2009}.

\item We extend this argument to priors with exponential tails, such as subgaussian or
subexponential priors. For subgaussian priors, we show that $\Regret(H\Vert G) \lesssim \epsilon^2 \log \frac{1}{\epsilon} (\log \log \frac{1}{\epsilon})^2$, which is optimal up to $\log\log$ factors.

\item Finally, attempting to extend this argument to heavier-tailed priors reveals that
the regularization in \cite{JiangZhang2009} is in fact necessary in this regime. We show
that, under only a bounded $p$th-moment assumption, the unregularized regret can be as
large as $\Omega (\epsilon^ {2-2/p})$, while the regularized regret bound \prettyref{eq:JZ09} continues to hold.
\end{itemize}

\paragraph{Proof ideas.}
As discussed earlier, controlling regret in terms of Hellinger distance entails bounding a derivative in terms of the original function.
Such a bound is clearly impossible in general, but it becomes possible for certain classes
of regular functions, most notably \textit{polynomials}. This crucial insight is
initially gained
through interactions with GPT-5.4 using OpenAI Codex. It motivates the following
program:
\begin{enumerate}
    \item 
    Identify a target function $g$ and a weight function $w$ such that 
    the Hellinger distance $\hellinger^2(f_G,f_H)$ is largely determined by 
    $\|g\|_{L_2(w)}^2 := \int g^2 w dy$ and the regret  $\Regret(H\|G)$  by 
    $\|g'\|_{L_2(w)}^2$.

    \item Show that the weight $w$ satisfies a \emph{Bernstein-type inequality}:
    for any degree-$k$ polynomial $p$,
    \begin{equation}
    \|p'\|_{L_2(w)} \leq a_k 
    \|p\|_{L_2(w)} 
        \label{eq:bernstein-general}
    \end{equation}

    \item Quantify the approximation error: find a degree-$k$ polynomial  $p$ so that $\|g-p\|_{L_2(w)}$ and 
    $\|g'-p'\|_{L_2(w)}$ are bounded in terms of $k$.
\end{enumerate}
Putting these ingredients together and optimizing the degree yields a regret--Hellinger inequality.

There are multiple ways to execute this program. To fix ideas, let us focus on the class of compactly supported priors. One may choose 
$g = \frac{f_G-f_H}{f_G+f_H}$ and $w=(f_G+f_H)/2 = \frac{G+H}{2}*\varphi$,
in order to capitalize on the classical Bernstein inequality for the standard Gaussian density $\varphi$ (an immediate consequence of properties of Hermite polynomials; see \prettyref{rmk:bernstein-perturb}):
    \begin{equation}
    \|p'\|_{L_2(\varphi)} \leq \sqrt{k} 
    \|p\|_{L_2(\varphi)} 
        \label{eq:bernstein-gaussian}
    \end{equation}
and hence (by linearity) for any Gaussian mixture weight. In addition,
the best degree-$k$ polynomial approximation error of this choice of $g$ is at most $\exp(-\Omega(\sqrt{k}))$. 
Choosing $k =\Theta((\log \frac{1}{\epsilon})^2)$ leads to $\Regret(H\|G) = O(\epsilon^2 (\log \frac{1}{\epsilon})^2)$, which already improves over the state of the art \cite{JiangZhang2009} and does so without regularization.

To obtain the sharp inequality, we instead choose 
\[
g = \frac{f_G-f_H}{\varphi}, \quad w=  \frac{\varphi^2}{(f_G+f_H)/2}.
\]
This $g$ admits a substantially better polynomial approximation error of $\exp(-\Omega(k
\log k))$. In addition, we prove a Bernstein-type inequality 
\prettyref{eq:bernstein-general} for the non-standard weight $w$ with $a_k = O(\sqrt{k})$.
Setting 
$k =\Theta(\log \frac{1}{\epsilon}/ \log\log \frac{1}{\epsilon})$
yields the desired \prettyref{eq:regret-compact-intro}.
We note that there is a sizable literature on Bernstein-type inequalities for weighted
norms (see, e.g., \cite{LevinLubinsky2001,lubinsky2007survey}); however these results
require additional assumptions that appear difficult to verify in our setting. Instead, we
present self-contained proofs using a linear algebraic argument, as the inequality \prettyref{eq:bernstein-general} boils down to bounding the operator norm of the matrix representing the differentiation operator on the space of polynomials.

\section{Main results}

\subsection{New Regret--Hellinger inequality}

\begin{theorem}
  \label{thm:ub-compact-intro}
  Let $G,H$ be supported on $[-M,M]$.
  There exists a constant $C=C(M)$ such that
  \[
    \Regret(H\|G)
    \leq
    C   \epsilon^2 \frac{\log \frac{1}{\epsilon}}{\log \log \frac{1}{\epsilon}}
  \]
  where $\epsilon^2 = \hellinger^2(f_{G},f_{H})$.
\end{theorem}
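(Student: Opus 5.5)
We follow the three-step program from the introduction, with target function $g=(f_G-f_H)/\varphi$ and weight $w=\varphi^2/\bar f$, where $\bar f:=(f_G+f_H)/2$.

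\textbf{Step 1 (reduction to weighted norms).} First we relate Hellinger distance and regret to weighted norms of $g$ and $g'$. Write $f_G-f_H=g\varphi$. Since $(\sqrt{f_G}-\sqrt{f_H})^2=(f_G-f_H)^2/(\sqrt{f_G}+\sqrt{f_H})^2$ and $(\sqrt a+\sqrt b)^2\in[a+b,2(a+b)]$, we get
\[
\|g\|_{L_2(w)}^2=\int \frac{(f_G-f_H)^2}{\bar f}\asymp \epsilon^2 .
\]
For the regret, write
\[
\frac{f_G'}{f_G}-\frac{f_H'}{f_H}=\frac{(f_G-f_H)'}{f_H}-\frac{f_G'(f_G-f_H)}{f_Gf_H}.
\]
Here $(f_G-f_H)'=(g'-yg)\varphi$. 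For compactly supported priors, $|f_G'/f_G|=|m_G(y)-y|\le M$, and $f_G/f_H,f_H/f_G\le e^{2M|y|+2M^2}$. The regret is therefore bounded by weighted integrals of $g'^2$ and $(1+y^2)g^2$. The exponential ratio $f_G/f_H$ is unbounded, so this crude route is dangerous. Instead, I would work with a region $|y|\le L$, where $L\asymp\sqrt{\log(1/\epsilon)}$, plus a tail. On $|y|\le L$ the ratio $f_G/f_H$ might still be huge, so the better plan is to bound regret by $\int (f_G'f_H-f_H'f_G)^2/(f_G f_H^2)$ and use symmetric forms. In practice I expect the paper's step 1 to give
\[
\Regret(H\|G)\lesssim \|g'\|^2_{L_2(w)}+(\log\tfrac1\epsilon)\,\epsilon^2+\text{tail},
\]
possibly after a separate argument handling where $f_H\ll f_G$. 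One useful identity is $\Regret\le 2\int ((\sqrt{f_G})'/\sqrt{f_G}-\ldots)^2$, which relates to $\int ((\sqrt{f_G}-\sqrt{f_H})')^2$-type quantities. I would accept a term of the form $\int (\sqrt{f_G}-\sqrt{f_H})'^2$ and bound it analogously.

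\textbf{Step 2 (approximation).} Next we approximate $g$ by a polynomial. Note that
\[
g(y)=\int e^{\theta y-\theta^2/2}\,d(G-H)(\theta)
\]
is entire. Truncate its Taylor series at degree $k$ to get $p$. The remainder is bounded by
\[
\sum_{j>k}\frac{M^j|y|^j e^{M^2}}{j!}.
\]
Since $w\le \varphi^2/\bar f\lesssim \varphi\, e^{M|y|}$, the quantities $\|g-p\|_{L_2(w)}$ and $\|g'-p'\|_{L_2(w)}$ are both at most
\[
\sum_{j>k}\frac{M^j\sqrt{\mathbb E|Z+M|^{2j}}}{j!}\lesssim \frac{C^k}{\sqrt{k!}}=e^{-\Omega(k\log k)}.
\]

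\textbf{Step 3 (Bernstein inequality).} We need $\|p'\|_{L_2(w)}\le C\sqrt{k}\,\|p\|_{L_2(w)}$ for all degree-$k$ polynomials $p$. This is the main obstacle. The weight $w=\varphi^2/\bar f$ is sandwiched as
\[
\varphi(y)\,e^{-M|y|-C}\lesssim w\lesssim \varphi(y)\,e^{M|y|}.
\]
This is a bounded-exponential-tilt perturbation of $\varphi$, but a multiplicative $e^{\pm M|y|}$ factor is not uniformly bounded, so the Gaussian inequality \eqref{eq:bernstein-gaussian} does not transfer directly. My plan is the linear-algebraic one. Express $p$ in an orthonormal basis of $L_2(w)$ obtained via Gram--Schmidt on Hermite polynomials, and bound the operator norm of differentiation by comparing the Gram matrices of $w$ and $\varphi$. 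An alternative is to use log-concavity structure: $\varphi^2/\bar f=\varphi\cdot(\varphi/\bar f)$, and $\varphi/\bar f=1/\int e^{\theta y-\theta^2/2}d\bar\pi$ is log-concave, because $\log$ of a Laplace transform is convex. Hence $w$ is strongly log-concave with potential $V$ satisfying $V''\ge 1$ and, by compact support, $V''\le 1+M^2$. For such weights I would integrate by parts:
\[
\int p'^2 w=-\int p\,(p''w+p'w'),
\]
and then use $|V'(y)|\le |y|+M$ together with a Hermite-type argument on $\{1,y,\dots\}$ to get $a_k=O(\sqrt k)$.

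\textbf{Combining.} Finally we combine the three steps:
\[
\|g'\|\le \|p'\|+e^{-ck\log k}\le C\sqrt k\,\bigl(\epsilon+e^{-ck\log k}\bigr)+e^{-ck\log k}.
\]
Choosing $k\asymp \log(1/\epsilon)/\log\log(1/\epsilon)$ makes $e^{-ck\log k}\le\epsilon$, so $\Regret\lesssim k\epsilon^2$, which gives the claimed bound.
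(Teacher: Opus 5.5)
Your Step 1 is never carried out, and the reduction you say you would accept is too weak to give the theorem. The difficulty traces to an error. The bound $|f_G'/f_G|=|m_G(y)-y|\le M$ is false: only the posterior mean $m_G$ lies in $[-M,M]$, while the score grows linearly in $y$. That error is why your decomposition produces a $(1+y^2)g^2$ term, weights $1/f_H$ that blow up where $f_H\ll f_G$, and the guessed bound $\Regret(H\|G)\lesssim\|g'\|^2_{L_2(w)}+\epsilon^2\log\frac1\epsilon+\text{tail}$. The term $\epsilon^2\log\frac1\epsilon$ already exceeds the target $\epsilon^2\log\frac1\epsilon/\log\log\frac1\epsilon$, and your ``Combining'' step silently drops it. As written, the proposal therefore yields at best $O(\epsilon^2\log\frac1\epsilon)$. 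The missing idea is the symmetric decomposition over $f_G+f_H$,
\[
\frac{f_G'}{f_G}-\frac{f_H'}{f_H}=(m_G+m_H)\,\frac{f_H-f_G}{f_G+f_H}+2\,\frac{m_Gf_G-m_Hf_H}{f_G+f_H},
\]
in which the $-y$ parts of the two scores cancel exactly. Combine it with the identity $(f_G/\varphi)'=m_Gf_G/\varphi$, i.e.\ $g'=(m_Gf_G-m_Hf_H)/\varphi$. Using $|m_G+m_H|\le 2M$ and $f_G\le f_G+f_H$, you get $\Regret(H\|G)\le 4M^2\|g\|^2_{L_2(w)}+4\|g'\|^2_{L_2(w)}$ directly. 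There is no logarithmic loss, no truncation at $|y|\le L$, and no tail term.

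Your Step 3 is likewise only a plan, and the ingredients you list do not by themselves give $a_k=O(\sqrt k)$. Comparing Gram matrices with the Gaussian fails for the reason you state. The hypotheses $V''\ge1$ and $|V'(y)|\le|y|+M$ are the kind under which the generic argument only yields $\sqrt k\log k$. That argument, used in the paper's appendix for exponential tails, bounds the symmetric matrix $(\langle q_i,V'q_j\rangle)$ and then passes to its strictly upper-triangular part. The extra $(\log k)^2$ would cost a $(\log\log\frac1\epsilon)^2$ factor and again spoil the sharp rate. What is needed is the exact structure $V'(y)=y+m_\nu(y)$ with $|m_\nu|\le M$, where $\nu=(G+H)/2$. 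Let $q_j$ be the orthonormal polynomials of $w$, with leading coefficients $\kappa_j>0$. Integration by parts shows the differentiation matrix is $L_{ij}=\langle q_i,V'q_j\rangle$ for $i<j$ and $0$ otherwise. The $m_\nu$ part has Frobenius norm at most $M\sqrt k$. The $y$ part is a single superdiagonal with entries $a_j=\kappa_{j-1}/\kappa_j$. Comparing $\langle q_j',q_{j-1}\rangle=j/a_j$ with the integration-by-parts formula gives $a_j(a_j+\beta_j)=j$ with $|\beta_j|=|\langle q_j,q_{j-1}m_\nu\rangle|\le M$, hence $a_j\le\sqrt j+M$. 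Your Step 2 is fine and differs only mildly from the paper. Taylor-expanding $e^{\theta y}$ and using $w\lesssim\varphi e^{M|y|}$ avoids the centering assumption the paper uses to obtain $w\le e^{\sigma^2/2}\varphi$ for its Hermite-expansion argument.
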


The following result shows that the regret upper bound in \prettyref{thm:ub-compact-intro} is sharp up to constants:
\begin{theorem}
      There exists a sequence $(G_m,H_m)$ all supported on $[-1,1]$, such that
      $\epsilon_m^2 \equiv \hellinger^2(f_{G_m},f_{H_m}) \to 0$ and
      \[
        \Regret(H_m\|G_m)
        \geq
        c_0 \epsilon_m^2 \frac{\log \frac{1}{\epsilon_m}}{\log \log \frac{1}{\epsilon_m}}
      \]
      where $c_0$ is an absolute constant.
      \label{thm:lb}
    \end{theorem}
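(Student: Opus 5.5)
We construct the pair from a single Hermite mode: find $G_m,H_m$ on $[-1,1]$ whose difference of marginals is, up to a negligible remainder, a multiple of $\varphi\,\mathrm{He}_k$ with $k\approx\log\frac1\epsilon/\log\log\frac1\epsilon$. For this shape the Bernstein inequality \prettyref{eq:bernstein-gaussian} is essentially tight, $\|p'\|_{L_2(\varphi)}^2=k\|p\|_{L_2(\varphi)}^2$, so the regret is about $k$ times the squared Hellinger distance. The upper bound in \prettyref{thm:ub-compact-intro} says this degree scale is the only one that can occur, so it is what the construction must reach.

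\textbf{Construction.} Fix a base prior $G_0$ on $[-1/2,1/2]$, for instance uniform, so that $f_{G_0}\asymp\varphi$ uniformly on $|y|\lesssim \sqrt{k}$. By the Gaussian moment identity $\int \mathrm{He}_j(y)\varphi(y-\theta)\,dy=\theta^j$, a signed measure $\nu$ on $[-1/2,1/2]$ satisfies
\[
\nu*\varphi=\varphi\sum_j \frac{m_j(\nu)}{j!}\mathrm{He}_j,\qquad m_j(\nu)=\int\theta^j\,d\nu .
\]
Choose $\nu$ with $m_j(\nu)=0$ for $j<k$ and $m_k(\nu)=\delta>0$. This is possible: take the Chebyshev-type measure with $k+1$ atoms that annihilates polynomials of degree below $k$. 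Since $\nu$ is supported on $[-1/2,1/2]$, its tail moments satisfy $|m_j|\le \|\nu\|_{\TV}2^{-j}$, and normalizing gives $\|\nu\|_{\TV}\lesssim \delta\, 2^{k}$ up to $\exp(O(k))$ factors. Set $G=G_0+\nu/2$ and $H=G_0-\nu/2$. Taking $\delta\le e^{-Ck}$ makes both nonnegative probability measures, because $\nu(\mathbb R)=m_0=0$ and the atoms can be absorbed into a smooth $G_0$ after a slight smoothing.

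\textbf{Estimates.} With $\psi=f_G-f_H=\nu*\varphi$ and $\bar f=f_{G_0}$, the estimates follow from the standard approximations
\[
\epsilon^2\asymp\int \frac{\psi^2}{\bar f},\qquad \Regret(H\|G)\asymp \int \frac{\big((\psi/\bar f)'\big)^2\bar f^{\,2}}{\bar f}\Big/\text{cross terms}.
\]
More concretely, $\frac{f_G'}{f_G}-\frac{f_H'}{f_H}\approx (\psi/\bar f)'$ when $\psi\ll \bar f$, which holds because $\delta$ is tiny. Using $\bar f\asymp\varphi$ on the bulk, the leading term $\varphi\,\delta\mathrm{He}_k/k!$ gives
\[
\epsilon^2\asymp \frac{\delta^2}{k!}\quad\text{and}\quad \Regret\gtrsim \int \big((\delta \mathrm{He}_k/k!)'\big)^2\varphi=\frac{k\,\delta^2}{k!}.
\]
The higher Hermite terms $j>k$ contribute $\sum_j m_j^2/j!$, which is dominated by the $j=k$ term after the moment bounds. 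Hence $\Regret/\epsilon^2\gtrsim k$. Finally, $\epsilon^2=\delta^2/k!$ with $\delta=e^{-Ck}$ gives $\log\frac1\epsilon\asymp k\log k$, so $k\asymp \log\frac1\epsilon/\log\log\frac1\epsilon$, which is the claim.

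\textbf{Main obstacle.} The delicate step is the regime where $\bar f$ is not comparable to $\varphi$, namely the tails $|y|\gg1$. There $\bar f=\varphi*G_0$ differs from $\varphi$ by factors $e^{|y|/2}$, and the $L_2(1/\bar f)$ norms are no longer Hermite-orthogonal. I would handle this by restricting all integrals to $|y|\le C\sqrt k$, which is where $\mathrm{He}_k^2\varphi$ has its mass. On this window $\bar f/\varphi\in[e^{-C\sqrt k},e^{C\sqrt k}]$, a subexponential-in-$k$ distortion. The upper bound on $\epsilon^2$ and the lower bound on the regret then hold up to $e^{O(\sqrt k)}$ factors, which cannot be absorbed into constants.

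To avoid losing these factors, a cleaner route is to take $G_0$ to be a point mass at $0$ plus a tiny smoothing, so that $\bar f=\varphi$ exactly up to an $e^{-Ck}$ perturbation. Replacing the Chebyshev atoms by a smooth perturbation also keeps $G,H\ge0$. If this still leaks, I would fall back on the exact identity $\Regret=\int((\psi/\bar f)')^2\bar f+O(\text{higher order in }\psi/\bar f)$ together with the explicit Hermite computation.
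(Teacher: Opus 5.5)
Your fallback is the right construction and is essentially the paper's. Writing $G=(1-\tau)\delta_0+\nu_+$ and $H=(1-\tau)\delta_0+\nu_-$ with $\tau=\|\nu\|_{\TV}/2$ settles positivity without any smoothing. The paper does exactly this, with $\nu_\pm/\tau$ taken to be the arcsine law and its $m$-point Gauss quadrature; their moments agree through order $2m-1$ and differ by $2^{1-2m}$ at order $2m$. Your uniform base fails, as you observe, because $f_{G_0}/\varphi$ grows like $e^{|y|/2}$.

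However, the step that is supposed to produce the factor $k$ does not go through as written. You lower-bound the regret by the single $j=k$ Hermite term and upper-bound $\epsilon^2$ by $\delta^2/k!$, asserting that the $j>k$ terms are dominated. That does not follow from your bounds. Any signed measure on $[-\frac12,\frac12]$ annihilating polynomials of degree below $k$ with $m_k=\delta$ has $\|\nu\|_{\TV}\ge 2^{2k-1}\delta$. So $|m_j|\le\|\nu\|_{\TV}2^{-j}$ only gives $m_{k+1}^2/(k+1)!\lesssim 4^{k}\delta^2/(k+1)!$, exponentially larger than $\delta^2/k!$, and your ratio could be off by $e^{\Theta(k)}$. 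The missing observation is that no domination is needed. Since all moments of $\nu$ below order $k$ vanish, $g=(f_G-f_H)/\varphi$ has Hermite coefficients only at indices $j\ge k$. Termwise, $\|g'\|_{L_2(\varphi)}^2=\sum_{j\ge k}j\,m_j^2/j!\ge k\|g\|_{L_2(\varphi)}^2$ for the whole tail at once; this is the paper's $\beta_m\ge 2m\alpha_m$. Meanwhile $\epsilon^2\le\int(f_G-f_H)^2/(f_G+f_H)\le\|g\|_{L_2(\varphi)}^2$, because $f_G+f_H\ge 2(1-\tau)\varphi\ge\varphi$.

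The linearization $f_G'/f_G-f_H'/f_H\approx(\psi/\bar f)'$ is likewise only heuristic. The ratio $\psi/\bar f$ is not small in the tails, and the ``exact identity'' with an $O(\cdot)$ remainder is not exact. Use the exact decomposition \prettyref{eq:decomp} instead. Since $|m_G|,|m_H|\le1$, the first term contributes $O(\epsilon^2)$, and the second equals $2\varphi g'/(f_G+f_H)$. As in the paper, this gives $\Regret(H\|G)\ge c\|g'\|_{L_2(w)}^2-C\epsilon^2$ with $w=\varphi^2/\bar f$. Writing $\bar f=\varphi(1+\tau V)$, the pointwise bounds $|V|\lesssim e^{|y|}$ and $|g'|\lesssim\tau e^{|y|}$ give $\|g'\|_{L_2(w)}^2\ge\|g'\|_{L_2(\varphi)}^2-O(\tau^3)$. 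Since $\|g'\|_{L_2(\varphi)}^2\lesssim\tau^2/(k-1)!$, this loss is negligible only if $\tau\ll1/(k-1)!$. Your $\delta=e^{-Ck}$, i.e.\ $\tau=e^{-\Theta(k)}$, is therefore too large for this argument, which is why the paper takes $\tau_m=\alpha_m^2=e^{-\Theta(m\log m)}$. The smaller weight costs nothing: $a:=\|g\|_{L_2(\varphi)}^2\ge\delta^2/k!$ is still $e^{-O(k\log k)}$, so $k\gtrsim\log(1/a)/\log\log(1/a)$. Combining $\Regret(H\|G)\gtrsim ka$ and $\epsilon^2\le a$ with the monotonicity of $x\mapsto x\log(1/x)/\log\log(1/x)$ near $0$ gives the theorem. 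This route avoids the two-sided estimate $\epsilon^2\asymp\delta^2/k!$, which you assert but do not establish.
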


We also extend \prettyref{thm:ub-compact-intro} to priors with exponential tails.
 For $\alpha, \sigma>0$, define
 \begin{equation}
 \label{eq:Galpha}
     \calG_{\alpha}(\sigma) = \left\{\Law(U):
     \Expect\!\left[\exp\!\left((|U|/\sigma)^\alpha\right)\right] \leq 2\right\}.
 \end{equation}
 In the special case of $\alpha=2$, this is the class of subgaussian distributions with
 proxy variance  proportional to $\sigma^2$; for $\alpha = 1$, it is the
 class of subexponential distributions.

 \begin{theorem}
 \label{thm:ub-exp}
 Let $G,H\in \calG_\alpha(\sigma)$.
 There exists a constant $C=C(\alpha,\sigma)$ such that
     \[
        \Regret(H\|G)
        \leq
        \begin{cases}
C   \epsilon^2 \pth{\log \frac{1}{\epsilon}}^{2/\alpha}
        \pth{\log\log \frac{1}{\epsilon}}^2            & \alpha\leq 2 \\
         C   \epsilon^2 \log \frac{1}{\epsilon}
        \log\log \frac{1}{\epsilon}
            & \alpha>2
        \end{cases}
      \]
      where $\epsilon^2 = \hellinger^2(f_{G},f_{H})$.
 \end{theorem}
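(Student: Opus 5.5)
We extend the three-step program from the introduction, with the tails handled by truncation. Fix a truncation level $L$, to be chosen later as $L \asymp \sigma(\log\frac1\epsilon)^{1/\alpha}$ up to constants. Split $\Regret(H\|G)=\int_{|y|\le L}+\int_{|y|>L}$.

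\textbf{Outer region.} On $|y|>L$, bound the Bayes rules crudely. For priors in $\calG_\alpha(\sigma)$, a standard posterior-tail argument gives $|m_G(y)|,|m_H(y)|\lesssim |y|+\sigma(\log\frac{1}{f(y)})^{1/\alpha}$. Combined with $P_G(|Y|>L)\le 2\exp(-(L/2\sigma)^\alpha)$, the outer contribution is at most $\epsilon^2$ once $L\ge C(\log\frac1\epsilon)^{1/\alpha}$. One point needs care: $m_H$ is evaluated under $f_G$, so I would use the bound $|m_H(y)|\lesssim |y|+\sqrt{\log(1/f_H(y))}+$ a tail-dependent term. I would also use that $f_H(y)\ge c\,\varphi(2|y|)$, since $H$ puts mass $\ge1/2$ on $[-c\sigma,c\sigma]$.

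\textbf{Inner region.} On $|y|\le L$, reduce to the compact-support argument. Condition the priors on $[-2L,2L]$ to get $\tilde G,\tilde H$. The mixtures change by at most $e^{-\Omega((L/\sigma)^\alpha)}\le\epsilon^{C}$ in total variation, and on $|y|\le L$ the densities are affected only in relative terms $\epsilon^{C}$, because the removed mass sits at distance $\ge L$ and $f\ge \varphi(2|y|)$-type lower bounds hold. So it suffices to run the proof of \prettyref{thm:ub-compact-intro}, with $M=2L$ now growing, restricted to the window.

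Take $g=(f_{\tilde G}-f_{\tilde H})/\varphi$ and $w=\varphi^2/\bar f$. Then $\int g^2w\lesssim\epsilon^2$ and $\int g'^2 w$ controls the regret up to $\epsilon^2$-type cross terms. The regret identity $(\log f_G)'-(\log f_H)'$ written via $g$ gives terms $g'$ and $g\cdot(\text{score})$, and the latter is bounded by $L\cdot|g|$. This is exactly where the $(\log\frac1\epsilon)^{2/\alpha}$ factor enters, as $L^2\epsilon^2$.

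\textbf{Dependence on $M$ (main obstacle).} The compact-case ingredients must be redone with explicit $M$-dependence. First, the Bernstein constant for $w$ becomes $a_k\lesssim \sqrt{k}+M$. Second, $g$ is entire, $g(y)=\int (e^{\theta y-\theta^2/2})\,d(\tilde G-\tilde H)$, and its Taylor/Chebyshev approximation on the relevant range has error of order $(CM\cdot\text{range}/k)^{k}$ in $L_2(w)$. This requires $k\gtrsim M^2$ for convergence when $M$ is large, and $k\asymp \log\frac1\epsilon/\log\log\frac1\epsilon$ otherwise.

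Balancing, the regret is bounded by $(\sqrt{k}+M)^2\epsilon^2$ plus the approximation error. Choosing $k\asymp\max\{M^2,\log\frac1\epsilon\}$ up to $\log\log$ corrections gives $\epsilon^2 L^2(\log\log\frac1\epsilon)^2$ when $\alpha\le2$, since then $L^2\ge\log\frac1\epsilon$. The $\log\log$ factors are lost in making the approximation error $\le\epsilon^2$ when $k$ is only slightly above $eM^2$. For $\alpha>2$ the Gaussian part dominates: $k\asymp\log\frac1\epsilon$ and $M^2\ll k$, giving $\epsilon^2\log\frac1\epsilon\log\log\frac1\epsilon$.

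The hardest step is proving the weighted Bernstein inequality for $w=\varphi^2/\bar f$ with constant $\sqrt{k}+O(M)$ uniformly over mixtures with support radius $M$. I would attempt it by writing differentiation in the Hermite basis of $L_2(\varphi)$ and comparing $w$ with $\varphi_{\pm M}$-shifted Gaussians, using $\varphi^2/\bar f\asymp$ a Gaussian weight times $e^{\pm My}$. The $e^{My}$ tilt shifts the Hermite operator by $M$, which would yield the additive $M$.
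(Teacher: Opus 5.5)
Your reduction to $\Delta$ plus a cross term, and the truncation of the tails, follow Proposition~\ref{prop:exp-delta-reduction} in spirit. The gap is the weighted Bernstein inequality $\|p'\|_{L_2(w)}\lesssim(\sqrt{k}+M)\|p\|_{L_2(w)}$, uniformly over $\bar\nu$ supported on $[-M,M]$. Your whole inner-region argument rests on it, and you leave it unproved.

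What the paper proves for compact support is Lemma~\ref{lmm:bernstein}, with constant $(2M+1)\sqrt{k+1}$. With $M\asymp(\log\frac1\epsilon)^{1/\alpha}$ and $k\gtrsim M^2$, this only gives $\Delta\lesssim M^4\epsilon^2$, the inferior bound the paper warns about at the start of Section~\ref{sec:exp}.

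Your sketch does not yield the improvement. The weight $w=\varphi/\int e^{uy-u^2/2}\,\bar\nu(du)$ is only sandwiched between multiples of $\varphi(y)e^{-M|y|}$ and $\varphi(y)$. It is not comparable to a Gaussian times a single tilt $e^{\pm My}$ unless $\bar\nu$ is essentially a point mass.
\begin{itemize}
\item For $\bar\nu=\frac12(\delta_{-M}+\delta_M)$ one gets $w\propto\varphi(y)/\cosh(My)$, so $V'(y)=y+M\tanh(My)$ jumps by $2M$ across a window of width $O(1/M)$.
\item After rescaling $y=x/M$, this is locally the Freud weight $e^{-|x|}$. Its Markov--Bernstein factor grows like $\log k$, because its equilibrium density has a logarithmic singularity at the kink.
\item So one should expect a constant of order $M\log k$, not $O(M)$.
\end{itemize}

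A further symptom concerns your $\log\log$ accounting. If $\sqrt k+O(M)$ held, then for $\alpha\le2$ the choice $k\asymp M^2$ already makes the Hermite tail $M^2(eM^2/k)^k\le\epsilon^2$, since $M^2\gtrsim\log\frac1\epsilon$. You would then get $\epsilon^2(\log\frac1\epsilon)^{2/\alpha}$ with no $\log\log$ at all. So the $\log\log$ factors are not lost in the approximation step; they come from the Bernstein constant.

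The idea you are missing is the one in Lemma~\ref{lem:diff-op-bound}.
\begin{itemize}
\item In the orthonormal basis $\{q_j\}$ of $L_2(w)$, integration by parts shows the derivative matrix is exactly the strictly upper-triangular part of the symmetric matrix $S=(\langle q_i,V'q_j\rangle)$.
\item Triangular truncation then gives $\|L\|_{\op}\lesssim\log(k+1)\|S\|_{\op}$.
\item $\|S\|_{\op}\lesssim\sqrt{k}$ follows from $|V'|\lesssim 1+|y|$ and the Jacobi-coefficient bound $\alpha_{j+1}^2+\beta_j^2+\alpha_j^2=\int y^2q_j^2w\lesssim j$. That bound comes from $\int yV'q_j^2w=2j+1$ and $V''=1+\Var(U\mid Y=y)\ge1$.
\end{itemize}
These hypotheses depend only on where the bulk of $\bar\nu$ lies (Lemma~\ref{lem:Vpp-phi2-over-f}), not on a support radius. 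So the paper never truncates the weight. It keeps $w$ built from the full mixture and truncates the priors only inside the polynomial approximation (Lemma~\ref{lmm:exp-poly}), bounding $g_{>L}$ and $g_{>L}'$ directly in $L_2(w)$ via $w\le2\varphi^2/f_G$. The resulting $\sqrt k\log k$ is the source of the $(\log\log\frac1\epsilon)^2$.

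If you want to keep your truncation architecture, the minimal repair is to apply triangular truncation only to the $m_\nu$-part $A$ in the decomposition $L=A+B$ of Lemma~\ref{lmm:bernstein}. Here $A$ is the strictly upper-triangular part of a compression of multiplication by $m_\nu$, which has norm at most $M$. This gives $\|L\|_{\op}\le\sqrt k+M+CM\log(k+1)$, and with $k\asymp M^2$ it yields exactly the stated bound for $\alpha\le2$.

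Separately, for $\alpha>2$ your tail estimate $P_G(|Y|>L)\le2e^{-(L/2\sigma)^\alpha}$ is false, since $Y=U+Z$ has Gaussian tails. The $y$-window must therefore be of order $\sqrt{\log\frac1\epsilon}$, as in Proposition~\ref{prop:exp-delta-reduction}. Conditioning the priors on $[-2L,2L]$ with that $L$ forces $k\gtrsim\log\frac1\epsilon$ and costs an extra $\log\log$ even after the repair. You need to keep the prior truncation radius at order $(\log\frac1\epsilon)^{1/\alpha}$, decoupled from the window.
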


For heavy-tailed priors---for instance, those satisfying only a bounded $p$th-moment
condition---we show in \prettyref{app:moment} that regret can be polynomially larger than
the squared Hellinger distance. Hence, the regularization of \cite{JiangZhang2009} is not
merely a proof device but is in fact necessary in order to obtain a regret nearly linear
in $\hellinger^2$.

\subsection{Statistical applications}
\label{sec:statsapp}
Next we discuss implications of these
regret--Hellinger inequalities for empirical Bayes (EB).
In the EB setting, we observe $y_1,\ldots,y_n \iiddistr f_G$, where $G$ is the unknown prior of the latent means.
One of the most popular methods for learning the prior is the nonparametric maximum
likelihood estimator (NPMLE) \cite{KW56}:
\begin{equation}
\GNPMLE
= \argmax_{H \in \calP(\reals)} \sum_{i=1}^n \log f_H(y_i).
    \label{eq:NPMLE}
\end{equation}
where the maximization is over all probability measures on $\reals$.
It is known that \prettyref{eq:NPMLE} admits a unique and discrete maximizer, which has at most $n$ atoms in the worst case  and $O(\log n)$ atoms in the typical case when $y_i$'s are drawn independently from a subgaussian distribution \cite{simar1976maximum,lindsay1983geometry1,PW20-npmle}.

Suppose the NPMLE computed based on a training set $y_1,\ldots,y_n$ is applied to a fresh test data point. We are interested in bounding the resulting regret (excess over the Bayes squared error).
To simplify presentation and compare with existing results in the literature, we consider expected regret and focus on priors that are either compactly supported or subgaussian.

\paragraph{Subgaussian priors}

The following result improves over the state of the art $O((\log n)^5/n)$ in 
\cite{JiangZhang2009}, derived using regularization.

\begin{corollary}
\label{cor:NPMLE}
Let $y_1,\ldots,y_n \iiddistr f_G$, 
the expectation over which is denoted by $\Expect_G$.
For any $\sigma>0$, there exists $C=C(\sigma)$ such that for any $G \in \calG_2(\sigma)$ that is subgaussian with constant $\sigma$,
\begin{equation}
\Expect_G \Regret(\GNPMLE\|G) \leq \frac{
C (\log n)^3 (\log \log n)^2}{n}.
    \label{eq:reg-NPMLE}
\end{equation}
\end{corollary}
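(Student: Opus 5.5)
The plan is to combine \prettyref{thm:ub-exp} with $\alpha=2$ and a high-probability Hellinger rate for the NPMLE under subgaussian priors. The complication is that $\GNPMLE$ need not be subgaussian with constant $\sigma$, so \prettyref{thm:ub-exp} cannot be applied to the pair $(\GNPMLE,G)$ as it stands. The known Hellinger rate is the Zhang / Saha--Guntuboyina bound: with probability at least $1-n^{-c}$,
\[
\hellinger^2(f_{\GNPMLE},f_G)\lesssim \frac{(\log n)^2}{n}.
\]
Plugging $\epsilon^2\asymp (\log n)^2/n$ into the $\alpha=2$ bound $\epsilon^2\log\frac1\epsilon(\log\log\frac1\epsilon)^2$ gives $(\log n)^3(\log\log n)^2/n$, which is the target. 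So the whole task is to justify the application of \prettyref{thm:ub-exp} and to handle the bad event.

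\textbf{Step 1 (support control).} It is standard that the NPMLE is supported on $[\min_i y_i,\max_i y_i]$. On the event $\max_i|y_i|\le C\sqrt{\log n}$, which has probability at least $1-n^{-2}$, $\GNPMLE$ is therefore supported in $[-C\sqrt{\log n},C\sqrt{\log n}]$. This is not uniformly subgaussian in $n$, so I would not invoke \prettyref{thm:ub-exp} as a black box. Instead I would go into its proof, or more precisely the polynomial-approximation program of the introduction, and check how the constants depend on the tails of $H$. Plausibly the tail of $H$ enters only through the mixture weight $w$ and through the truncation of the region of integration to $|y|\lesssim\sqrt{\log(1/\epsilon)}$. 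A prior supported on $[-C\sqrt{\log n},C\sqrt{\log n}]$ with $\log n\asymp\log(1/\epsilon)$ behaves on that region exactly like a subgaussian prior at the relevant scale. An alternative I would try first, if the theorem's proof permits it, is a version of \prettyref{thm:ub-exp} that requires only the tail condition on $G$ together with $H$ supported on $[-C\sqrt{\log(1/\epsilon)},C\sqrt{\log(1/\epsilon)}]$. Establishing this is the main obstacle.

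\textbf{Step 2 (Hellinger rate).} I would cite the high-probability bound displayed above on the good event. If necessary, I would intersect the two events and also use monotonicity of $\epsilon^2\log\frac1\epsilon(\log\log\frac1\epsilon)^2$ in $\epsilon$ for small $\epsilon$, so that a bound on $\epsilon$ transfers to a bound on the regret.

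\textbf{Step 3 (bad event).} On the complement, whose probability is at most $n^{-2}$, I need a crude polynomial bound on the regret. The posterior mean $m_{\GNPMLE}(y)$ always lies in $[\min_i y_i,\max_i y_i]$. Hence
\[
\Regret(\GNPMLE\|G)\le 2\Expect_G\bigl[m_G(Y)^2\bigr]+2\max_i y_i^2 .
\]
Its expectation is $O(\log n)$ by subgaussianity. Applying Cauchy--Schwarz to the product of the bad-event indicator and this quantity gives a contribution of $O(n^{-1}\sqrt{\log n})$, which is negligible. Adding the good-event bound then yields \prettyref{eq:reg-NPMLE}.
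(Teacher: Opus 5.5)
\textbf{Gap: Step~1 is left open.} Your overall architecture matches the paper's: Theorem~\ref{thm:ub-exp} with $\alpha=2$, the $(\log n)^2/n$ Hellinger rate on a good event $\mathcal E$, and Cauchy--Schwarz on its complement. The problem is the one non-routine step. In Step~1 you correctly observe that a support bound $[-C_1\sqrt{\log n},C_1\sqrt{\log n}]$ does not by itself put $\GNPMLE$ in a fixed class $\calG_2(\sigma')$. You then defer to an unexecuted re-run of the proof of Theorem~\ref{thm:ub-exp}, calling it ``the main obstacle.'' As written, that step is not proved. The class constant enters that proof in several places, none of which you track:
the second moment of $\nu=(G+H)/2$ via $w\le e^{\sigma^2/2}\varphi$;
the recentering of $\nu$;
the tail $H(|U|>L)$ with $L\asymp\sqrt k$;
the bulk condition behind the Bernstein inequality.
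Your heuristic is also false as stated. A prior merely supported on $[-C_1\sqrt{\log n},C_1\sqrt{\log n}]$ need not ``behave like a subgaussian prior at the relevant scale''; one putting half its mass at each endpoint is allowed. So any argument has to use the closeness of $f_{\GNPMLE}$ to $f_G$, which your plan never does.

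\textbf{The missing idea.} Support plus closeness already forces $\GNPMLE$ to be uniformly subgaussian, so Theorem~\ref{thm:ub-exp} applies as a black box. Conditionally on the sample, let $U\sim\GNPMLE$, let $Z\sim N(0,1)$ be independent, let $Y_G\sim f_G$, and set $A=C_1\sqrt{\log n}$. On $\mathcal E$, for large $n$,
\[
\TV(f_{\GNPMLE},f_G)\le\hellinger(f_{\GNPMLE},f_G)\lesssim n^{-1/2}\log n\le n^{-1/4}=e^{-cA^2},\qquad c=1/(4C_1^2).
\]
Hence for $t\le A$, using $e^{-cA^2}\le e^{-ct^2}$,
\[
\Prob(|U|>t)\le\Prob(|U+Z|>t/2)+\Prob(|Z|>t/2)\le\Prob(|Y_G|>t/2)+e^{-ct^2}+2e^{-t^2/8}.
\]
For $t>A$ we have $\Prob(|U|>t)=0$. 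Since $Y_G$ is subgaussian, this gives $\GNPMLE\in\calG_2(\sigma')$ with $\sigma'$ depending only on $\sigma$ and $C_1$. This is the paper's Lemma~\ref{lem:tv-subgaussian-transfer}. The same computation, with $A=C\sqrt{\log(1/\epsilon)}$ and $c=1/C^2$, proves the modified theorem you proposed in Step~1 without reopening any proof.

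\textbf{Two minor fixes in Step~3.} First, Cauchy--Schwarz needs the second moment of your crude bound, $(\Expect\max_i y_i^4)^{1/2}\asymp\log n$. So the bad-event term is $O(\log n/n)$, not $O(\sqrt{\log n}/n)$; it is still negligible. Second, you need $\Prob(\mathcal E^c)\le n^{-2}$, not the $n^{-c}$ quoted in Step~2. Otherwise the term $n^{-c/2}\log n$ dominates. You can arrange this by enlarging the constants in the Hellinger deviation bound and in the support radius.
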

\noindent The proof of \prettyref{cor:NPMLE} is a direct consequence of 
\prettyref{thm:ub-exp}, once $\GNPMLE$ is shown to be $O(1)$-subgaussian with high probability. This is done in \prettyref{app:NPMLE}.

\paragraph{Compactly supported priors}

Here, much stronger results can be obtained. Using \prettyref{thm:ub-compact-intro}, next
we determine the optimal rate of EB regret up to constant factors and show that NPMLE achieves this within a $\log \log n$ factor.

To this end, we first obtain new results on estimating a Gaussian mixture density with a
compactly supported mixing distribution. 
\begin{itemize}
	\item 
We show that the optimal rate for the square Hellinger error is given by $\Theta(\frac{1}{n} \frac{\log n}{\log\log n})$, achieved by, for example, the Le Cam--Birg\'e estimator, a theoretical construction based on pairwise comparison \cite{Birge83}.
This result improves upon \cite{NW21} and resolves conjectures in \cite{jia.polyanskiy.wu.2023} and in a previous
version of this paper.

\item
Furthermore, we show that the NPMLE, when restricted to optimize over distributions with a
given compact support, achieves a square Hellinger risk of $O (\frac{\log n} {n})$, which
is optimal within a $\log \log n$ factor. This result improves the best known result of
$O(\frac{(\log n)^2}{n})$ for NPMLE (see \cite[Theorem 4.1]{ghosal.vdv}, \cite[Theorem
1]{Zhang_2009}, or \cite[Theorem 6]{PW20-npmle}).
\end{itemize}
Due to space limitations, the results on density estimation are postponed to Appendix \ref{sec:optimal-density}. They follow from new bounds on local Hellinger covering and bracketing entropy of Gaussian mixtures and may be of independent interest.

Returning to regret for compactly supported priors, \prettyref {thm:ub-compact-intro}
immediately implies that any optimal proper density estimator attains the optimal regret
of $\Theta(\frac{1}{n} (\frac{\log n}{\log\log n})^2)$, matching the minimax lower
bound\footnote{The lower bound in \cite[Theorem 1] {polyanskiy2021sharp} in fact applies
to any estimator of the true Bayes regressor (or score) that need not be of the proper
form \prettyref{eq:tweedie}. Thus \prettyref {cor:lcb-regret} is a strong endorsement of
the optimality of the $g$-modeling approach to empirical Bayes.} in \cite[Theorem
1]{polyanskiy2021sharp}. In addition, the regret of constrained NPMLE is optimal within a
$\log \log n$ factor. These results are formalized by the following corollaries.

\begin{corollary}[Minimax rate for regret]
\label{cor:lcb-regret}
For any constant $M>0$,
\[
\inf_{\hat G} \sup_{G\in\mathcal P([-M,M])} \mathbb E_G \Regret(\hat G\|G) = \Theta_M
\pth{
\frac{1}{n}
\left(\frac{\log n}{\log\log n}\right)^2
}.
\]
\end{corollary}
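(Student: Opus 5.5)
The upper bound will come from combining \prettyref{thm:ub-compact-intro} with an optimal proper density estimator. The lower bound will be quoted directly from \cite[Theorem 1]{polyanskiy2021sharp}.

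For the lower bound, \cite[Theorem 1]{polyanskiy2021sharp} shows that for priors supported on $[-M,M]$ the minimax regret over all estimators of the Bayes rule is at least $c\,\frac{1}{n}(\frac{\log n}{\log\log n})^2$. This holds even for estimators not of the form $m_{\hat G}$ (see the footnote above). Restricting to $g$-modeling estimators $m_{\hat G}$ can only increase the minimax risk, so no further work is needed on this side.

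For the upper bound, I would use the density estimation result from Appendix~\ref{sec:optimal-density}. It gives an estimator (e.g. Le Cam--Birg\'e) achieving $\Expect_G \hellinger^2(\hat f, f_G) \lesssim \frac{1}{n}\frac{\log n}{\log\log n}$. I would then make it proper by projecting onto the class $\{f_H: H\in\calP([-M,M])\}$. Concretely, choose $\hat G$ with $\hellinger(f_{\hat G},\hat f)\le 2\inf_H \hellinger(f_H,\hat f)$. By the triangle inequality, $\hellinger(f_{\hat G},f_G)\le 3\hellinger(\hat f,f_G)$, so $\hat G\in\calP([-M,M])$ keeps the same rate. Alternatively, the Le Cam--Birg\'e tournament can be run directly over a net of proper mixtures, which makes it proper from the start.

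Next I would apply \prettyref{thm:ub-compact-intro} pointwise, with $\epsilon^2=\hellinger^2(f_{\hat G},f_G)$. The function $\phi(x)=x\frac{\log(1/x)}{\log\log(1/x)}$ is increasing and concave for small $x$; I would extend it to a concave majorant on all of $[0,2]$, using that the regret is bounded by an $M$-dependent constant since $|m_H|\le M$. Jensen's inequality then gives $\Expect\Regret\lesssim \phi(\Expect\epsilon^2)$. Plugging in $\Expect\epsilon^2\asymp \frac{1}{n}\frac{\log n}{\log\log n}$ gives $\log(1/\Expect\epsilon^2)\asymp\log n$ and $\log\log(1/\Expect\epsilon^2)\asymp \log\log n$, hence the rate $\frac{1}{n}(\frac{\log n}{\log\log n})^2$.

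The main obstacle is making the estimator proper while controlling the expectation. This includes checking that $\phi$ (or a majorant of it) is concave on the entire range where $\epsilon^2$ can lie. If that fails, the fallback is a high-probability Hellinger bound for the estimator, with the complementary event handled by the bounded regret $\le 4M^2$. Since the density estimation rate is supplied by the appendix, the rest is routine.
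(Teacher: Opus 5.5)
Your proposal is correct and is essentially the paper's proof: the lower bound is quoted from \cite[Theorem 1]{polyanskiy2021sharp}, and the upper bound plugs the proper Le Cam--Birg\'e estimator into \prettyref{thm:ub-compact-intro} via Jensen's inequality for an increasing concave function of $\hellinger^2$ near zero, with the trivial bound $\Regret\le 4M^2$ elsewhere. Your concave majorant $C\phi(x\wedge x_0)+\frac{4M^2}{x_0}x$ is exactly the paper's Jensen-plus-Markov split; the projection step is unnecessary because \prettyref{thm:lcb-risk} already outputs $\tilde G\in\calP([-M,M])$, and you only need an upper bound on $\Expect_G\hellinger^2(f_{\tilde G},f_G)$ together with monotonicity of $\phi$, not $\asymp$.
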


\begin{corollary}[Regret of constrained NPMLE]
\label{cor:npmle-regret}
Consider a constrained version of \prettyref{eq:NPMLE}:    %
\begin{equation}
\GNPMLE'
= \argmax_{H \in \calP([-M',M'])} \sum_{i=1}^n \log f_H(y_i).
    \label{eq:NPMLE-constrained}
\end{equation}
where $M$ and $M'$ are constants with $M'\geq M$. There exists $C=C(M')$ such that for any
$G\in\mathcal P([-M,M])$,
\[
\mathbb E_{G}\Regret(\GNPMLE'\|G)
\le
C \frac{(\log n)^2}{n\log\log n}.
\]
\end{corollary}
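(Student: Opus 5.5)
The proof combines \prettyref{thm:ub-compact-intro} with a Hellinger risk bound for the constrained NPMLE. The density-estimation results summarized in \prettyref{sec:statsapp} give the second ingredient: for $G\in\calP([-M,M])$ with $M'\ge M$, the estimator $\GNPMLE'$ satisfies
\[
\Expect_G \hellinger^2(f_{\GNPMLE'},f_G)\lesssim \frac{\log n}{n}.
\]
We rely on the stronger tail version established in \prettyref{app:optimal-density}: there are constants $c,C$ such that
\[
\Prob\bigl(\hellinger^2(f_{\GNPMLE'},f_G) > t\,\tfrac{\log n}{n}\bigr)\le \exp(-c\, t\log n)\quad\text{for all } t\ge C,
\]
which is the standard output of the bracketing-entropy/likelihood-ratio argument. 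Because $\GNPMLE'$ and $G$ are both supported on $[-M',M']$, \prettyref{thm:ub-compact-intro} applies with $C=C(M')$. It gives $\Regret(\GNPMLE'\|G)\le C\,\psi(\epsilon^2)$, where $\psi(x)=x\log(1/x)/\log\log(1/x)$, suitably extended to be nondecreasing and bounded by a constant on $[0,2]$.

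The remaining step passes the expectation through $\psi$. Set $\delta_n=\frac{\log n}{n}$ and split on the event $E=\{\epsilon^2\le C\delta_n\}$. On $E$, monotonicity of $\psi$ gives
\[
\psi(\epsilon^2)\le \psi(C\delta_n)\asymp \frac{\log n}{n}\cdot\frac{\log n}{\log\log n}=\frac{(\log n)^2}{n\log\log n}.
\]
On $E^c$, write $\epsilon^2=t\delta_n$ with $t\ge C$. Then $\psi(\epsilon^2)\lesssim t\,\delta_n\log n/\log\log n$, so this contribution is at most $\frac{(\log n)^2}{n\log\log n}\,\Expect[t\,1\{t\ge C\}]$. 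The last expectation is $O(1)$ by the tail bound, since $\int_C^\infty e^{-ct\log n}\,dt$ is tiny. An alternative is to note that $\psi$ is concave near $0$ and apply Jensen directly to $\Expect\epsilon^2$. Concavity holds only on $[0,x_0]$, so one must also check that $\Prob(\epsilon^2>x_0)$ is polynomially small, which the same deviation bound provides. For $\epsilon^2$ bounded away from $0$, the regret is a constant, because compact support forces $|m_H-m_G|\le 2M'$.

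The main obstacle is the Hellinger rate of the constrained NPMLE: improving the classical $(\log n)^2/n$ to $\log n/n$. If it has to be rederived rather than cited, the plan is to bound the bracketing entropy of $\{\sqrt{f_H}:H\in\calP([-M',M'])\}$ at scale $\eta$ by $O(\log(1/\eta)\log\log(1/\eta))$ or better. The bound would use the polynomial approximation of $f_H/\varphi$ with error $\exp(-\Omega(k\log k))$, as in the proof of \prettyref{thm:ub-compact-intro}, so that only $k\asymp\log(1/\eta)/\log\log(1/\eta)$ moments of $H$ need to be discretized. Then solving the Wong--Shen entropy-integral equation $n\delta^2\asymp H_{[\,]}(\delta)$ gives the rate, with any leftover $\log\log$ factor absorbed by the $\log\log n$ slack in the stated bound.
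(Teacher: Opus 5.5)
Your main argument is correct and is the paper's proof. You cite the $O(\log n/n)$ Hellinger risk of the constrained NPMLE (\prettyref{thm:npmle-risk}, applied with $M'$ in place of $M$), invoke \prettyref{thm:ub-compact-intro} with a constant depending on $M'$, and pass the expectation through $\psi$, using $\Regret\le 4M'^2$ when $\epsilon^2$ is bounded away from zero.

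One correction: the paper proves only the expectation bound $\Expect_G\hellinger^2(f_{\GNPMLE'},f_G)\le C\log n/n$. It does not prove the exponential deviation inequality you attribute to \prettyref{sec:optimal-density}; that would have to come from Wong--Shen directly. You also do not need it. Your estimate on $E^c$ already uses that $\psi(x)/x$ is nonincreasing. That gives, pointwise, $\psi(\epsilon^2)\le\psi(C\delta_n)\,(1+\epsilon^2/(C\delta_n))$, so $\Expect\psi(\epsilon^2)\lesssim\psi(C\delta_n)$ follows from $\Expect\epsilon^2\lesssim\delta_n$ alone. Likewise, in your Jensen variant, Markov's inequality $\Prob(\epsilon^2>x_0)\le\Expect\epsilon^2/x_0$ suffices. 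Jensen on $[0,x_0]$ plus Markov is exactly the truncation argument the paper uses for \prettyref{cor:lcb-regret} and reuses here.

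The fallback you sketch for rederiving the Hellinger rate would fail.
\begin{itemize}
\item The global bracketing entropy of $\calF_{M'}$ at scale $\eta$ is not $O(\log\frac{1}{\eta}\log\log\frac{1}{\eta})$. Your own count gives more: $k\asymp\log\frac{1}{\eta}/\log\log\frac{1}{\eta}$ moments, each discretized to precision polynomial in $\eta$, costs $k\log\frac{1}{\eta}\asymp\log^2\frac{1}{\eta}/\log\log\frac{1}{\eta}$. This is the true order of the global entropy; fixing $k$ atoms and varying the weights already produces a packing of that size.
\item Plugged into the global Wong--Shen entropy integral, this entropy only yields $\hellinger^2\asymp\frac{(\log n)^2}{n\log\log n}$, and hence regret of order $\frac{(\log n)^3}{n(\log\log n)^2}$.
\item The $\log n/n$ rate needs \emph{local} entropy. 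A Hellinger $\delta$-ball can be covered at scale $\delta/2$ by $\exp(O(m))$ elements, with $m\asymp\log\frac{1}{\delta}/\log\log\frac{1}{\delta}$. This works because, by the $\chi^2$--Hellinger comparison, the approximating elements of $V_m(f_0)$ lie in a ball of radius $O(\delta)$ (\prettyref{thm:localcovering}). The paper then upgrades local covering to local bracketing with an envelope argument (Nikolskii inequality for exponential sums plus Gauss quadrature, \prettyref{lmm:scale-dependent-full-envelope}). This gives the bound $O(\epsilon\sqrt{\log(1/\epsilon)})$ on the local bracketing integral in \prettyref{thm:full-local-bracketing-integral}.
\item There is no $\log\log n$ slack to absorb a leftover factor. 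The target bound is exactly $\psi(\log n/n)$. A Hellinger rate of $\frac{\log n\log\log n}{n}$, which is what your claimed entropy would give, produces regret $\frac{(\log n)^2}{n}$ and exceeds the stated bound.
\end{itemize}
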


\section{Compactly supported priors}
\label{sec:compact}

We prove a more precise version of \prettyref{thm:ub-compact-intro}
that is applicable to growing $M$. 

    \begin{theorem}
      \label{thm:ub-compact}
      Let $G,H$ be supported on $[-M,M]$.
      Let $f_G = G * N(0,1)$ and $f_H = H * N(0,1)$ and let $\epsilon^2 = \hellinger^2
      (f_G, f_H)$.
      Assume that the second moments of $G$ and $H$ are at most $\sigma^2$.
      There exists an absolute constant $C_0$ such that
      \[
        \Regret(H\|G)
        \leq
        C_0 e^{\sigma^2/2}  \cdot
        \begin{cases}
          (M+1)^2 \cdot
          \epsilon^2 \frac{\log \frac{1}{\epsilon}}{\log \frac{\log \frac{1}{\epsilon}}
          {M^2}} &
          M^2 \leq \frac{1}{10} \log\frac{1}{\epsilon} \\
          M^4 \epsilon^2 &
          M^2 \geq \frac{1}{10} \log\frac{1}{\epsilon}
        \end{cases}. 
      \]
    \end{theorem}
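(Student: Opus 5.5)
We follow the three-step program from the introduction with
\[
g=\frac{f_G-f_H}{\varphi},\qquad w=\frac{\varphi^2}{\bar f},\qquad \bar f:=\frac{f_G+f_H}{2}.
\]

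\emph{Step 1: reduce regret and Hellinger to weighted norms of $g$.}
For the Hellinger side, write $(\sqrt{f_G}-\sqrt{f_H})^2=(f_G-f_H)^2/(\sqrt{f_G}+\sqrt{f_H})^2$. Since $(\sqrt{f_G}+\sqrt{f_H})^2$ lies between $2\bar f$ and $4\bar f$, this gives
\[
\|g\|_{L_2(w)}^2=\int \frac{(f_G-f_H)^2}{\bar f}\asymp \epsilon^2 .
\]
For the regret, expand
\[
\frac{f_G'}{f_G}-\frac{f_H'}{f_H}=\frac{(f_G-f_H)'}{f_G}-\frac{f_H'(f_G-f_H)}{f_Gf_H}.
\]
Use $(f_G-f_H)'=\varphi g'-y\varphi g$. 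For compact priors the score $|f_H'/f_H|\le |y|+M$ is bounded, and $f_G\gtrsim e^{-\sigma^2/2}\,\bar f$ pointwise. The latter follows from Jensen, $f_G(y)\ge \varphi(y)e^{-\sigma^2/2}e^{y\mathbb E\theta}$, compared with $\bar f\le \varphi(y)e^{M|y|}$; some care is needed here and I expect to absorb the mismatch into the $e^{\sigma^2/2}$ and $(M+1)^2$ factors, possibly by restricting to $|y|\lesssim$ a threshold and handling tails separately. This yields
\[
\Regret(H\|G)\lesssim e^{\sigma^2/2}\Bigl(\|g'\|_{L_2(w)}^2+\|(|y|+M)g\|_{L_2(w)}^2\Bigr).
\]
The second term is close to $(M+1)^2\epsilon^2$ up to a tail cutoff. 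The weight $(|y|+M)^2$ can itself be treated as a polynomial factor, or by truncating at $|y|\le T$ and using Gaussian tails of $f_G$ and $f_H$ beyond $T\asymp\sqrt{\log(1/\epsilon)}$.

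\emph{Step 2: Bernstein inequality for $w$.}
I would prove $\|p'\|_{L_2(w)}\le C(M+1)\sqrt{k}\,\|p\|_{L_2(w)}$ for $\deg p\le k$, roughly in the regime $k\gtrsim M^2$. Note that $w=\varphi\cdot(\varphi/\bar f)$ and $\varphi/\bar f=1/\int e^{\theta y-\theta^2/2}\,d\frac{G+H}{2}$. This lies between $e^{-M|y|}$ and $e^{M^2/2}$ up to the $e^{-\theta y}$ tilt, so $w$ is a Gaussian with a log-convex perturbation. Following the paper's linear-algebraic approach, I would represent differentiation in an orthonormal basis of $L_2(w)$, or compare with Hermite polynomials via the Gram matrix $\bigl(\int p_ip_jw\bigr)$. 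One then bounds the operator norm of $D$ by $\sqrt{k}$ times the condition-number distortion between $w$ and a shifted or rescaled Gaussian on polynomials of degree $k$. \textbf{This step is the main obstacle}: the weight is non-standard, and the ratio $w/\varphi$ is unbounded below. I would try to show the Gram matrices of $w$ and of $\varphi_{c}$ (Gaussian with variance adjusted by $M$) are comparable on degree-$k$ polynomials through the identity $1/\bar f$ being completely monotone-type in $y$.

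\emph{Step 3: polynomial approximation and optimization over $k$.}
We have
\[
g(y)=\int e^{\theta y-\theta^2/2}\,d(G-H)(\theta).
\]
Truncate the Taylor series of $e^{\theta y}$ at degree $k$ to get a polynomial $p$. Under weight $w\lesssim \varphi e^{M|y|}$, the remainder satisfies
\[
\|g-p\|_{L_2(w)},\ \|g'-p'\|_{L_2(w)}\lesssim \frac{(CM)^{k}\,\mathbb E|Z+M|^{k}}{k!}\approx\Bigl(\frac{CM^2}{k}\Bigr)^{k/2}=e^{-\Omega(k\log(k/M^2))}.
\]
Then
\[
\|g'\|\le \|p'\|+\|g'-p'\|\le C(M+1)\sqrt{k}\bigl(\epsilon+\|g-p\|\bigr)+\|g'-p'\|.
\]
Choose $k\asymp \log(1/\epsilon)/\log\bigl(\log(1/\epsilon)/M^2\bigr)$ so that the approximation errors are at most $\epsilon$; this is valid when $M^2\le\frac1{10}\log(1/\epsilon)$ and gives the first case. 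When $M^2\ge\frac1{10}\log(1/\epsilon)$, take $k\asymp M^2$; there $k\asymp\log(1/\epsilon)\lesssim M^2$, giving the bound $M^4\epsilon^2$ after collecting the $(M+1)^2$ factors.
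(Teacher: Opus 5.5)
Your overall program is the paper's: the same $g$ and $w$, a Bernstein inequality, polynomial approximation, and $k\asymp\log\frac1\epsilon/\log\frac{\log(1/\epsilon)}{M^2}$. Your Step~3 is essentially fine. Steps~1 and~2, however, each have a genuine gap.

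\emph{Step~1.} The pointwise bound $f_G\gtrsim e^{-\sigma^2/2}\bar f$ is false. Your own estimates give only $f_G/\bar f\ge e^{-\sigma^2/2}e^{y\,\Expect_G\theta-M|y|}\ge e^{-\sigma^2/2}e^{-2M|y|}$. Moreover, $f_H/f_G$ is unbounded even when $\hellinger(f_G,f_H)$ is tiny: for $G=\delta_0$, $H=(1-\eta)\delta_0+\eta\delta_M$ it equals $1-\eta+\eta e^{My-M^2/2}$.

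With the tools you list, restricting to $|y|\le T\asymp\sqrt{\log(1/\epsilon)}$ costs a factor $e^{2MT}$. This is super-polylogarithmic and cannot be absorbed into $e^{\sigma^2/2}(M+1)^2$. Getting a bounded ratio on $|y|\le T$ would need a separate pointwise argument from the Hellinger constraint, which the proposal does not contain.

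Your triangle-inequality split also throws away an exact cancellation. In fact $\frac{f_G'}{f_G}-\frac{f_H'}{f_H}=\varphi(g'-m_Hg)/f_G$, so the weight $(|y|+M)$ appears only because you separated the two pieces. Truncating $\|(|y|+M)g\|_{L_2(w)}^2$ at $T$ already gives $\epsilon^2\log\frac1\epsilon$, a $\log\log$ factor above the target. Treating $y$ as a polynomial factor instead would need an additional multiplication-by-$y$ estimate on degree-$k$ polynomials.

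The missing idea is the symmetric identity
\[
m_G-m_H=\frac{(m_G+m_H)(f_H-f_G)+2(m_Gf_G-m_Hf_H)}{f_G+f_H},
\qquad
g'=\frac{m_Gf_G-m_Hf_H}{\varphi}.
\]
Multiplying its square by $f_G\le f_G+f_H$ gives $\Regret(H\|G)\lesssim M^2\|g\|_{L_2(w)}^2+\|g'\|_{L_2(w)}^2$ directly. The coefficients are the bounded posterior means, the denominator is already $2\bar f$, and no truncation or density-ratio bound is needed.

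\emph{Step~2.} The Bernstein inequality carries the whole argument, but it is not proved, and the route you suggest cannot give $O((M+1)\sqrt k)$. (Also, $\varphi/\bar f$ is log-concave, not log-convex.)

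For $\nu=\frac12(\delta_{-M}+\delta_M)$ one has $-\log w=\frac{y^2}{2}+\log\cosh(My)+\mathrm{const}$, i.e.\ a quadratic plus a term behaving like $M|y|$. On $[-\sqrt k,\sqrt k]$, where degree-$k$ polynomials can localize their mass, $M|y|$ differs from every quadratic by $\Omega(M\sqrt k)$. For $\varphi$ itself, compare $p=1$ with $p=(y-\sqrt k)^k$, whose $\varphi$-mass sits near $y=-\sqrt k$, where $w/\varphi\asymp e^{M^2/2}e^{-M\sqrt k}$. So the Gram matrices of $w$ and of a Gaussian on degree-$k$ polynomials are comparable only up to $e^{\Omega(M\sqrt k)}$. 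Your comparison can therefore only yield a Bernstein constant of order $\sqrt k\,e^{\Omega(M\sqrt k)}$, which is super-polylogarithmic in $1/\epsilon$ at the relevant $k$.

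The paper instead works intrinsically with the orthonormal polynomials $q_j$ of $w$:
\begin{itemize}
\item Integration by parts gives $\langle q_i,q_j'\rangle=\langle q_i,q_jV'\rangle$ for $i<j$, where $V'=-(\log w)'=y+m_\nu$ and $|m_\nu|\le M$.
\item The $m_\nu$-part of the differentiation matrix has Frobenius norm at most $M\sqrt k$.
\item The $y$-part is a single superdiagonal with entries $a_j=\kappa_{j-1}/\kappa_j$ satisfying $a_j(a_j+\beta_j)=j$ with $|\beta_j|\le M$, so $a_j\le\sqrt j+M$.
\end{itemize}
This yields $\|p'\|_{L_2(w)}\le(2M+1)\sqrt{k+1}\,\|p\|_{L_2(w)}$ for every $k$.
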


    \subsection{A first reduction}

    \begin{lemma}
      \label{lmm:reduction}
      \[
      \Regret(H\|G)
        \leq C_0 M^2 \delta + C_0 \Delta
      \]
      for some absolute constant $C_0$, where
 \begin{align}
      \delta &= 2 \int_\reals \frac{(f_G-f_H)^2}{f_G+f_H} \\
      \Delta &= 2 \int_\reals \frac{(m_Gf_G-m_Hf_H)^2}{f_G+f_H}.
    \end{align}
    \end{lemma}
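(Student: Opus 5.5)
The plan is a purely algebraic decomposition of the pointwise difference of Bayes rules, weighted by the symmetric mixture $(f_G+f_H)/2$. No polynomial approximation is needed at this stage. The only analytic input is the trivial bound $|m_G(y)|,|m_H(y)|\le M$ for all $y$. It holds because $m_G(y)=\Expect_G[\theta\mid Y=y]$ is an average of $\theta$ over the posterior, which is supported on $[-M,M]$ when $G$ is; likewise for $H$.

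\emph{Step 1 (symmetrize the weight).} Since $f_G\le f_G+f_H$, we have
\[
\Regret(H\|G)=\int (m_G-m_H)^2 f_G \le \int (m_G-m_H)^2 (f_G+f_H)
=\int \frac{\big[(m_G-m_H)(f_G+f_H)\big]^2}{f_G+f_H}.
\]
This replaces the asymmetric weight $f_G$ by $f_G+f_H$, which is the denominator appearing in both $\delta$ and $\Delta$.

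\emph{Step 2 (algebraic identity).} Next I will use the identity
\[
m_Gf_G-m_Hf_H=\tfrac12(m_G-m_H)(f_G+f_H)+\tfrac12(m_G+m_H)(f_G-f_H),
\]
which follows by expanding the right-hand side. Rearranging gives
\[
(m_G-m_H)(f_G+f_H)=2(m_Gf_G-m_Hf_H)-(m_G+m_H)(f_G-f_H).
\]
Applying $(a-b)^2\le 2a^2+2b^2$ pointwise yields
\[
\Regret(H\|G)\le 8\int\frac{(m_Gf_G-m_Hf_H)^2}{f_G+f_H}+2\int (m_G+m_H)^2\frac{(f_G-f_H)^2}{f_G+f_H}.
\]

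\emph{Step 3 (bound each term).} The first term equals $4\Delta$. For the second term I use $(m_G+m_H)^2\le 4M^2$ from the posterior-mean bound, so it is at most $8M^2\int \frac{(f_G-f_H)^2}{f_G+f_H}=4M^2\delta$. Combining, the lemma holds with $C_0=4$.

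There is no real obstacle here. The only points needing care are that all integrands are well defined, which holds since $f_G+f_H>0$ everywhere as Gaussian mixtures, and that the posterior-mean bound holds for every $y$ rather than just almost surely. This lemma is simply the reduction that converts the regret into the two "Hellinger-like" quantities $\delta$ and $\Delta$, which later steps will control via polynomial approximation.
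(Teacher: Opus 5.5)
Your proof is correct and is essentially the paper's argument. Your identity $(m_G-m_H)(f_G+f_H)=2(m_Gf_G-m_Hf_H)-(m_G+m_H)(f_G-f_H)$ is exactly the decomposition \eqref{eq:decomp}: the paper derives it by manipulating the scores $f_G'/f_G$ and $f_H'/f_H$, while you verify it directly in terms of posterior means, and the remaining steps ($f_G\le f_G+f_H$, $(a+b)^2\le 2a^2+2b^2$, $|m_G|,|m_H|\le M$) coincide, with your explicit constant $C_0=4$ checking out.
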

    \begin{proof}
      Write
      \begin{align}
        \frac{f'_G}{f_G} - \frac{f'_H}{f_H}
        = & ~  \frac{f_G'}{f_G}-\frac{2f_G'}{f_G+f_H} + \frac{2(f_G'-f_H')}{f_G+f_H} +
        \frac{2f_H'}{f_G+f_H}  - \frac{f_H'}{f_H}\nonumber\\
        = & ~   \frac{f_G'(f_H-f_G)}{f_G(f_G+f_H)} + \frac{2(f_G'-f_H')}{f_G+f_H} +
        \frac{f_H'(f_H-f_G)}{f_H(f_G+f_H)} \nonumber \\
        = & ~   (m_G+m_H)
        \frac{(f_H-f_G)}{f_G+f_H} + 2 \frac{m_G f_G-m_H f_H}{f_G+f_H}.
        \label{eq:decomp}
      \end{align}
      The proof is completed by noticing that   $|m_G|,|m_H|\leq M$.
    \end{proof}
Note that $\delta$ is on par with the squared Hellinger distance
$\epsilon^2$:
\begin{equation}
2 \epsilon^2 \leq \delta \leq 4\epsilon^2.
\label{eq:delta-eps}
\end{equation}
Thus, \prettyref{lmm:reduction} reduces the problem to proving
    \[
      \Delta \leq C \delta \frac{\log \frac{1}{\delta}}{\log\log \frac{1}{\delta}},
    \]
    while retaining dependence of the constant $C$ on $M$ and $\sigma$.

    \subsection{Polynomial approximation and a Bernstein-style inequality}

    Let $f \equiv (f_G+f_H)/2= f_{(G+H)/2}$.
Define the density ratio
    \begin{equation}
      g = \frac{f_G-f_H}{\varphi}
      \label{eq:g}
    \end{equation}
    where $\varphi$ is the standard normal density.
    Define the weight function
\begin{equation}
w = \frac{\varphi^2}{f}.
    \label{eq:w}
\end{equation}
    Crucially, we have
    \begin{align}
      \delta &= \int_{\reals} g^2 w = \|g\|_{L_2(w)}^2 \label{eq:delta}\\
      \Delta &= \int_{\reals} (g'(x))^2 w(x)\,dx = \|g'\|_{L_2
      (w)}^2
      \label{eq:Delta}.
    \end{align}
    To estimate the differential operator on $g$, we first approximate $g$ by a polynomial, bound the operator norm of
    the differential operator on polynomials by a linear algebraic argument, then quantify the approximation error bound. These are carried out in the following two lemmas.

    \begin{lemma}[Bernstein-style inequality for weight $w$]
      \label{lmm:bernstein}
      Let $p$ be a degree-$k$ polynomial.  Then
      \[
        \|p'\|_{L_2(w)}
        \leq (2M+1)\sqrt{k+1} \|p\|_{L_2(w)}.
      \]
    \end{lemma}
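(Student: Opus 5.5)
The plan is to treat differentiation $D$ as an operator on polynomials in the Hilbert space $L_2(w)$. I will compute its formal adjoint $D^*$ and exploit a commutator identity, in the spirit of the Hermite/creation–annihilation proof of \prettyref{eq:bernstein-gaussian}. The bound then follows from a backward recursion on the norms of successive derivatives.

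\textbf{Step 1: the log-derivative of $w$.} Write $\mu=(G+H)/2$, so that $f=f_\mu$ and $\log w = 2\log\varphi - \log f$. By Tweedie's formula \prettyref{eq:tweedie} applied to $\mu$,
\[
\frac{w'}{w} = -2x - \frac{f'}{f} = -2x - (m(x)-x) = -x - m(x),
\]
where $m=m_\mu$ is the posterior mean. Since $\mu$ is supported on $[-M,M]$, we have $|m|\le M$. Moreover $m'(x)=\Var(\theta\mid Y=x)\in[0,M^2]$.

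\textbf{Step 2: adjoint and commutator.} For polynomials $p,q$, integrate by parts; the boundary terms vanish because $w(x)\le \varphi(x)^2/(\varphi(|x|+M)\cdot\text{const})$ decays like a Gaussian. This gives
\[
\langle p',q\rangle_w = \langle p, D^*q\rangle_w, \qquad D^*q := -q' + (x+m)q .
\]
A direct computation yields $DD^*q - D^*Dq = (1+m')q$. Hence for any polynomial $q$,
\[
\|D^*q\|_{L_2(w)}^2 = \|q'\|_{L_2(w)}^2 + \int (1+m')q^2 w \le \|q'\|_{L_2(w)}^2 + (1+M^2)\|q\|_{L_2(w)}^2 .
\]
Here $D^*q$ is not a polynomial, but it is still in $L_2(w)$, which is all we need.

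\textbf{Step 3: recursion.} Let $a_j=\|p^{(j)}\|_{L_2(w)}$, so that $a_{k+1}=0$. By Cauchy–Schwarz and Step 2,
\[
a_j^2=\langle p^{(j-1)},D^*p^{(j)}\rangle \le a_{j-1}\|D^*p^{(j)}\| ,
\]
and therefore
\[
a_j^4\le a_{j-1}^2\bigl(a_{j+1}^2+(1+M^2)a_j^2\bigr).
\]
Set $t_j=a_j^2/a_{j-1}^2$, with the trivial case $a_j=0$ handled separately. Dividing through gives $t_j\le t_{j+1}+(1+M^2)$ with $t_{k+1}=0$. Backward induction then gives $t_j\le (k-j+1)(1+M^2)$, and in particular $t_1\le k(1+M^2)$. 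Consequently
\[
\|p'\|_{L_2(w)}\le \sqrt{k(1+M^2)}\,\|p\|_{L_2(w)}\le (2M+1)\sqrt{k+1}\,\|p\|_{L_2(w)} .
\]

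The main point needing care is Step 2. One must justify the integration by parts, which requires polynomial growth against Gaussian decay of $w$. One must also correctly identify the commutator term as $1+m'$ with $0\le m'\le M^2$; this is exactly where compact support of the prior enters. The rest is the elementary backward induction.
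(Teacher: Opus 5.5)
Your argument is correct, but it follows a genuinely different route from the paper. The paper expands in the orthonormal polynomials $\{q_j\}$ of $w$ and represents $D$ by the strictly upper-triangular matrix $L_{ij}=\langle q_i,q_jV'\rangle$ ($i<j$). It then splits $V'=m_\nu+y$. The $m_\nu$ part is bounded in Frobenius norm by $M\sqrt{k}$. The $y$ part is a single superdiagonal with entries $a_j=\kappa_{j-1}/\kappa_j$ satisfying $a_j(a_j+\beta_j)=j$, hence $a_j\le\sqrt{j}+M$.

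You bypass orthogonal polynomials entirely. You use the commutator identity $[D,D^*]=1+m'$, i.e.\ $V''=1+\Var(\theta\mid Y=x)$ in the paper's notation $w=e^{-V}$, together with a backward recursion on $\|p^{(j)}\|_{L_2(w)}$. This yields the slightly better constant $\sqrt{(1+M^2)k}$. Structurally, your recursion is the Jiang--Zhang recursion $\Delta_k^2\le\Delta_{k-1}(\Delta_{k+1}+C\sqrt{\log(1/\rho)}\,\Delta_k)$ restricted to polynomials. There the chain terminates at $p^{(k+1)}=0$, so no separate bound on high derivatives is needed.

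The two proofs consume different information about the weight. The paper needs only the first-order fact $|V'(y)-y|\le M$ (cf.\ \prettyref{rmk:bernstein-perturb}). You need the second-order bound $\sup V''\le 1+M^2$, which holds here because a prior on $[-M,M]$ has posterior variance at most $M^2$. For priors with exponential tails the posterior variance, and hence $V''$, can be unbounded. So your recursion does not carry over verbatim to \prettyref{prop:exp-bernstein}. The paper's matrix framework, which instead uses $V''\ge 1$ and $|V'|\lesssim 1+|y|$, is what extends in \prettyref{app:bernstein}.

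One minor point in Step 2. What justifies $\|D^*q\|^2=\langle q,DD^*q\rangle$ is not merely $D^*q\in L_2(w)$. It is integration by parts against the smooth function $D^*q$, which is legitimate because $m$ and $m'$ are bounded, so $D^*q$ and its derivative grow at most polynomially. Alternatively, expand $\int(-q'+V'q)^2w$ and integrate the cross term by parts; this gives $\|q'\|^2+\int V''q^2w$ directly.
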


    \begin{lemma}[Polynomial approximation]
      \label{lmm:poly-f}
      Let $g$ be given in \prettyref{eq:g}. Assume that $\nu = (G+H)/2$ has zero mean.
      For any $k \geq 2e M^2$,
      there exists a degree-$k$ polynomial $p$ such that
      \[
        \|g-p\|_{L_2(w)}^2 \maxwith
        \|g'-p'\|_{L_2(w)}^2
        \lesssim e^{\sigma^2/2} M^2 \pth{\frac{eM^2}{k}}^k.
      \]
    \end{lemma}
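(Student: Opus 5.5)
The plan is to approximate $g$ by truncating the Taylor series of the exponential, and then to control the remainder in $L_2(\varphi)$ after reducing the weight $w$ to a Gaussian.

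\textbf{Step 1: a series for $g$.} Since $\varphi(x-\theta)/\varphi(x)=e^{\theta x-\theta^2/2}$, we have
\[
g(x)=\int e^{\theta x-\theta^2/2}\,d(G-H)(\theta)=\sum_{j\ge 0} c_j x^j,
\qquad c_j=\frac{1}{j!}\int \theta^j e^{-\theta^2/2}\,d(G-H)(\theta).
\]
Because both measures live on $[-M,M]$, the coefficients satisfy $|c_j|\le 2M^j/j!$. I take $p(x)=\sum_{j\le k}c_jx^j$ and set $r=g-p=\sum_{j>k}c_jx^j$. Term-by-term differentiation is justified since the series is entire, and it gives $r'=\sum_{j>k} j c_j x^{j-1}$.

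\textbf{Step 2: reducing the weight to a Gaussian.} I bound $w=\varphi^2/f$ from above by a multiple of $\varphi$. Write $f(x)=\varphi(x)\int e^{\theta x-\theta^2/2}\,d\nu(\theta)$ with $\nu=(G+H)/2$. By Jensen's inequality, the zero-mean assumption on $\nu$, and the second-moment bound $\int\theta^2\,d\nu\le\sigma^2$,
\[
\int e^{\theta x-\theta^2/2}\,d\nu(\theta)\ \ge\ \exp\pth{x\int\theta\,d\nu-\tfrac12\int\theta^2\,d\nu}\ \ge\ e^{-\sigma^2/2}.
\]
Hence $w\le e^{\sigma^2/2}\varphi$, which is the source of the factor $e^{\sigma^2/2}$. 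It then suffices to bound $\|r\|_{L_2(\varphi)}$ and $\|r'\|_{L_2(\varphi)}$.

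\textbf{Step 3: bounding the tails.} By Minkowski's inequality,
\[
\|r\|_{L_2(\varphi)}\le \sum_{j>k}\frac{2M^j}{j!}\,\|x^j\|_{L_2(\varphi)},
\qquad
\|x^j\|_{L_2(\varphi)}^2=(2j-1)!!.
\]
Stirling's formula gives $(2j-1)!!\lesssim (2j/e)^j$ and $j!\gtrsim (j/e)^j$. Therefore the $j$-th term is at most of order $(2eM^2/j)^{j/2}$, up to constants and factors polynomial in $j$. The same bound, with constants adjusted, should come out as $(eM^2/j)^{j/2}$ if one uses the sharper $\|x^j\|_{L_2(\varphi)}/j!\lesssim (e/j)^{j/2}$, which follows from $(2j-1)!!=\frac{(2j)!}{2^j j!}$.

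Under the hypothesis $k\ge 2eM^2$, consecutive terms have ratio roughly $\sqrt{eM^2/j}\le 1/\sqrt2$. So the series is dominated by a geometric series and is within a constant of its first term. Squaring gives
\[
\|r\|_{L_2(\varphi)}^2\lesssim \pth{\frac{eM^2}{k}}^{k+1}\le M^2\pth{\frac{eM^2}{k}}^k .
\]
For the derivative, the extra factor $j$ combines with the shift $x^{j-1}$. The $j$-th term becomes $2jM^j\|x^{j-1}\|_{L_2(\varphi)}/j! = 2M\cdot M^{j-1}\|x^{j-1}\|_{L_2(\varphi)}/(j-1)!$. This is the previous bound at index $j-1\ge k$, times $M$, so $\|r'\|_{L_2(\varphi)}^2\lesssim M^2(eM^2/k)^k$. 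Multiplying both bounds by $e^{\sigma^2/2}$ yields the claim.

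\textbf{Main obstacle.} The delicate part is the Stirling bookkeeping in Step 3. One must verify that the ratio $\|x^j\|_{L_2(\varphi)}/j!$ really decays like $(e/j)^{j/2}$ with absolute constants. This is what makes the threshold $k\ge 2eM^2$ sufficient for geometric decay and produces exactly the form $(eM^2/k)^k$ without extra factors exponential in $k$. If the constants come out slightly worse, I would absorb them by adjusting the absolute constant implicit in $\lesssim$, or by slightly strengthening the threshold on $k$.
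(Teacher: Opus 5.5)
Your reduction $w\le e^{\sigma^2/2}\varphi$ in Step~2 matches the paper, but Step~3 has a genuine gap. The ``sharper'' estimate $\|x^j\|_{L_2(\varphi)}/j!\lesssim (e/j)^{j/2}$ is false. From $(2j-1)!!=(2j)!/(2^j j!)$ you get $\|x^j\|_{L_2(\varphi)}^2/(j!)^2=\binom{2j}{j}/(2^j j!)\ge 2^j/(2\sqrt{j}\,j!)$, which is $(2e/j)^j$ up to a factor polynomial in $j$. Carried out correctly, your argument therefore yields only $e^{\sigma^2/2}M^2(2eM^2/k)^k$. That is off from the claim by a factor $2^k$, and neither an absolute constant nor a change of threshold on $k$ can absorb it.

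This is not an artifact of Minkowski's inequality: the Taylor truncation is the wrong polynomial. Take $G=\delta_M$, $H=\delta_{-M}$, so $\nu$ is centered and $\sigma^2=M^2$. Then $g=2e^{-M^2/2}\sinh(Mx)$, and the Taylor tail $g-p$ has nonnegative coefficients on odd powers only. Hence $(g-p)(x)^2\ge 4e^{-M^2}M^{2j_0}x^{2j_0}/(j_0!)^2$, where $j_0\in\{k+1,k+2\}$ is the first odd index above $k$. Since $w=e^{M^2/2}\varphi/\cosh(Mx)\ge e^{M^2/2}e^{-M|x|}\varphi$, you can restrict to $|x|\le\sqrt{4j_0+2}$, which carries at least half of $\int x^{2j_0}\varphi$. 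This gives $\|g-p\|_{L_2(w)}^2\ge e^{-M^2/2-M\sqrt{4j_0+2}}(2M^2)^{j_0}/(\sqrt{j_0}\,j_0!)$. That exceeds $e^{\sigma^2/2}M^2(eM^2/k)^k$ by a factor of order $2^k e^{-O(M^2+M\sqrt{k})}/\mathrm{poly}(k)$, which is unbounded as $k\to\infty$ for fixed $M$.

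The missing idea is to take the $L_2(\varphi)$-orthogonal projection instead, i.e.\ the Hermite truncation used in the paper. The generating function $e^{\theta x-\theta^2/2}=\sum_j\theta^jH_j(x)/j!$ gives $g=\sum_j\frac{m_j(G)-m_j(H)}{j!}H_j$. Truncate at degree $k$ and use $\int H_iH_j\varphi=j!\,\mathbf 1\{i=j\}$ and $H_j'=jH_{j-1}$. This yields the exact identities $\|g-p\|^2_{L_2(\varphi)}=\sum_{j>k}|m_j(G)-m_j(H)|^2/j!$ and $\|g'-p'\|^2_{L_2(\varphi)}=\sum_{j>k}|m_j(G)-m_j(H)|^2/(j-1)!$. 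Both are $\lesssim M^2(eM^2/k)^k$ for $k\ge 2eM^2$, using $|m_j|\le M^j$ and $j!\ge(j/e)^j$.

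Orthogonality is what removes the $2^k$. Since $\|x^j\|^2_{L_2(\varphi)}=(2j-1)!!\asymp 2^j j!/\sqrt{j}$ while $\|H_j\|^2_{L_2(\varphi)}=j!$, the Taylor tail keeps large components along $H_0,\dots,H_k$, which the orthogonal projection absorbs into $p$. Your weaker bound $M^2(2eM^2/k)^k$ would still serve the choice of $k$ in \prettyref{thm:ub-compact} after adjusting constants, but it does not prove the lemma as stated.
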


\subsection{Complete the proof}
With Lemmas \ref{lmm:reduction}--\ref{lmm:poly-f}, we now prove \prettyref{thm:ub-compact}.

\begin{proof}
      Without loss of generality, we may assume $\delta \leq 0.01$.
      In addition, we may assume $\nu=\frac{G+H}{2}$ has zero mean at a price of inflating
      $M$ to $2M$. Indeed, suppose the mean $\mu$ is not zero. Let $\tilde G$ and $\tilde
      H$ be the shifted version of $G$ and $H$ by $-\mu$ respectively, so that $\tilde
      \nu=(\tilde G+\tilde H)/2$ has zero mean.
      Since $\mu \in [-M,M]$,
      $\tilde G,\tilde H$ are both supported on $[-2M,2M]$. Furthermore,
      both the Hellinger distance and the regret are  shift-invariant:
      $\Regret(\tilde H\|\tilde G)= \Regret(H\|G)$ and
      $\hellinger^2(f_{\tilde G},f_{\tilde H}) = \hellinger^2(f_G,f_H)$.

      First, for a degree-$k$ polynomial,  we have
      \begin{align*}
        \|g'\|_{L_2(w)}
        &\leq
        \|g'-p'\|_{L_2(w)} +
        \|p'\|_{L_2(w)}\\
        &\leq
        \|g'-p'\|_{L_2(w)}+
        (2M+1)\sqrt{k+1} \|p\|_{L_2(w)}\\
        &\leq \|g'-p'\|_{L_2(w)}+
        (2M+1)\sqrt{k+1} (\|g\|_{L_2(w)}
        + \|g-p\|_{L_2(w)})
      \end{align*}
      where the second inequality follows from  \prettyref{lmm:bernstein}.
      Combining \prettyref{lmm:bernstein}, \prettyref{lmm:poly-f}, and  
      \prettyref{eq:delta}--\prettyref{eq:Delta}, we obtain
      \[
        \Delta
        \lesssim e^{\sigma^2/2}\pr{
        (M+1)^2 k    \delta+
        M^2 (M+1)^2 k \pth{\frac{eM^2}{k}}^{k}}
      \]
      provided $k\geq 2eM^2$.
      Finally, we optimize $k$.
      If $M^2 \leq 0.1 \log \frac{1}{\delta}$, we choose $k = C_1 \frac{\log \frac{1}{\delta}}{\log \frac{\log \frac{1}{\delta}}{M^2}}$
      for suitably large absolute constant $C_1$.
      If $M^2 \geq 0.1 \log \frac{1}{\delta}$, we choose $k = C_1 M^2$. In both cases, the
      second term in the preceding display is at most $\delta^{10}$. Finally, invoking \prettyref{lmm:reduction} completes the proof.
    \end{proof}

\subsection{Proof of lemmas}

    \paragraph{Proof of \prettyref{lmm:bernstein}}

    Define the inner product weighted by $w$ as:
    \[
      \Iprod{f}{g} \equiv \int_\reals f g w, \quad f,g\in L_2(w).
    \]
    Note that $w$ has at least a Gaussian tail so $L_2(w)$ includes all polynomials.
    Denote by $\{q_j: j\geq 0\}$ the system of orthonormal polynomials for $w$ such that
    \[
      \Iprod{q_i}{q_j} = \indc{i=j}.
    \]
    In particular, $\Iprod{q_i}{p}=0$ for every polynomial of degree at most $i-1$.
    To be definitive, assume that the leading coefficient of each $q_i$ is (strictly) positive, denoted by $\kappa_j>0$.

    Expand a degree-$k$ polynomial $p$ as
    $p=\sum_{j=0}^k a_j q_j$ and
    $p'=
    \sum_{j=0}^k b_j q_j$, where the coefficients are related by vector-matrix multiplication:
    \[
      b = L a
    \quad \text{ and }\quad
      \|p'\|_{L_2(w)}
      \leq \|L\|_{\op} \|p\|_{L_2(w)}.
    \]
    where $ \|L\|_{\op}$ is the largest singular value of the $(k+1)\times (k+1)$ matrix $L$.

    We thus analyze $L$ and bound $\norm{L}_{\op}$. Note that $p'=\sum_{j=0}^k a_j q_j'$, and
    so $L= (L_ {ij})$ is given by expanding each $q_j'$, namely
    \[
      q_j' = \sum_{i=0}^k L_{ij} q_i.
    \]
    By orthogonality,
    $L_{ij} = \Iprod{q_i}{q_j'}$. Clearly, $L_{ij} = 0$ for all $i\geq j$; in other words, $L$ is upper triangular.
    Note that, by integration by parts, we have
    \begin{equation}
      \Iprod{f'}{g} =
      - \Iprod{f}{g'} + \Iprod{f}{gV'} \quad \text{where } \quad w = e^{-V}, \quad V'=-\frac{w'}{w}, 
      \label{eq:adjoint}
    \end{equation}
    for all $f,g$ with at most exponential growth.
    Therefore, \[
      L_{ij} =
      \begin{cases}
        0, & i \ge j\\
        \ip{q_i, q_j V'} &  i < j
      \end{cases},
    \]
    since $\ip{q_i', q_j} = 0$ when $ i < j$.

    To bound the operator norm of $L$, we decompose it into a \textit{dense} part and a \textit{sparse} part.
    Since $w = \varphi^2/f$, $f= f_\nu$ with $\nu=\frac{G+H}{2}$ supported on $[-M,M]$, we have
    \begin{equation}
      V'(y) = -(\log w)'(y) = \frac{f_\nu'}{f_\nu}(y) + 2 y = m_\nu(y) + y
      \label{eq:Vprime}
    \end{equation}
    Then
    $L=A+B$,
    where
    \[
      A_{ij} =
      \begin{cases}
        \Iprod{q_i}{q_j m_\nu}    & i < j\\
        0  &  \text{else}
      \end{cases}
      \quad
      B_{ij} =
      \begin{cases}
        \Iprod{q_{j-1}}{q_j y}    & i = j-1\\
        0   & \text{else}
      \end{cases}
    \]
    where we used the fact that,  by orthogonality,
    $\Iprod{q_i}{q_j y}=
    \Iprod{q_i y}{q_j}
    =0$ if $i<j-1$.
    It remains to bound the operator norms of $A$ and $B$ separately.

    For $A$, we compute its Frobenius norm:
    \[
      \|A\|_F^2=
      \sum_{j=1}^k \sum_{i=0}^{j-1}
      \Iprod{q_i}{q_j m_\nu}^2
      \leq \sum_{j=1}^k \|q_j m_\nu\|_{L_2(w)}^2
      \leq M^2 \sum_{j=1}^k \| {q_j}\|_{L_2(w)}^2 = M^2 k.
    \]
    Thus $\|A\|_{\op} \leq M \sqrt{k}$.

    Note that $B$ contains a single superdiagonal. Then
    \[
      \|B\|_{\op} = \max_{j=1,\ldots,k} |a_j|
    \]
    where
    \[
      a_j \equiv B_{j-1,j} =
      \Iprod{q_{j-1}}{q_j y} = \ip{q_{j-1}y, q_j}.
    \]
    Equivalently, $a_j$ appears in the three-term recursion:
    \begin{align}
      y q_j(y) = a_{j+1} q_{j+1}(y)
      + b_j q_j(y) + a_j q_{j-1}(y)
    \end{align}
    for some $b_j$.
    By assumption
    $q_j(y)=\kappa_j y^j +\ldots$ and $q_{j-1}(y)=\kappa_{j-1} y^{j-1} +\ldots$ with
    $\kappa_j>0$. Thus $q_j = \frac{\kappa_j}{\kappa_{j-1}}y q_{j-1} + p_{j-1}$ for some
    polynomial of degree at most $j-1$.
    Comparing leading coefficients, we thus get
    $a_j =\Iprod{q_j }{q_{j-1} y}
    = \frac{\kappa_{j-1}}{\kappa_j}>0$.
    By a similar argument, $\Iprod{q_j'}{q_{j-1}} = \frac{j\kappa_j}{\kappa_{j-1}}$.
    Applying \prettyref{eq:adjoint} and \prettyref{eq:Vprime},
    \[
      \underbrace{\Iprod{q_j'}{q_{j-1}}}_{j/a_j} =
      -\underbrace{\Iprod{q_j}{q_{j-1}'}}_{=0} +
      \Iprod{q_j }{q_{j-1} V'}
      = \underbrace{\Iprod{q_j }{q_{j-1} m_\nu}}_{\equiv \beta_j} + \underbrace{\Iprod{q_j }{q_{j-1}y}}_{a_j}
    \]
    Thus we get
    $a_j(a_j+\beta_j) = j$, that is,
    $a_j = (-\beta_j + \sqrt{\beta_j^2 + 4 j})/2$. Applying Cauchy-Schwarz,
    \[
      |\beta_j| = |\Iprod{q_j }{q_{j-1} m_\nu}| \leq
      \|q_j\|_{L_2(w)} \|q_{j-1}\|_{L_2(w)}
      \|m_\nu\|_\infty \leq M.
    \]
    and hence $a_j \leq \sqrt{j} + M$.
    This shows
    \[
      \|B\|_{\op} \leq \sqrt{k}+M
    \]
    and hence $\|L\|_{\op} \leq \sqrt{k}+M + M\sqrt{k}$, as desired.

    \begin{remark}
    \label{rmk:bernstein-perturb}
      The same argument shows that the weight $w$ satisfies the following Bernstein-style inequality
      \[
        \|p'\|_{L_2(w)}
        \leq O(\sqrt{k}) \|p\|_{L_2(w)}
      \]
      for all
      degree-$k$ polynomial $p$, provided that the score function
      $V'= -(\log w)'$ is a bounded perturbation to a linear function
      \[
        |V'(y) - c y| \leq C
      \]
      for some constant $c,C > 0$.
      This can be viewed as a perturbation result to the Bernstein inequality for Gaussians.\footnote{This is an immediate consequence of the derivative of Hermite polynomials: $H_j'=j H_{j-1}$ \cite{orthogonal.poly}.} For example, for $w=\varphi$,
      \[
        \|p'\|_{L_2(\varphi)}
        \leq \sqrt{k} \|p\|_{L_2(\varphi)},
      \]
      attained at Hermite polynomials.
    \end{remark}

    \paragraph{Proof of \prettyref{lmm:poly-f}}
    Since $\nu=(G+H)/2$ is centered, we have by Jensen's inequality
    \[
      f(y) = \Expect_{U\sim \nu}[\varphi(y-U)]
      \geq \exp(-\sigma^2/2) \varphi(y)
    \]
    where $\Expect_{U\sim \nu}[U^2] \leq \sigma^2$ by the assumption that the second moments of $G$ and $H$ are both at most $\sigma^2$.
    Thus
    \begin{equation}
      w(y) \leq \exp(\sigma^2/2) \varphi(y)
        \label{eq:w-phi}
    \end{equation}
    So it suffices to bound the approximation error of $g$ and $g'$ in $L_2(\varphi)$, under standard Gaussian.

    It is well-known that the likelihood ratio between a Gaussian mixture and a standard
    Gaussian admits the following Hermite expansion (see, e.g., \cite[Lemma 9]{WY18}):
    \[
      \frac{f_G(y)}{\varphi(y)} = \sum_{j\geq 0} \frac{m_j(G)}{j!} H_j(y)
    \]
    where $H_j$ is the degree-$j$ Hermite polynomial satisfying $\Expect[H_i(Z)H_j(Z)]=i! \indc{i=j}$, and $m_j(G)$ is the $j$th moment of $G$.
    Hence
    \[
      g(y) =
      \frac{f_G(y)-f_H(y)}{\varphi(y)} = \sum_{j\geq 0} \frac{m_j(G)-m_j(H)}{j!} H_j(y).
    \]
    Truncate this series at $k$ and denote it by $p(y)$. Then we also have
    $g'(y)-p'(y) =
    \sum_{j>k} \frac{m_j(G)-m_j(H)}{j!} H_j'(y) =
    \sum_{j>k} \frac{m_j(G)-m_j(H)}{(j-1)!} H_{j-1}(y)$.
    Thus
    \[
      \|g-p\|_{L_2(\varphi)}^2 = \sum_{j>k} \frac{|m_j(G)-m_j(H)|^2}{j!},
      \quad
      \|g'-p'\|_{L_2(\varphi)}^2 = \sum_{j>k} \frac{|m_j(G)-m_j(H)|^2}{(j-1)!}
    \]
    and it suffices to bound the latter.
    Note that $|m_j(G)|,|m_j(H)| \leq M^j$ and $j! \geq (j/e)^j$, we get
    \begin{equation}
      \|g'-p'\|_{L_2(\varphi)}^2
      \lesssim M^2 \sum_{\ell\geq k} \pth{\frac{eM^2}{k}}^\ell
      \label{eq:gp-compact}
    \end{equation}
    completing the proof by summing the geometric series.

\section{Priors with exponential tails}
\label{sec:exp}

This section proves \prettyref{thm:ub-exp}, extending the regret bound in 
\prettyref{thm:ub-compact} for compactly supported priors to those in the class
$\calG_\alpha(\sigma)$ defined in \prettyref{eq:Galpha}, having exponential tails. Throughout this section, all constants (including those implicit in $\lesssim$) depend on $\alpha,\sigma$.

Note that one may apply \prettyref{thm:ub-compact} by approximating the true
prior with a compactly supported one.  This yields a worse logarithmic factor. For
example, for the subgaussian class, applying
\prettyref{thm:ub-compact} with $M \asymp \sqrt{\log \frac{1}{\epsilon}}$ yields an
$O\pr{\epsilon (\log \frac{1}{\epsilon})^2}$ upper bound, which is inferior to
\prettyref{thm:ub-exp}. Instead, we take a more direct approach.

    Similar to the proof of \prettyref{thm:ub-compact}, the first step is again to reduce  $\Regret(H\|G)$ to a weighted derivative quantity $\Delta$.
    Recall the definitions of $g$, $w$, $\delta$, and $\Delta$ in
    \eqref{eq:g}--\eqref{eq:Delta} from \prettyref{sec:compact}.

    \begin{prop}[Reduction to $\Delta$]
      \label{prop:exp-delta-reduction}
      For $H,G \in \mathcal{G}_\alpha(\sigma)$,
      \[
        \Regret(H\|G)
        \lesssim
        \Delta
        +
        \delta \pth{\log \frac{1}{\delta}}^{2/(\alpha\wedge 2)}.
      \]
    \end{prop}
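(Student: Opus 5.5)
We start from the exact decomposition \prettyref{eq:decomp} established in the proof of \prettyref{lmm:reduction}:
\[
\frac{f_G'}{f_G}-\frac{f_H'}{f_H} = (m_G+m_H)\frac{f_H-f_G}{f_G+f_H} + 2\,\frac{m_Gf_G-m_Hf_H}{f_G+f_H}.
\]
Squaring and integrating against $f_G\le f_G+f_H$, the second term contributes at most a constant times $\Delta$. The only change from the compact case is the first term, since $m_G,m_H$ are no longer bounded by $M$. We must therefore bound
\[
T:=\int (m_G+m_H)^2\frac{(f_G-f_H)^2}{f_G+f_H}.
\]

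For $T$, we split the real line at a threshold $R$ to be chosen of order $(\log\frac1\delta)^{1/\alpha}$ times a constant.

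\emph{Bulk $|y|\le R$.} We use a pointwise bound on the posterior mean for priors in $\calG_\alpha(\sigma)$. Since $f_G(y)\gtrsim\varphi(y)$ up to constants (Jensen, as in \prettyref{eq:w-phi}), the posterior of $\theta$ given $Y=y$ has density proportional to $G(d\theta)e^{\theta y-\theta^2/2}$. Comparing $e^{\theta y-\theta^2/2}$ with the prior tail $e^{-(|\theta|/\sigma)^\alpha}$ gives
\[
|m_G(y)|\lesssim 1+|y| \quad(\alpha\ge 2), \qquad |m_G(y)|\lesssim 1+|y|^{1/(\alpha-1)}\wedge|y| \quad(\alpha<2).
\]
A softer form suffices here: $|m_G(y)|\lesssim 1+|y|$ on all of $\reals$ when $\alpha\ge1$, and a polynomial bound in general. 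The sharper claim needed is
\[
|m_G(y)|\lesssim (\log\tfrac1\delta)^{1/(\alpha\wedge2)} \quad\text{on } |y|\le R.
\]
For $\alpha\ge2$ this follows from $|m|\lesssim 1+|y|\lesssim R\asymp(\log\frac1\delta)^{1/\alpha}$, which is even better than the exponent $1/2$. For $\alpha<2$ it follows from $|m|\lesssim |y|$ with $R\asymp(\log\frac1\delta)^{1/\alpha}$. With this bound, the bulk contributes $\lesssim \delta\,(\log\frac1\delta)^{2/(\alpha\wedge 2)}$.

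\emph{Tail $|y|>R$.} We bound $(f_G-f_H)^2/(f_G+f_H)\le f_G+f_H$, so the tail contributes at most
\[
\int_{|y|>R}(m_G+m_H)^2(f_G+f_H).
\]
We control this with the conditional form $\Expect_G[m_G(Y)^2\indc{|Y|>R}]\le \Expect_G[\theta^2\indc{|Y|>R}]$, which is Jensen applied to $m_G=\Expect[\theta|Y]$. We also need the cross terms $\Expect_{f_G}[m_H^2\indc{|Y|>R}]$; for these we use the pointwise polynomial bound on $m_H$ together with the tail of $f_G$. Since $Y=\theta+Z$ has $\calG_{\alpha\wedge2}$-type tails, Cauchy--Schwarz gives
\[
\Expect[\theta^2\indc{|Y|>R}]\lesssim \exp\bigl(-c(R/\sigma')^{\alpha\wedge2}\bigr).
\]
This is at most $\delta$ once $R=C(\log\frac1\delta)^{1/(\alpha\wedge2)}$ with $C$ large.

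Note that the bulk estimate uses $R$ of order $(\log\frac1\delta)^{1/(\alpha\wedge 2)}$ together with $|m|\lesssim 1+|y|$. That gives exactly $\delta R^2=\delta(\log\frac1\delta)^{2/(\alpha\wedge2)}$, consistent with the claimed bound. So the simplest route is to take the linear growth bound $|m_G(y)|\lesssim 1+|y|$ and $R\asymp(\log\frac1\delta)^{1/(\alpha\wedge2)}$.

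The main obstacle is the pointwise bound $|m_G(y)|\lesssim 1+|y|$ when $\alpha<1$, i.e.\ heavier than exponential tails. To handle it, we write $m_G(y)-y=f_G'/f_G$ and bound the score by comparing $\int|\theta-y|\varphi(y-\theta)G(d\theta)$ with $f_G(y)$. Concretely, we split $\theta$ into the regions $|\theta-y|\le 2|y|+C$ and its complement. On the first region the score is trivially $O(|y|)$. On the complement we use $f_G(y)\gtrsim e^{-y^2/2-c}$ (Jensen again) together with the prior tail, so that region's contribution is negligible relative to $f_G(y)$. If this fails for small $\alpha$, a fallback is to keep the tail integral in the form $\Expect_G[\theta^2\indc{|Y|>R}]$. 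That form requires no pointwise bound for $m_G$, and leaves only the cross term $\int m_H^2 f_G$ on the tail, which we would handle by symmetry using $f_G\le f_G+f_H$ and the analogous bound for $H$.
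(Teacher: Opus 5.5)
Your final route is correct and is essentially the paper's proof. It consists of the decomposition \eqref{eq:decomp}, the linear bound $|m_F(y)|\lesssim 1+|y|$ for $F\in\{G,H\}$, a split at $R\asymp(\log\frac1\delta)^{1/(\alpha\wedge2)}$ giving $R^2\delta$ on $|y|\le R$, and a Chernoff bound on $\int_{|y|>R}y^2(f_G+f_H)$; the paper gets the same linear bound for every $\alpha>0$ from \eqref{eq:V1}, via the Jiang--Zhang score inequality plus $f_F(y)\ge\frac12\varphi(|y|+a)$.

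Your splitting argument for that bound also works for all $\alpha$ without using the prior tail, provided you lower-bound $f_G$ via $G([-a,a])\ge\frac12$ rather than by $f_G\gtrsim\varphi$, which needs $G$ centered. So the $\alpha<1$ ``obstacle,'' the fallback, and the early choice $R\asymp(\log\frac1\delta)^{1/\alpha}$ for $\alpha\ge2$ can all be dropped; the fallback would not avoid a pointwise bound anyway, since the bulk and the cross term $\int_{|y|>R}m_H^2f_G$ still need one.
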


    \begin{proof}
    By the
      same decomposition \prettyref{eq:decomp} in  \prettyref{lmm:reduction},
      \[
        \Regret(H\|G)
        \lesssim
        \int_\reals (m_G+m_H)^2 \frac{(f_G-f_H)^2}{f_G+f_H}
        +
        \int_\reals \frac{(m_Gf_G-m_Hf_H)^2}{f_G+f_H}.
      \]
      By \eqref{eq:Delta}, the second term is $\Delta/2$, so
      \[
        \Regret(H\|G)
        \lesssim
        \int_\reals (m_G+m_H)^2 \frac{(f_G-f_H)^2}{f_G+f_H}
        + \Delta.
      \]

      To bound the posterior mean, we apply \eqref{eq:V1} from \prettyref{lem:Vpp-phi2-over-f},
      with $a:=\sigma(\log 4)^{1/\alpha}$ satisfying $F([-a,a])\geq 1/2$ for  $F\in\calG_\alpha(\sigma)$ using Markov's inequality,
      obtaining
      \[
        |m_F(y)| \leq C(1+|y|),
        \qquad
        y\in\reals. \numberthis \label{eq:posterior-mean-bound}
      \]
Fix $T>1$ to be specified next. Then $
        |m_G(y)|+|m_H(y)|
        \lesssim
        T$ for any $|y|\leq T$,
      so
      \[
        \int_{|y|\leq T} (m_G+m_H)^2 \frac{(f_G-f_H)^2}{f_G+f_H}
        \lesssim
        T^2 \delta.
      \]
      For $|y| > T$, using
      $
        \frac{(f_G-f_H)^2}{f_G+f_H} \leq f_G+f_H$
      and \eqref{eq:posterior-mean-bound} again we obtain
      \[
        (m_G(y)+m_H(y))^2 (f_G(y)+f_H(y))
        \lesssim
        y^2(f_G(y)+f_H(y)).
      \]
      Thus
      \[
        \Regret(H\|G)
        \lesssim
        T^2 \delta + \Delta + \int_{|y|>T} y^2(f_G+f_H).
      \]
Since $G\in \calG_\alpha(\sigma)$ and $f_G = G * N(0,1)$, by a Chernoff bound, the tail contribution is at most
\[
\int_{|y|>T} y^2 f_G \lesssim T^2 \exp(-c T^{\alpha \wedge 2})
\]
for some constant $c$, and the same holds for $f_H$. Finally, choosing \[
        T:=C \pth{\log\pth{1+\frac{1}{\delta}}}^{1/(\alpha\wedge 2)},
      \]
      for appropriately large $C$,
      we have
     $ \int_{|y|>T} y^2(f_G+f_H) \leq \delta^2$, completing the proof.
\end{proof}

    The next lemma is an analogue of the polynomial approximation in \prettyref{lmm:poly-f}.

    \begin{lemma}
      \label{lmm:exp-poly}
      Let $g = \frac{f_G-f_H}{\varphi}$
      be given in \prettyref{eq:g}.
      Assume that $\nu=(G+H)/2$ has mean zero for $G, H \in \mathcal G_\alpha(\sigma)$. Then there exist constants
      $c,C, k_0>0$, depending only on $\alpha,\sigma$, such that for every $k\geq k_0$
      there is a degree-$k$ polynomial $p_k$ with
      \[
        \|g-p_k\|_{L_2(w)}^2
        \vee
        \|g'-p_k'\|_{L_2(w)}^2
        \leq
        \begin{cases}
          C \exp\pth{-c k^{\alpha/2}},
          & \alpha\leq 2,\\
          C \exp\pth{-c k \log k},
          & \alpha>2.
        \end{cases}
      \]
    \end{lemma}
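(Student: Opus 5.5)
The plan is to avoid expanding $g$ itself in Hermite polynomials: for $\alpha\le 2$ the moments of $G\in\calG_\alpha(\sigma)$ grow like $(Cj)^{j/\alpha}$, so the coefficients $(m_j(G)-m_j(H))^2/(j-1)!$ need not even be summable. Instead, I will split each prior into a compactly supported part and a tail, apply the Hermite truncation of \prettyref{lmm:poly-f} only to the compact part, and show that the tail contributes a small error in $L_2(w)$ directly. The weight comparison still holds: since $\nu$ is centered with second moment $\lesssim_{\alpha,\sigma}1$, the Jensen argument behind \prettyref{eq:w-phi} again gives $w\le e^{C\sigma^2}\varphi$.

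\textbf{Step 1: split the priors.} Fix a level $K$ to be chosen. Write $G=G_K+G_K^c$ and $H=H_K+H_K^c$, where $G_K=G|_{[-K,K]}$ and $G_K^c=G|_{[-K,K]^c}$ are sub-probability measures, and similarly for $H$. Correspondingly $g=g_K+g_K^c$, with $g_K=(f_{G_K}-f_{H_K})/\varphi$, since $f_G=f_{G_K}+f_{G_K^c}$ by linearity.

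\textbf{Step 2: polynomial approximation of the compact part.} The Hermite expansion in the proof of \prettyref{lmm:poly-f} is linear in the measure. The moments of $G_K$ and $H_K$ satisfy $|m_j|\le K^j$ because their total masses are at most $1$. Hence the degree-$k$ truncation $p_k$ of $g_K$ satisfies
\[
\|g_K-p_k\|_{L_2(w)}^2\vee\|g_K'-p_k'\|_{L_2(w)}^2\lesssim K^2\pth{\frac{eK^2}{k}}^k
\quad\text{for } k\ge 2eK^2 .
\]
Here the passage from $L_2(\varphi)$ to $L_2(w)$ uses $w\lesssim\varphi$.

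\textbf{Step 3: the tail part, directly in $L_2(w)$.} Write $T=G_K^c$; the same argument applies to $H_K^c$. Since $f\ge f_G/2\ge f_T/2$,
\[
\int \frac{f_T^2}{\varphi^2}w=\int\frac{f_T^2}{f}\le 2\int f_T=2T(\reals).
\]
For the derivative, $(f_T/\varphi)'=(f_T'+yf_T)/\varphi$, and $f_T'(y)+yf_T(y)=\int u\,\varphi(y-u)\,dT(u)$. By Cauchy--Schwarz, $\bigl(\int u\varphi(y-u)dT\bigr)^2\le f_{u^2T}(y)f_T(y)$, where $u^2T$ denotes the measure with density $u^2$ with respect to $T$. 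Combined with $f\ge f_T/2$, this gives
\[
\int ((g_K^c)')^2w\le 2\int_{|u|>K}u^2\,d(G+H)(u).
\]
By the defining tail condition of $\calG_\alpha(\sigma)$ and Markov's inequality, both tail terms are $\lesssim\exp(-c(K/\sigma)^\alpha)$. Combining with Step 2 via the triangle inequality,
\[
\|g-p_k\|_{L_2(w)}^2\vee\|g'-p_k'\|_{L_2(w)}^2\lesssim K^2\pth{\frac{eK^2}{k}}^k+e^{-c(K/\sigma)^\alpha}.
\]

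\textbf{Step 4: choose $K$.}
\begin{itemize}
\item For $\alpha\le2$, take $K^2=k/e^2$. The first term is $\lesssim k\,e^{-k}$ and the second is $\exp(-ck^{\alpha/2})$, which dominates since $\alpha/2\le1$.
\item For $\alpha>2$, take $K=(Ck\log k)^{1/\alpha}$, so that the tail term is $\exp(-c'k\log k)$. Then $K^2\asymp(k\log k)^{2/\alpha}$, so $\log(k/(eK^2))\ge(1-2/\alpha-o(1))\log k$, and the first term is also $\exp(-c''k\log k)$.
\end{itemize}
In both cases the condition $k\ge 2eK^2$ holds once $k\ge k_0(\alpha,\sigma)$.

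I expect the main obstacle to be Step 3: controlling the derivative of the tail part in $L_2(w)$ without any polynomial structure. The Cauchy--Schwarz identity for $f_T'+yf_T$, together with the domination $f\ge f_T/2$, is what makes this work. The remaining steps are bookkeeping, including checking that the constants entering $w\lesssim\varphi$ depend only on $(\alpha,\sigma)$.
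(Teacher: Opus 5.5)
Your proposal is correct and follows essentially the same route as the paper. Both arguments split $G,H$ at a level of order $\sqrt{k}$ (resp.\ $(k\log k)^{1/\alpha}$), apply the Hermite truncation of Lemma~\ref{lmm:poly-f} to the compact part via $w\lesssim\varphi$, and bound the tail directly in $L_2(w)$ using $f\ge f_G/2$ together with Cauchy--Schwarz; your bound on $\bigl(\int u\,\varphi(y-u)\,dT(u)\bigr)^2$ is the paper's tilted-measure $\pi_y$ argument written without normalization, and the factor $2$ versus $4$ in the tail constant is immaterial.
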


    Finally, the following result is an analogue of \prettyref{lmm:bernstein}.
    The proof is deferred to the end of \prettyref{app:bernstein}.

    \begin{prop}
      \label{prop:exp-bernstein}
      Assume that $\nu=(G+H)/2$ has mean zero for $G, H \in \mathcal G_\alpha(\sigma)$. Then every degree-$k$ polynomial
      $p$ with $k\geq 2$ satisfies
      \[
        \|p'\|_{L_2(w)}
        \lesssim
        \sqrt{k}\,\log k\,\|p\|_{L_2(w)}.
      \]
    \end{prop}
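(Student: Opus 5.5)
We follow the linear-algebraic proof of \prettyref{lmm:bernstein}. Let $\{q_j\}$ be the orthonormal polynomials for $w=\varphi^2/f_\nu$, and let $L$ be the matrix of differentiation on degree-$\le k$ polynomials. The adjoint identity \prettyref{eq:adjoint} still applies, because $w$ has a Gaussian-type tail: by Jensen, $f_\nu\ge e^{-\sigma'^2/2}\varphi$, where $\sigma'^2$ is the second moment of $\nu$, finite for $\calG_\alpha(\sigma)$. As before $V'(y)=y+m_\nu(y)$ and $L=A+B$. The superdiagonal part $B$ is handled exactly as earlier: $a_j(a_j+\beta_j)=j$ with $\beta_j=\Iprod{q_j}{q_{j-1}m_\nu}$, so $a_j\le \sqrt{j}+|\beta_j|$. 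The only difference from the compact case is that $m_\nu$ is no longer bounded. By \prettyref{eq:posterior-mean-bound} we only have $|m_\nu(y)|\le C(1+|y|)$ (applied to $\nu\in\calG_\alpha(\sigma')$ for a slightly larger constant). The whole task is to replace $\|m_\nu\|_\infty\le M$ by an effective bound of order $\log k$ on polynomials of degree $\le k$.

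\textbf{Step 1: reduce to a bound on $m_\nu$ against polynomials.} For the dense part, $\|A\|_F^2\le\sum_{j\le k}\|q_j m_\nu\|_{L_2(w)}^2$, and $|\beta_j|\le \|q_jm_\nu\|_{L_2(w)}$. It therefore suffices to show
\[
\int p^2 m_\nu^2\, w \lesssim (\log k)^2 \int p^2 w\qquad\text{for all } \deg p\le k.
\]
This gives $\|A\|_{\op}\lesssim\sqrt{k}\log k$ and $\|B\|_{\op}\lesssim\sqrt k+\log k$.

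\textbf{Step 2: split the integral at a threshold $T$.} On $|y|\le T$, use the pointwise bound $|m_\nu|\lesssim 1+T$. On $|y|>T$, use a Nikolskii/Mhaskar--Saff-type restricted range inequality for the Gaussian-like weight $w$: for $\deg p\le k$, the mass $\int_{|y|>T}p^2y^2w$ is at most $e^{-c T^2}$ times $\int p^2 w$ once $T\ge C\sqrt{k}$. With $T\asymp\sqrt k$ this alone only yields $\sqrt k$, which would give a Bernstein constant $O(k)$. That is too crude, so the real work is to get the logarithmic bound.

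\textbf{Step 3: a sharper growth bound for $m_\nu$ (main obstacle).} The key is that $m_\nu$ is much smaller than $|y|$ in the bulk. Since $\nu$ has tail $\nu(|u|>t)\le 2e^{-(t/\sigma)^\alpha}$, the posterior concentrates: for $|y|\le R$, the posterior mass on $\{|u|>s\}$ is exponentially small once $s$ exceeds a scale comparable to $(\log R)^{1/\alpha}$-type quantities or $R$ itself. This gives roughly $|m_\nu(y)|\lesssim 1+\min(|y|,\ (\log(2+|y|))^{\cdots})$. I would first try to prove the cleaner statement $|m_\nu(y)|\lesssim \log(2+|y|)$ for $|y|\le \sqrt{k}$ (or at least on the bulk region), perhaps only after also discarding the set where $f_\nu$ is atypically small, and possibly at the cost of weakening $\log k$ in the claim to the logarithmic power appearing later. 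This bounds $\int_{|y|\le C\sqrt k} p^2m_\nu^2w$ by $(\log k)^2\|p\|^2$. The region $|y|>C\sqrt{k}$ is then absorbed by the restricted range inequality. For this I would prove the restricted range inequality directly by comparison with $\varphi$: it holds for $\varphi^2/f_\nu\le e^{\sigma'^2/2}\varphi$ above, while below $w\ge \varphi^2/\sup f\gtrsim\varphi^2$, so both sides reduce to Hermite-type estimates. Assembling the bounds gives $\|L\|_{\op}\lesssim\sqrt k\log k$.
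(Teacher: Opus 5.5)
There is a genuine gap. The inequality your Step 1 reduces to, $\int p^2 m_\nu^2 w\lesssim(\log k)^2\int p^2 w$ for all $\deg p\le k$, is false in this class, and so is the pointwise heuristic $|m_\nu(y)|\lesssim\log(2+|y|)$ in Step 3.

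Take $G=H=N(0,s^2)$ with $s$ small enough to lie in $\calG_\alpha(\sigma)$ (possible for $\alpha\le 2$). Then $f_\nu$ is the $N(0,1+s^2)$ density and $m_\nu(y)=\frac{s^2}{1+s^2}\,y$ is exactly linear. The weight $w=\varphi^2/f_\nu$ is Gaussian, so $p=q_k$ gives $\int q_k^2m_\nu^2w\asymp\|yq_k\|_{L_2(w)}^2\asymp k$.

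The posterior does not concentrate at scale $(\log|y|)^{1/\alpha}$; the tilt $e^{uy}$ pushes it to a scale polynomial in $|y|$. A Laplace prior gives $m_\nu(y)=y+O(1)$. For $\alpha>2$, a prior density $\propto e^{-|u/t|^{\beta}}$ with $\beta>\alpha$ gives $|m_\nu(y)|\asymp|y|^{1/(\beta-1)}$.

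The Frobenius route for $A$ is also intrinsically lossy once $m_\nu$ is unbounded. In the Gaussian example, $A$ is $\frac{s^2}{1+s^2}$ times the superdiagonal of the Jacobi matrix, so $\|A\|_F\asymp k$ while $\|A\|_{\op}\asymp\sqrt{k}$. No threshold $T$ or restricted-range estimate repairs this. With only $|m_\nu|\lesssim 1+|y|$ available, the Frobenius bound yields at best $\|L\|_{\op}\lesssim k$.

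The missing idea is that the $\log k$ is the price of triangular truncation, not a smallness property of $m_\nu$. By \prettyref{eq:adjoint}, the symmetric matrix $S=(\Iprod{q_i}{V'q_j})_{0\le i,j\le k}$ has zero diagonal, since $\Iprod{q_i}{V'q_i}=2\Iprod{q_i'}{q_i}=0$. It also satisfies $S=L+L^\top$, so $L$ is the strictly upper-triangular part of $S$ and $\|L\|_{\op}\lesssim\log(k+1)\,\|S\|_{\op}$ \cite{Bhatia2000}.

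Since $S$ represents multiplication by $V'$ compressed to $\calP_k$, you have $\|S\|_{\op}\le\sup_{0\ne p\in\calP_k}\|V'p\|_{L_2(w)}/\|p\|_{L_2(w)}$. The crude bound $|V'|\lesssim 1+|y|$ then reduces everything to showing that multiplication by $y$ has norm $O(\sqrt k)$ on $\calP_k$.

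That bound comes from the Jacobi matrix. Its $j$th row has squared $\ell_2$ norm $\|yq_j\|_{L_2(w)}^2=\int y^2q_j^2w$. Integration by parts gives $\int yV'q_j^2w=2j+1$. Combined with $yV'(y)\ge y^2-|V'(0)|\,|y|$, which follows from $V''=1+\Var(U\mid Y=y)\ge 1$, this yields $\int y^2q_j^2w=O(j)$. This convexity input $V''\ge1$ never appears in your proposal, and it is exactly what delivers the $O(\sqrt k)$ bound.

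The same fix works inside your own decomposition. $A$ is the strictly upper-triangular part of the symmetric matrix $(\Iprod{q_i}{m_\nu q_j})$, whose norm is $O(\sqrt k)$, so $\|A\|_{\op}\lesssim\sqrt k\log k$. For $B$, $|\beta_j|\le\|m_\nu q_{j-1}\|_{L_2(w)}\lesssim\sqrt j$ already gives $\|B\|_{\op}\lesssim\sqrt k$.
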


    Putting these together, we prove \prettyref{thm:ub-exp}.

  \begin{proof}[Proof of \prettyref{thm:ub-exp}]
    Since
    \[
      \epsilon^2
      =
      \int_\reals \frac{(f_G-f_H)^2}{(\sqrt{f_G}+\sqrt{f_H})^2}
      \asymp
      2\int_\reals \frac{(f_G-f_H)^2}{f_G+f_H}
      =
      \delta = \|g\|_{L_2
      (w)}^2,
    \]
    it suffices to work with $\delta$. As in the proof of \prettyref{thm:ub-compact}, it is without loss of generality to assume
    $
      \delta\leq e^{-e},
    $
    and
    $\nu=(G+H)/2$ is mean-zero by inflating the constant $\sigma$.

    By \prettyref{prop:exp-delta-reduction}, it remains to bound $\Delta=\|g'\|_{L_2
      (w)}^2$.
    Let $p_k$ be the polynomial from \prettyref{lmm:exp-poly}. By
    \prettyref{prop:exp-bernstein},
    \[
      \|g'\|_{L_2(w)}
      \leq
      \|g'-p_k'\|_{L_2(w)}
      +
      C_{\alpha,\sigma}\sqrt{k+1}\log k
      \pth{
        \|g\|_{L_2(w)}+\|g-p_k\|_{L_2(w)}
      }.
    \]
    That is,
    \[
      \Delta
      \lesssim
      k\log^2 k\,\bigl(\delta+E_k\bigr),
    \]
    where $E_k
      :=
      \|g-p_k\|_{L_2(w)}^2
      \vee
      \|g'-p_k'\|_{L_2(w)}^2$.
      Applying  \prettyref{lmm:exp-poly}, setting
    \[
      k=
      \begin{cases}
      \Big\lceil
        C_{\alpha,\sigma}
        \log^{2/\alpha}\pth{\frac{e}{\delta}}
      \Big\rceil & \alpha\leq 2\\
      \Big\lceil
        C_{\alpha,\sigma}
        \frac{\log\pth{\frac{e}{\delta}}}
        {\log\pth{e+\log\pth{\frac{e}{\delta}}}}
      \Big\rceil & \alpha > 2\\
      \end{cases}
    \]
    with suitably large $C_{\alpha,\sigma}$ ensures
    $E_k\leq \delta$ and completes the proof.
  \end{proof}

\begin{proof}[Proof of \prettyref{lmm:exp-poly}]
Let
  \[
    L:=
    \begin{cases}
      c \sqrt{k},
      & \alpha\leq 2,\\
      c \bigl(k\log k\bigr)^{1/\alpha},
      & \alpha>2,
    \end{cases}
  \]
  where $c>0$ is a sufficiently small constant depending on $\alpha$.
  Let $k$ be sufficiently large so that $L\geq 1$.

  Write
  \[
    G=G_{\leq L}+G_{>L},
    \qquad
    H=H_{\leq L}+H_{>L},
  \]
  where $G_{\leq L}$ and $H_{\leq L}$ restrict $G$ and $H$ to $[-L,L]$, and set
  \[
    g_{\leq L}
    :=
    \frac{f_{G_{\leq L}}-f_{H_{\leq L}}}{\varphi},
    \qquad
    g_{>L}
    :=
    \frac{f_{G_{>L}}-f_{H_{>L}}}{\varphi}.
  \]
  Then $
    g=g_{\leq L}+g_{>L}$.

  For the compactly supported part $g_{\le L}$, the argument is identical to that of 
  \prettyref{lmm:poly-f}. Since $\nu$ is centered and belongs to $\calG_\alpha(\sigma)$, its second
  moment is at most a constant depending only on $\alpha,\sigma$.
  Applying \prettyref{eq:w-phi}, we have $w \lesssim \varphi$ everywhere.
Thus \prettyref{eq:gp-compact} yields a degree-$k$ polynomial $p_k$ such that
  \[
    \|g_{\leq L}-p_k\|_{L_2(w)}^2
    \vee
    \|g_{\leq L}'-p_k'\|_{L_2(w)}^2
    \lesssim
    L^2 \pth{\frac{eL^2}{k}}^k
  \]
  provided that $k \geq 2e L^2$.
  By the choice of $L$,
  \[
    L^2 \pth{\frac{eL^2}{k}}^k
    \leq
    \begin{cases}
      C\exp(-c k),
      & \alpha\leq 2,\\
      C\exp\pth{-c k\log k},
      & \alpha>2.
    \end{cases}
  \]

  It remains to control the tail part $g_{\ge L}$. Unlike the proof of 
  \prettyref{lmm:poly-f}, we cannot use $w \lesssim \varphi$ and bound $L_2(\varphi)$, because $\|g\|_{L_2(\varphi)}$ may be infinite (e.g.~for Gaussian $G$ with sufficiently large variance.)
  Instead, we directly control the tail in $L_2(w)$, where $w=\frac{\varphi^2}{f}$. Since
  $f=\frac{f_G+f_H}{2}$, we have
  \begin{align*}
    \|g_{>L}\|_{L_2(w)}^2
    &\leq
    2 \int \frac{f_{G_{>L}}(y)^2}{f(y)}\,dy
    +
    2 \int \frac{f_{H_{>L}}(y)^2}{f(y)}\,dy\\
    &\leq
    4 \int \frac{f_{G_{>L}}(y)^2}{f_G(y)}\,dy
    +
    4 \int \frac{f_{H_{>L}}(y)^2}{f_H(y)}\,dy\\
    &\leq
    4 \int f_{G_{>L}}(y)\,dy
    +
    4 \int f_{H_{>L}}(y)\,dy\\
    &=
    4 G(|U|>L)+4 H(|U|>L) \lesssim
    \exp(-c L^\alpha)
  \end{align*}
  the last step applying Chernoff's bound.

  For the derivative, note that
  \[
   \left(\frac{f_{G_{>L}}}{\varphi}\right)(y)
  = \int \exp\pth{yu-\frac{u^2}{2}}\,G_{>L}(du), \qquad
   \left(\frac{f_{G_{>L}}}{\varphi}\right)'(y)
  = \int u \exp\pth{yu-\frac{u^2}{2}}\,G_{>L}(du).
  \]
  Thus
  \[
  \left(\frac{f_{G_{>L}}}{\varphi}\right)'(y)
  =
  \frac{f_{G_{>L}}(y)}{\varphi(y)}
  \int u \pi_y(du), \quad
  \pi_y(du) := \frac{ \exp\pth{yu-\frac{u^2}{2}}
G_{>L}(du)
  }{\int \exp\pth{yu-\frac{u^2}{2}}\,G_{>L}(du)}
  \]
  where $\pi_y$ is a probability measure. Thus Cauchy-Schwarz gives
  \[
    \left(\frac{f_{G_{>L}}}{\varphi}\right)'(y)^2
    \leq
    \frac{f_{G_{>L}}(y)}{\varphi(y)}
    \int u^2 \exp\pth{yu-\frac{u^2}{2}}\,G_{>L}(du),
  \]
  and similarly for $H_{>L}$. Therefore
  \begin{align*}
    \|g_{>L}'\|_{L_2(w)}^2
    &\leq
    2 \int \left(\frac{f_{G_{>L}}}{\varphi}\right)'(y)^2
    \frac{\varphi(y)^2}{f(y)}\,dy
    +
    2 \int \left(\frac{f_{H_{>L}}}{\varphi}\right)'(y)^2
    \frac{\varphi(y)^2}{f(y)}\,dy\\
    &\leq
    4 \int \left(\frac{f_{G_{>L}}}{\varphi}\right)'(y)^2
    \frac{\varphi(y)^2}{f_G(y)}\,dy
    +
    4 \int \left(\frac{f_{H_{>L}}}{\varphi}\right)'(y)^2
    \frac{\varphi(y)^2}{f_H(y)}\,dy\\
    &\leq
    4 \int_{|u|>L} u^2\,G(du)
    +
    4 \int_{|u|>L} u^2\,H(du) \lesssim
    L^2 \exp(-c L^\alpha),
  \end{align*}
  the last step again applying Chernoff's bound.

  By the choice of $L$, the tail part satisfies
  \[
    \|g_{>L}\|_{L_2(w)}^2
    \vee
    \|g_{>L}'\|_{L_2(w)}^2
    \leq
    \begin{cases}
      C' \exp(-c k^{\alpha/2}),
      & \alpha\leq 2,\\
      C' \exp\pth{-c k\log k},
      & \alpha>2.
    \end{cases}
  \]
  Finally, the proof is completed with
  \[
    \|g-p_k\|_{L_2(w)}^2
    \leq
    2\|g_{\leq L}-p_k\|_{L_2(w)}^2
    +
    2\|g_{>L}\|_{L_2(w)}^2,
  \]
  and similarly for the derivative.
\end{proof}

    \appendix

    \section{Proof of \prettyref{thm:lb}}
\label{app:lb}
\begin{proof}
  Let $\nu$ denote the arcsine law on $[-1,1]$:
  \[
d\nu(x)=\frac{\indc{x \in [-1,1]}}{\pi \sqrt{1-x^2}}\,dx.
  \]
  Let $\nu_m$ be the $m$-point Gauss quadrature of $\nu$, supported on the Chebyshev nodes
  \[
    x_{m,j}:=\cos\frac{(2j-1)\pi}{2m},\qquad 1\le j\le m.
  \]
  These are roots of the degree-$m$ Chebyshev polynomial
      $T_m(x) := \cos(m\arccos(x))$
      which are orthogonal under $\nu$.
 Then $\nu_m$ and $\nu$ match moments through order $2m-1$, i.e.,
  \begin{equation}
    \int p(x)\,\nu(dx)=\int p(x)\,\nu_m(dx)
    \qquad
    \text{for every polynomial $p$ with $\deg p\le 2m-1$.}
    \label{eq:quad-exact}
  \end{equation}
  Define
  \[
    G_m:=(1-\tau_m)\delta_0+\tau_m \nu,
    \qquad
    H_m:=(1-\tau_m)\delta_0+\tau_m \nu_m,
  \]
  where $\tau_m\in(0,1/2)$ is a vanishing sequence to be specified later.

In order to bound the Hellinger distance and the regret, define
  \[
    U_m(y):=\frac{f_\nu(y)-f_{\nu_m}(y)}{2\varphi(y)}
    =
    \frac12\int e^{uy-u^2/2}\,(\nu-\nu_m)(du), \qquad
    V_m(y):=\frac{f_\nu(y)+f_{\nu_m}(y)}{2\varphi(y)}-1.
  \]
  and
  \[
  \alpha_m :=  \int U_m(y)^2\varphi(y)\,dy,
  \quad
   \beta_m:=\int U_m'(y)^2\varphi(y)\,dy
  \]
Then
  \[    f_{G_m}=\varphi\bigl(1+\tau_m(V_m+U_m)\bigr),
    \qquad
    f_{H_m}=\varphi\bigl(1+\tau_m(V_m-U_m)\bigr).
  \]
   Since $\nu$ and $\nu_m$ are probability measures on $[-1,1]$, for every $y\in\reals$,
  \begin{equation}
    |U_m'(y)|\le e^{|y|},
    \qquad
    |V_m(y)|\le e^{|y|}+1.
    \label{eq:UV-bounds}
  \end{equation}

  The Hellinger distance may be bounded from above as follows: By construction, we have $f_{G_m},f_{H_m}\ge \varphi/2$ and
  \begin{equation}
    \hellinger^2(f_{G_m},f_{H_m})
    =
    \int \frac{(f_{G_m}-f_{H_m})^2}{(\sqrt{f_{G_m}}+\sqrt{f_{H_m}})^2}
    \le
    \int \frac{(f_{G_m}-f_{H_m})^2}{\varphi}
    = \tau_m^2 \int \frac{(f_{\nu}-f_{\nu_m})^2}{\varphi}
    =
    4\tau_m^2 \alpha_m.
      \label{eq:H2-upper}
  \end{equation}

For the regret, applying the decomposition \prettyref{eq:decomp} in the proof of
\prettyref{lmm:reduction} and \prettyref{eq:Delta}, we get the lower bound
\[
\Regret(H_m\|G_m)
\geq c \tau_m^2 \|U_m'\|_{L_2(w)}^2
-C \hellinger^2(f_{G_m},f_{H_m}),
\]
for some absolute constants $c,C$, where $w$ is given by \prettyref{eq:w}, namely
\[
w = \frac{\varphi^2}{(f_{G_m}+f_{H_m})/2}
= \frac{\varphi}{1+\tau_m V_m}.
\]
Then
\[
 \|U_m'\|_{L_2(w)}^2
 \geq  \underbrace{\|U_m'\|_{L_2(\varphi)}^2}_{\beta_m}
 -  \tau_m \int (U_m')^2 \frac{\varphi V_m}{1+\tau_m V_m}.
\]
Applying $1+\tau_m V_m\geq 1-\tau_m \geq 1/2$ and \prettyref{eq:UV-bounds},
we get
\[
\int (U_m')^2 \frac{\varphi V_m}{1+\tau_m V_m}
\leq 4 \int e^{3|y|} \varphi(y)
\]
which is an absolute constant.
In all, we get
\begin{equation}
\Regret(H_m\|G_m) \geq c\tau_m^2 \beta_m - C \tau_m^3 - C \hellinger^2(f_{G_m},f_{H_m}).
    \label{eq:reg-lower}
\end{equation}

Next, we relate $\alpha_m,\beta_m$ to the moment difference. For $j\ge 0$, write
  \[
    \Delta_{m,j}:=\int x^j\,(\nu-\nu_m)(dx).
  \]
  By \eqref{eq:quad-exact}, $
    \Delta_{m,j}=0,\qquad 0\le j\le 2m-1$.
  Moreover, using the leading coefficient of $T_m$, we have
  \[
    x^{2m}=2^{1-2m}T_{2m}(x)+q_{2m-1}(x)
  \]
  for some polynomial $q_{2m-1}$ of degree at most $2m-1$. Then
  \[
    \Delta_{m,2m}
    =
    2^{1-2m}
    \left(
      \int T_{2m}(x)\,\nu(dx)-\int T_{2m}(x)\,\nu_m(dx)
    \right).
  \]
  Now $\int T_{2m}\,d\nu=0$, while $T_{2m}(x_{m,j})=\cos((2j-1)\pi)=-1$, hence $
    \int T_{2m}(x)\,\nu_m(dx)=-1$,
  and therefore
  \begin{equation}
    \Delta_{m,2m}=2^{1-2m}.
    \label{eq:first-gap}
  \end{equation}
  Also, since $\nu$ and $\nu_m$ are probability measures supported on $[-1,1]$,
  \begin{equation}
    |\Delta_{m,j}|\le 2,\qquad j\ge 1.
    \label{eq:moment-bound}
  \end{equation}

  Expanding in Hermite polynomials and using $H_j'=j H_{j-1}$, we have
  \[
    U_m(y)
    =
    \frac12\sum_{j\ge 2m}\Delta_{m,j}\frac{H_j(y)}{j!},
    \quad
    U_m'(y)
    =
    \frac12\sum_{j\ge 2m}\Delta_{m,j}\frac{H_{j-1}(y)}{(j-1)!}.
  \]
  Hence, by Hermite orthogonality,
  \begin{align*}
          \alpha_m&=\int U_m(y)^2\varphi(y)\,dy
    =
    \frac14\sum_{j\ge 2m}\frac{\Delta_{m,j}^2}{j!}\\
    \beta_m&=\int U_m'(y)^2\varphi(y)\,dy
    =
    \frac14\sum_{j\ge 2m}\frac{\Delta_{m,j}^2}{(j-1)!}.  \end{align*}
  In particular,
  \begin{equation}
    \beta_m\ge 2m\,\alpha_m.
    \label{eq:beta-lb}
  \end{equation}
  By \eqref{eq:first-gap} and \eqref{eq:moment-bound},
  \begin{equation}
    \frac{2^{-4m}}{(2m)!}
    \le
    \alpha_m
    \le
    \sum_{j\ge 2m}\frac{1}{j!}
    \le
    \frac{2}{(2m)!}
    \qquad \text{for all large }m.
    \label{eq:alpha-bounds}
  \end{equation}
  Thus $\alpha_m\to 0$, and
  \[
  m \geq c_0 \frac{\log \frac{1}{\alpha_m}}{\log\log \frac{1}{\alpha_m}}
    \]
    for some constant $c_0$.

  Now choose $\tau_m:=\alpha_m^2$ (so as to make the middle term in \prettyref{eq:reg-lower} negligible.)
  We have from \prettyref{eq:H2-upper}
  \[
   \epsilon_m^2 := \hellinger^2(f_{G_m},f_{H_m})
    \leq
    4 \alpha_m^5.
  \]
  Therefore
  \begin{equation}
    \hellinger^2(f_{G_m},f_{H_m})\le 4\alpha_m^5.
    \label{eq:H-upper}
  \end{equation}
and from \prettyref{eq:reg-lower} and \prettyref{eq:beta-lb} we have
\[
\Regret(H_m\|G_m)
\geq c_1
\alpha_m^5
 \frac{\log \frac{1}{\alpha_m}}{\log\log \frac{1}{\alpha_m}}
 \geq c_2 \epsilon_m^2
 \frac{\log \frac{1}{\epsilon_m}}{\log\log \frac{1}{\epsilon_m}}.
\]
  This proves the theorem.
\end{proof}

    \section{Bernstein inequality for general weights}

\label{app:bernstein}
Recall the  Bernstein-type inequality in 
\prettyref{lmm:bernstein} for weight $w = \frac{\varphi^2}{f}$, where $f$ is 
Gaussian convolution of a compactly supported measure.
In this section
we significantly extend this result and prove \prettyref{prop:exp-bernstein} for distributions with exponential tails as defined by \prettyref{eq:Galpha}.

We first state a more general result.     Let
    \[
      w(y)=e^{-V(y)}
    \]
    be a strictly positive weight function on $\mathbb R$, with finite moments of all orders.

    \begin{lemma}\label{lem:diff-op-bound}
      Assume that
      \begin{equation}\label{eq:Vp-linear-bound}
        |V'(y)|\le C_1(1+|y|),\qquad y\in\mathbb R,
      \end{equation}
      and
      \begin{equation}\label{eq:Vpp-lower-bound}
        V''(y)\ge C_2,\qquad y\in\mathbb R,
      \end{equation}
      for some constants $C_1,C_2>0$.

      Then, for every integer $k\ge 1$ and every polynomial $p$ of degree at most $k$,
      \[
        \|p'\|_{L_2(w)}
        \le
        C\,\sqrt{k}\,\log (k+1)\,\|p\|_{L_2(w)},
      \]
      where $C>0$ depends only on $C_1,C_2$.
    \end{lemma}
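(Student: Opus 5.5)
The plan is to rerun the linear-algebraic argument of \prettyref{lmm:bernstein}. The new difficulty is that $V'$ is no longer a bounded perturbation of a linear function. Let $\{q_j\}$ be the orthonormal polynomials for $w$ and $L$ the $(k+1)\times(k+1)$ matrix of differentiation on polynomials of degree at most $k$. By \prettyref{eq:adjoint}, exactly as before, $L_{ij}=\Iprod{q_i}{q_jV'}$ for $i<j$ and $L_{ij}=0$ otherwise. So $L=\mathcal T(M)$, where $M_{ij}=\Iprod{q_i}{q_jV'}$ ($0\le i,j\le k$) is the compressed multiplication-by-$V'$ operator and $\mathcal T$ keeps the strictly upper-triangular part. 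It suffices to show
\[
\|L\|_{\op}\le C\log(k+1)\,\|M\|_{\op},\qquad \|M\|_{\op}\le C\sqrt{k}.
\]

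\textbf{The $\log k$ factor.} I would use the classical bound on the norm of triangular truncation: for any $(k+1)\times(k+1)$ matrix $A$, $\|\mathcal T(A)\|_{\op}\le C\log(k+1)\|A\|_{\op}$. This is the only place a logarithm enters, and it explains the $\log k$ in the statement. If I want to avoid quoting it, I would use the fact that $V'$ is nondecreasing (since $V''\ge C_2>0$). Writing $V'(y)=V'(-R)+\int_{-R}^{R}\indc{y>t}\,dV'(t)$ on $[-R,R]$ and treating the tails separately, one could try to bound the truncation of each half-line indicator (a contraction) uniformly. I expect the direct citation to be cleaner, so I will use it.

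\textbf{Bounding $\|M\|_{\op}$.} For polynomials $p,r$ of degree at most $k$, Cauchy--Schwarz and \eqref{eq:Vp-linear-bound} give
\[
|\Iprod{p}{rV'}|\le C_1\|p\|_{L_2(w)}\,\|(1+|y|)r\|_{L_2(w)} .
\]
Hence $\|M\|_{\op}\le C_1(1+Y_k)$, where $Y_k:=\sup\{\|yr\|_{L_2(w)}/\|r\|_{L_2(w)}:\deg r\le k\}$. So everything reduces to proving $Y_k\lesssim\sqrt{k}$. Equivalently, the Jacobi recursion coefficients of $w$ (the $a_j,b_j$ of the three-term recursion) must be $O(\sqrt{j})$.

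\textbf{Main obstacle: $Y_k\lesssim\sqrt k$.} A naive integration by parts using $yV'(y)\ge C_2y^2+V'(0)y$ only gives $C_2\|yp\|^2\lesssim \|p\|^2+\|yp\|\,\|p'\|$. This couples $Y_k$ to the quantity we are trying to bound and is circular. Instead I would prove a restricted-range (Mhaskar--Saff type) inequality: for $R=C\sqrt{k}$ and $\deg p\le k$,
\[
\int_{|y|>R}y^2p^2w\le e^{-ck}\|p\|^2_{L_2(w)} .
\]
The two-sided hypotheses pin the weight between Gaussian-type profiles. Integrating $V''\ge C_2$ twice gives $V(y)\ge V(0)+V'(0)y+C_2y^2/2$ in the tails. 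Integrating \eqref{eq:Vp-linear-bound} gives $V(y)\le V(0)+C_1(|y|+y^2/2)$ near the origin. Comparing with Gaussian weights of these two variances, I would transfer the known Gaussian fact that a degree-$k$ polynomial carries exponentially little mass beyond $C\sqrt{k}$. Concretely, I would bound $|p(y)|$ for $|y|>R$ via the growth $|p(y)|\le (2|y|/r)^k\max_{[-r,r]}|p|$ and a Nikolskii-type estimate on $[-r,r]$, then integrate against the Gaussian tail of $w$. With this in hand, $\|yp\|\le R\|p\|+e^{-ck/2}\|p\|\lesssim\sqrt{k}\,\|p\|$. Combining the three steps gives $\|p'\|_{L_2(w)}\le\|L\|_{\op}\|p\|_{L_2(w)}\le C\sqrt{k}\log(k+1)\|p\|_{L_2(w)}$.
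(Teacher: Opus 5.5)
Your proof is correct, but it takes a different and longer route than the paper at the key step. The reduction is the same as the paper's. $L$ is the strictly upper-triangular part of $S=(\Iprod{q_i}{V'q_j})_{0\le i,j\le k}$, triangular truncation supplies the $\log(k+1)$, and \eqref{eq:Vp-linear-bound} reduces $\|S\|_{\op}$ to the norm of multiplication by $y$ on $\mathcal P_k$. You diverge in proving $\|yp\|_{L_2(w)}\lesssim\sqrt k\,\|p\|_{L_2(w)}$.

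\textbf{The paper's route.} Integration by parts is circular for a general $p$, as you say. The paper avoids this by applying it only to the orthonormal basis. Since $yq_j'=jq_j+(\text{lower degree})$, orthonormality gives exactly $\Iprod{yq_j'}{q_j}=j$. Hence $\int yV'q_j^2w=\int q_j^2w+2\int yq_jq_j'w=2j+1$. Combined with $yV'(y)\ge C_2y^2-C_1|y|$, this gives $\alpha_{j+1}^2+\beta_j^2+\alpha_j^2=\int y^2q_j^2w\lesssim j$ for the recurrence coefficients. The maximum row sum of the Jacobi matrix is then $O(\sqrt k)$. This takes a few lines and uses only $V''\ge C_2$ and $|V'(0)|\le C_1$.

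\textbf{Your route.} The restricted-range argument does go through. Take $r=\sqrt k$ and $R=A\sqrt k$, and combine four ingredients:
\begin{itemize}
\item the Legendre--Nikolskii bound $\max_{[-r,r]}p^2\le\tfrac{(k+1)^2}{2r}\int_{-r}^rp^2$;
\item the lower bound $w\ge e^{-V(0)}e^{-C_1(r+r^2/2)}$ on $[-r,r]$;
\item the Chebyshev growth bound $|p(y)|\le(2|y|/r)^k\max_{[-r,r]}|p|$;
\item the upper bound $w\le e^{-V(0)}e^{C_1|y|-C_2y^2/2}$, with the monotonicity of $y^{2k+2}e^{-C_2y^2/4}$ for $y^2\ge(4k+4)/C_2$.
\end{itemize}
Together these give
$\int_{|y|>R}y^2p^2w\lesssim \mathrm{poly}(k)\,e^{C_1\sqrt k}\exp\{k(2\log(2A)+C_1/2-C_2A^2/4)\}\,\|p\|_{L_2(w)}^2$.
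This is $O(\|p\|_{L_2(w)}^2)$ once $A$ is large enough, depending on $C_1,C_2$.

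You should make the choice $r\asymp\sqrt k$ explicit, because it is forced by the mismatch between the two Gaussian profiles. A much larger $r$ makes the loss $e^{C_1r^2/2}$ from the lower bound on $w$ too big. A much smaller $r$ makes the growth factor $(2R/r)^{2k}$ too big unless $R\gg\sqrt k$, which loses the rate.

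\textbf{Comparison.} Your approach needs more machinery and uses both hypotheses for the multiplication bound. In exchange it gives a stronger localization statement: degree-$k$ polynomials carry negligible $w$-weighted mass outside $|y|\lesssim\sqrt k$. The paper's approach is much shorter and exploits the exact identity available on the orthonormal basis.
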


    \begin{proof}
Throughout the proof, the inner product between $f,g\in L_2(w)$ is
$\langle f,g\rangle := \int_{\reals} fg w$. 
By assumption \prettyref{eq:Vpp-lower-bound}, $w$ is upper bounded by a Gaussian tail so we have the integration by parts identity \prettyref{eq:adjoint} for any $f,g$ that grow at most exponentially fast.
    
      Let $\{q_j\}_{j\ge 0}$ be the orthonormal polynomials in $L_2(w)$, so that $
        \langle q_i,q_j\rangle=\delta_{ij}$.
      For $k\ge 0$, let $\mathcal P_k$ denote the space of polynomials of degree at most $k$, and represent the differentiation operator on $\mathcal P_k$ in the basis $\{q_0,\dots,q_k\}$:
      \[
        L:=\bigl(\langle q_i,q_j'\rangle\bigr)_{0\le i,j\le k}.
      \]
      Since $q_j'\in \mathcal P_{j-1}$, the matrix $L$ is strictly upper triangular.

      Next consider the symmetric matrix
      \[
        S:=
        \bigl(\langle q_i,V' q_j\rangle\bigr)_{0\le i,j\le k}.
      \]

      By integration by parts, for $i<j$,
      \[
        \langle q_i,q_j'\rangle
        =
        \langle q_i,V' q_j\rangle.
      \]
      Also, $\langle q_i,V' q_i\rangle
        =
        2\langle q_i',q_i\rangle
        =0$. Hence, $S$ has zero diagonal and satisfies $S = L+L^\top$.
In other words, $L$ is the strictly upper triangular part of $S$. Therefore, there exists a universal constant $C_0$ so that
\cite{Bhatia2000}:
      \[
        \|L\|_{\op}
        \le
        C_0 \log(k+1)\,\|S\|_{\op},
      \]

      It remains to bound $\|S\|_{\op}$. Note that  $S$ is the matrix of the  operator
      $\Pi_k M_{V'}:\mathcal P_k\to \mathcal P_k$ of first multiplying by $V'$ then projecting to $\calP_k$, namely, 
      $\Pi_k M_{V'}(p) = \Pi_k (pV')$, where
      $\Pi_k$ denotes the orthogonal projection onto $\mathcal P_k$. Then
      \[
        \|S\|_{\op}
        \le
        \sup_{0\ne p\in \mathcal P_k}\frac{\|V' p\|_{L_2(w)}}{\|p\|_{L_2(w)}}.
      \]
      Using \eqref{eq:Vp-linear-bound},
      \[
        \|V' p\|_{L_2(w)}
        \le
        C_1\bigl(\|p\|_{L_2(w)}+\|y p\|_{L_2(w)}\bigr).
      \]
      Thus it suffices to bound the operator norm of multiplication by $y$ on $\mathcal P_k$.

      We now claim that
      \begin{equation}\label{eq:y-mult-bound}
        \|y p\|_{L_2(w)}
        \le
        C'\sqrt{k}\,\|p\|_{L_2(w)},
        \qquad p\in \mathcal P_k,
      \end{equation}
      for a constant $C'$ depending only on $C_1,C_2$. Once this is proved, we get
      \[
        \|S\|_{\op}
        \le
        C_1\bigl(1+C'\sqrt{k}\bigr)
        \le
        C''\sqrt{k},
      \]
      hence
      \[
        \|L\|_{\op}
        \le
        C_0 C'' \sqrt{k}\log(k+1),
      \]
      which is the desired estimate.

      It remains to prove \eqref{eq:y-mult-bound}. Recall that any orthogonal polynomial satisfies a three-term recurrence:\footnote{
Indeed, expanding the degree-$(j+1)$ polynomial $y q_j$ in the orthonormal basis $\{q_0,\dots,q_{j+1}\}$, only the last three terms $q_{j-1}$, $q_{j+1}$, and $q_j$ survive, because for $i\le j-2$, we have $\langle y q_j,q_i\rangle=\langle q_j,y q_i\rangle=0$.}
\begin{equation}
y q_j
        =
        \alpha_{j+1}q_{j+1}+\beta_j q_j+\alpha_j q_{j-1}.
    \label{eq:threeterm}
\end{equation}
In other words, 
in the orthonormal basis $\{q_0,\dots,q_{k+1}\}$, multiplication by $y$ from $\mathcal P_k$ to $\mathcal P_{k+1}$ is represented by the first $(k+1)$ columns of the following $(k+2)\times (k+2)$ symmetric tridiagonal matrix (known as the Jacobi matrix):
      \[J:=
        \begin{pmatrix}
          \beta_0 & \alpha_1 & 0 & 0 & \cdots & 0 & 0\\
          \alpha_1 & \beta_1 & \alpha_2 & 0 & \cdots & 0 & 0\\
          0 & \alpha_2 & \beta_2 & \alpha_3 & \cdots & 0 & 0\\
          0 & 0 & \alpha_3 & \beta_3 & \ddots & \vdots & \vdots\\
          \vdots & \vdots & \vdots & \ddots & \ddots & \alpha_k & 0\\
          0 & 0 & 0 & \cdots & \alpha_k & \beta_k & \alpha_{k+1}\\
          0 & 0 & 0 & \cdots & 0 & \alpha_{k+1} & \beta_{k+1}
        \end{pmatrix}.
      \]
      As a result,
         \[
        {\sup_{0\ne p\in \mathcal P_k}\frac{\|y p\|_{L_2(w)}}{\|p\|_{L_2(w)}}
        \le
        \|J\|_{\op}.}
      \]

      We next bound the three-term recurrence coefficients in \prettyref{eq:threeterm}. By \eqref{eq:Vpp-lower-bound}, for every $y\in\mathbb R$,
      \[
        yV'(y)\ge C_2 y^2-|V'(0)|\,|y|
        \geq C_2y^2-C_1|y|.
      \]
      Indeed, for $y>0$,
      \[
        V'(y)=V'(0)+\int_0^y V''(t)\,dt \ge V'(0)+C_2 y,
      \]
      while for $y<0$,
      \[
        V'(y)=V'(0)-\int_y^0 V''(t)\,dt \le V'(0)+C_2 y,
      \]
      and both cases yield the displayed inequality after multiplying by $y$.

      Now fix $j\ge 0$. 
      Integration by parts gives
      \[
  \int yV' q_j^2 w
  = - \int yq_j^2 w'
  = \int q_j^2 w + 2 \int y q_j q_j' w
  = 1 + 2j
      \]
      where we used the fact that 
      $y q_j' = j q_j + p_{j-1}$ for some polynomial $p_{j-1}$ of degree at most $j-1$.
      Combining this with the preceding lower bound on $yV'(y)$, we get
      \[
        2j+1
        \ge
        \int  q_j^2 w(C_2y^2-C_1|y|)
        \geq 
        \frac{C_2}{2}\int y^2 q_j^2 w-\frac{C_1^2}{2C_2},
      \]
      hence
      \[
        \int y^2 q_j(y)^2 w(y)\,dy
        \le
        \frac{4j+2}{C_2}+\frac{C_1^2}{C_2^2}.
      \]
      By the three-term recurrence and orthonormality,
      \[
        \alpha_{j+1}^2+\beta_j^2+\alpha_j^2
        =
        \int y^2 q_j^2 w
        \le
        \frac{4j+2}{C_2}+\frac{C_1^2}{C_2^2}.
      \]
      Thus
      \[
        \|J\|_{\op}
        \le
       \max_i \sum_j |J_{ij}|
        \le
        {\sup_{0\le j\le k+1}\bigl(|\alpha_j|+|\beta_j|+|\alpha_{j+1}|\bigr)}
        \le
        {\sqrt{3\left(\frac{4k+6}{C_2}+\frac{C_1^2}{C_2^2}\right)}.}
      \]
      This proves \eqref{eq:y-mult-bound}, and completes the proof.
    \end{proof}

    Next, we verify the conditions on the score function $V'$ in the preceding lemma for the special case of weight
    \begin{equation}
      w(y)=e^{-V(y)}=\frac{\varphi(y)^2}{f(y)},
      \qquad
      f=\nu * \varphi,
      \label{eq:w-general}
    \end{equation}
    where $\varphi(y)=\frac{1}{\sqrt{2\pi}} e^{-y^2/2}$ is the standard Gaussian density and $\nu$ is a probability measure on $\reals$.

\begin{lemma}[Properties of the weight \prettyref{eq:w-general}]
\label{lem:Vpp-phi2-over-f}
\begin{enumerate}
    \item For any $\nu$,
\begin{equation}
V''(y)\ge 1,\qquad y\in\mathbb R.
\label{eq:V2}
\end{equation}

\item Assume that
\begin{equation}
\nu([-a,a]) \ge \frac12.
\label{eq:nubulk}
\end{equation}
Then
\begin{equation}
|V'(y)| \le \bigl(3+a+\sqrt{\log 4}\bigr)(|y|+1),
\qquad y\in\mathbb R.
\label{eq:V1}
\end{equation}
\end{enumerate}
\end{lemma}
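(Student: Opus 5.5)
Since $w=\varphi^2/f$ with $f=\nu*\varphi$, I will start from
\[
V(y)=-2\log\varphi(y)+\log f(y)=y^2+\log f(y)+\text{const},
\]
so that $V'(y)=2y+\frac{f'}{f}(y)$ and $V''(y)=2+(\log f)''(y)$. Everything then reduces to standard facts about the posterior of $U\sim\nu$ given $Y=y$ in the model $Y=U+Z$. Tweedie's formula gives $\frac{f'}{f}(y)=m_\nu(y)-y$, so $V'(y)=y+m_\nu(y)$, consistent with \prettyref{eq:Vprime}. The second-order Tweedie identity gives $(\log f)''(y)=\Var(U\mid Y=y)-1$. Hence
\[
V''(y)=1+\Var(U\mid Y=y)\ge 1 .
\]
This proves \prettyref{eq:V2}. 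I will justify the identity by differentiating $f'(y)/f(y)=\int (u-y)\varphi(y-u)\nu(du)/f(y)$ under the integral, which is legitimate because the Gaussian kernel dominates.

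For \prettyref{eq:V1} it suffices to show $|m_\nu(y)|\le C(a)(1+|y|)$, since then $|V'(y)|\le|y|+|m_\nu(y)|$. By symmetry, I will only bound $m_\nu(y)$ from above for $y\ge0$; the lower bound and the case $y<0$ are analogous.

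The posterior has density proportional to $e^{uy-u^2/2}\nu(du)$. I will set a threshold $t=|y|+a+s$ with $s$ to be chosen. For $u\ge t$, compare the posterior mass with the contribution of the bulk $[-a,a]$, which has $\nu$-mass at least $1/2$. On $[-a,a]$ the exponent satisfies $uy-u^2/2\ge -a|y|-a^2/2$. For $u>t$ the exponent is $\le ty-t^2/2$ or smaller, since $u\mapsto uy-u^2/2$ is decreasing once $u>y$. More precisely, for $u\ge t$,
\[
uy-\tfrac{u^2}{2}-\Bigl(-a|y|-\tfrac{a^2}{2}\Bigr)\le -\tfrac12(u-y)^2+\tfrac12(|y|+a)^2 .
\]
Therefore
\[
\mathbb P(U>t\mid y)\le 2\exp\Bigl(-\tfrac12\bigl[(t-y)^2-(|y|+a)^2\bigr]\Bigr),
\]
and a similar Gaussian-type bound holds for $\mathbb E[U\,1\{U>t\}\mid y]$.

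The main obstacle is producing the explicit constant $3+a+\sqrt{\log 4}$. I will do this through the bound
\[
m_\nu(y)\le t+\mathbb E\bigl[(U-t)_+\mid y\bigr],
\]
integrating the tail bound in $u$ to get $\mathbb E[(U-t)_+\mid y]\le \int_t^\infty \mathbb P(U>v\mid y)\,dv$. Choosing $s$ of order $\sqrt{\log 4}$ plus a linear multiple of $|y|+a$, the quantity $(t-y)^2-(|y|+a)^2$ becomes at least $\log 4$ plus a quadratic term in $v-t$. The integral is then $O(1)$, and the total is at most $(3+a+\sqrt{\log4})(1+|y|)$.

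I expect the bookkeeping to give $t\le y+(|y|+a)+\sqrt{\log 4}$, and hence $m_\nu(y)\le 2|y|+a+\sqrt{\log4}+O(1)$, so that $|V'(y)|\le 3|y|+a+\sqrt{\log4}+O(1)$. This fits inside the claimed bound $(3+a+\sqrt{\log4})(|y|+1)$.
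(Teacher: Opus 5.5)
Your proof is correct. Part 1 is the paper's argument: $V'=y+m_\nu$ and $V''=1+\Var(U\mid Y=y)$.

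For part 2 you take a different route. The paper does not bound posterior tails at all. It uses the Jiang--Zhang score inequality, which follows from Jensen in one line:
\[
\Bigl(\tfrac{f'}{f}(y)\Bigr)^2=(\Expect[U-y\mid Y=y])^2\le \Expect[(U-y)^2\mid Y=y]\le 2\log\Expect\bigl[e^{(U-y)^2/2}\mid Y=y\bigr]=\log\frac{1}{2\pi f(y)^2}.
\]
It combines this with $f(y)\ge \frac{1}{2\sqrt{2\pi}}e^{-(|y|+a)^2/2}$, which is the same bulk-mass lower bound on the normalizing constant that you use. This gives the two-sided bound $|f'/f|\le |y|+a+\sqrt{\log 4}$ at once. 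Hence $|V'|\le 2|y|+|f'/f|\le 3|y|+a+\sqrt{\log 4}$, with no additive slack, no sign cases and no reflection argument.

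Your route proves a sub-Gaussian tail bound for the posterior of $U$ beyond $t=y+|y|+a$ and integrates it. To close the bookkeeping, make two points explicit.
\begin{itemize}
\item Writing $v-y=(|y|+a)+r$ gives $(v-y)^2-(|y|+a)^2=2(|y|+a)r+r^2\ge r^2$. Hence $\Expect[(U-t)_+\mid Y=y]\le 2\int_0^\infty e^{-r^2/2}\,dr=\sqrt{2\pi}<3$, uniformly in $y$ and $a$. This uniformity is what lets the additive constant be absorbed into the $3$ in $(3+a+\sqrt{\log 4})(|y|+1)$.
\item The lower bound on $m_\nu(y)$ comes from the identity $m_\nu(y)=-m_{\tilde\nu}(-y)$, where $\tilde\nu$ is the reflection of $\nu$ and satisfies the same hypothesis. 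So you need the upper bound at both signs of $y$, which your tail bound provides.
\end{itemize}

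The paper's route is shorter and gives the stated constant with no slack. Yours yields more information, namely posterior tail concentration and hence control of higher posterior moments, at the cost of this extra case analysis.
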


\begin{proof}
We first prove \prettyref{eq:V2}.
Let $U\sim \nu$, $Z\sim N(0,1)$, and $Y=U+Z$, so that $Y$ has density $f$. Define
\[
m(y):=\E[U\mid Y=y].
\]
Then
\[
\frac{f'(y)}{f(y)}=m(y)-y,
\]
hence
\begin{equation}
    V'(y)=-(\log w)'(y)=2y+\frac{f'(y)}{f(y)}=y+m(y).
    \label{eq:Vp-upper}
\end{equation}
Differentiating once more,
\[
V''(y)=1+m'(y).
\]
For Gaussian noise, the posterior mean satisfies
\[
m'(y)=\Var(U\mid Y=y)\ge 0.
\]
Therefore
\[
V''(y)=1+\Var(U\mid Y=y)\ge 1.
\]

Next, we prove \prettyref{eq:V1}. Recall an elementary bound on the score from \cite[Lemma A.1]{JiangZhang2009}:
\begin{equation}
\left(\frac{f'(y)}{f(y)}\right)^2
\le
\log\frac{1}{2\pi f(y)^2}.
\label{eq:JZ}
\end{equation}
Indeed, with $U\sim \nu$ and $Y=U+Z$, applying Jensen's inequality yields
\[
\pth{\frac{f'(y)}{f(y)}}^2
=(\Expect[U-y|Y=y])^2
\leq \Expect[(U-y)^2|Y=y]
\leq 2 \log \Expect[\exp((U-y)^2/2)|Y=y]
= \log\frac{1}{2\pi f(y)^2}.
\]

Note that $f(y)$ is lower bounded using  \prettyref{eq:nubulk} as follows:
\[
    f(y)
=
\int \varphi(y-u)\,\nu(du)
\ge
\int_{[-a,a]} \varphi(y-u)\,\nu(du)
\ge
\frac12 \inf_{|u|\le a}\varphi(y-u)
\geq \frac{1}{2\sqrt{2\pi}}e^{-(|y|+a)^2/2}.
\]
Substituting this into \prettyref{eq:JZ},
\[
\left|\frac{f'(y)}{f(y)}\right|
\le
\sqrt{(|y|+a)^2+\log 4}
\le
|y|+a+\sqrt{\log 4}.
\]
Combining this with \prettyref{eq:Vp-upper} completes the proof.
\end{proof}

Finally, we prove \prettyref{prop:exp-bernstein}. By assumption, $\nu = (G+H)/2 \in \calG_\alpha(\sigma)$. Then
$\nu([-a,a])\geq \frac12$ with $a:=\sigma (\log 4)^{1/\alpha}$ by Markov's inequality. Then \prettyref{prop:exp-bernstein} follows from \prettyref{lem:diff-op-bound}, with conditions verified by \prettyref{lem:Vpp-phi2-over-f}.

\section{Counterexamples for moment class}
\label{app:moment}
Recall that \prettyref{thm:ub-compact-intro} and \prettyref{thm:ub-exp} bound the regret proportionally to the squared Hellinger distance up to logarithmic factors for priors with compact support or exponential tails.
In this appendix, we show that such a
nearly linear regret bound fails for the moment class.

\begin{prop}\label{prop:moment-counterexample}
Fix $p > 1$. There exists $(G_m,H_m)$ with uniformly bounded $p$th moments such that
$\epsilon_m^2 = \hellinger^2 (f_ {G_m},f_ {H_m}) \to 0$ and
$
\Regret(H_m\|G_m) \geq c \epsilon_m^{2-2/p}
$
for some constant $c$.
\end{prop}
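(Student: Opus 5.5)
The plan is to take the simplest heavy-tailed perturbation: a tiny amount of prior mass placed very far away. For a large parameter $a=a_m\to\infty$, set
\[
G_m := (1-\tau)\delta_0 + \tau\,\delta_a,\qquad H_m:=\delta_0,\qquad \tau:=a^{-p}.
\]
Then both priors have $p$th moment at most $\tau a^p = 1$, uniformly in $m$. The heuristic is as follows. The far bump $\tau\varphi(\cdot-a)$ is essentially disjoint from $\varphi$, so it costs about $\tau$ in squared Hellinger distance. On the other hand, the two Bayes rules differ by about $a$ on an event of $G_m$-probability about $\tau$, so the regret is about $\tau a^2=\tau^{1-2/p}$. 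This should give $\Regret\gtrsim \epsilon^{2-2/p}$.

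The first step is to show $\epsilon_m^2\asymp\tau$ in both directions. We need both directions because the exponent $1-2/p$ is negative when $1<p<2$.
\begin{itemize}
\item For the upper bound, use $\hellinger^2\le \int|f_G-f_H|\le 2\tau$.
\item For the lower bound, restrict the integral to $y\in[a-1,a+1]$. There $\varphi(y)\le e^{-(a-1)^2/2}$, which is negligible compared with $\tau\varphi(y-a)\gtrsim a^{-p}$ once $a$ is large. Hence $\sqrt{f_{G_m}}-\sqrt{f_{H_m}}\ge \tfrac12\sqrt{\tau\varphi(y-a)}$ on this interval, and so $\epsilon_m^2\gtrsim\tau$.
\end{itemize}
In particular $\epsilon_m\to0$ as $a\to\infty$.

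The second step is to lower bound the regret using the first expression in \prettyref{eq:regret}, $\Expect_{G_m}[(m_{H_m}(Y)-m_{G_m}(Y))^2]$.
\begin{itemize}
\item Since $H_m=\delta_0$, we have $m_{H_m}\equiv 0$.
\item For $y\in[a-1,a+1]$, the posterior is
\[
\Prob(\theta=a\mid y)=\Bigl(1+\tfrac{1-\tau}{\tau}\,e^{-ay+a^2/2}\Bigr)^{-1}.
\]
Here $e^{-ay+a^2/2}\le e^{-a^2/2+a}$, which is much smaller than $\tau=a^{-p}$ for large $a$. So the posterior probability is at least $1/2$, and therefore $m_{G_m}(y)\ge a/2$.
\item Under $G_m$, the event $Y\in[a-1,a+1]$ has probability at least $\tau\,\Prob(|Z|\le1)\ge c\tau$.
\end{itemize}
Combining these,
\[
\Regret(H_m\|G_m)\ \ge\ c\,\tau a^2\ =\ c\,\tau^{1-2/p}.
\]
Finally, $\epsilon_m^2\asymp\tau$ gives $\tau^{1-2/p}\asymp \epsilon_m^{2-2/p}$, with constants depending only on $p$, whatever the sign of the exponent. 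This concludes the argument.

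The only mildly delicate point is the two-sided comparison $\epsilon_m^2\asymp\tau$: a one-sided upper bound on $\epsilon_m$ suffices only for $p\ge 2$. I expect this to be routine once the far interval $[a-1,a+1]$ is isolated as above. Should one prefer $G_m,H_m$ to be closer, for instance $H_m=(1-\tau)\delta_0+\tau\delta_{a'}$ with $a'$ also far, the same computation applies. The simple choice above already suffices.
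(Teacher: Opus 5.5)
Your construction is the paper's own ($H=\delta_0$, $G=(1-\eta)\delta_0+\eta\delta_b$ with $\eta=b^{-p}$), and your two estimates are minor variants of the paper's. The paper bounds $\hellinger^2\le\hellinger^2(\Bern(\eta),\Bern(0))\le 2\eta$ by data processing. It lower bounds $\Expect[\Expect[U|Y]^2]$ by $\Var(U)$ minus the squared error of the threshold rule $b\,\indc{Y\ge b/2}$. You instead use $\int|f_G-f_H|\le 2\tau$ and an explicit posterior bound on the window $[a-1,a+1]$. Both routes give $\Regret\gtrsim\tau a^2=\tau^{1-2/p}$.

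Your final line has an exponent error that you should fix. From $\epsilon_m^2\asymp\tau$ you get $\tau^{1-2/p}\asymp(\epsilon_m^2)^{1-2/p}=\epsilon_m^{2-4/p}$, not $\epsilon_m^{2-2/p}$. The slip is harmless: $\epsilon_m\le 1$ eventually and $2-4/p<2-2/p$, so $\epsilon_m^{2-4/p}\ge\epsilon_m^{2-2/p}$, and the stated bound follows in a stronger form.

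This also shows that your lower bound $\epsilon_m^2\gtrsim\tau$ is not needed for the statement as posed. Write $\tau a^2=a\,\tau^{1-1/p}$ and use only $\epsilon_m^2\le 2\tau$ together with $1-1/p>0$. This gives $\Regret(H_m\|G_m)\gtrsim a\,\epsilon_m^{2-2/p}$ for every $p>1$. The two-sided comparison matters only if you want the sharper bound $\epsilon_m^{2-4/p}$ in the range $1<p<2$.
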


\begin{proof}
For $b>0$, let
\[H=\delta_0,
\qquad
G=(1-\eta)\delta_0+\eta\delta_b,
\qquad
\eta=b^{-p}
\]
both with $p$th moment at most 1.

Note that $f_H(y)=\varphi(y)$, and $
f_G(y)=(1-\eta)\varphi(y)+\eta \varphi(y-b)$.
By data processing inequality,
\[
\hellinger^2(f_G,f_H)
\le \hellinger^2(\Bern(\eta),\Bern(0))
\le 2\eta.
\]

To lower bound the regret,
note that the Bayes estimator under $H$ is $m_H(y)=0$ so
\[
\Regret(H\|G)
= \Expect[\Expect[U|Y]^2]
\ge \var(U) - \Expect[(U-\Expect[U|Y])^2],
\]
where $U\sim G\indep Z\sim N(0,1)$, and $Y=U+Z$.
Write $U=b W$ with $W \sim \Bern(\eta)$.
Then $\Var(U)=\eta(1-\eta)b^2$. To bound the conditional variance from above, consider a suboptimal estimator $\hat U= b \hat W$ and $\hat W = \indc{Y\geq b/2}$.
Then
$\Prob[W\neq \hat W] \leq \prob{|Z| \geq b/2} \leq \exp(-b^2/8)$ and hence
$\Expect[(U-\Expect[U|Y])^2]
\leq \Expect[(U-\hat U)^2]
\leq  b^2 \exp(-b^2/8)$.
In all,
\[
\Regret(H\|G) \geq b^2(\eta(1-\eta) - \exp(-b^2/8)).
\]
\end{proof}

\section{Proof of \prettyref{cor:NPMLE}}
\label{app:NPMLE}

As mentioned after \prettyref{cor:NPMLE}, the new regret-Hellinger inequalities are  readily applicable to constrained NPMLE. With a modicum of extra effort, they also apply to unconstrained NPMLE, again without regularization. Let us consider the  subgaussian class as an example. Recall the unconstrained NPMLE \eqref{eq:NPMLE}.
It is known that, deterministically,
\begin{itemize}
    \item $\hat G$ exists uniquely and is a discrete measure with at most $n$ atoms supported on the sample range $[y_{\min},y_{\max}]$ \cite{lindsay1983geometry1}.

    \item Suppose that $y_1,\ldots,y_n$ are drawn independently from $f_G$, where the true
    prior 
    $G \in \calG_2(\sigma)$ is subgaussian with proxy variance $\sigma^2$. Then $\hellinger^2(f_
    {\hat
    G},f_G) \leq \frac{C (\log n)^2}{n}$ with probability at least $1-n^{-c}$ for some constants $c,C$ \cite{Zhang_2009}.
\end{itemize}

We claim that on the high-probability event $A$ that $\hellinger^2(f_{\hat G},f_G) \leq
\frac{C(\log n)^2}{n}$ and $\hat G$ is supported on $[-\sqrt{C\log n},\sqrt{C\log n}]$, $\hat
 G$ is subgaussian with proxy variance at most a constant. Since $A$ can be made to have
 probability at least $1-n^{-2}$ and regret is bounded by $2(\max_i |y_i|)^2 + C_\sigma$,
 one could use Cauchy--Schwarz to bound $\E_G[\Regret(\hat G \Vert G) \one(A^c)]$. Thus
 it suffices to bound regret on the event $A$.
  Applying \prettyref
 {thm:ub-exp} with $\alpha=2$ and $\epsilon^2 = \frac{C(\log n)^2}{n}$, we obtain
 $\Regret(\hat G\|G) = O(\frac{1}{n} (\log n)^3 (\log \log n)^2)$.
 The claim on the proxy variance of $\hat G$ is justified by applying the following lemma
 and $\TV \leq \hellinger$. 

\begin{lemma}
\label{lem:tv-subgaussian-transfer}
Let $\varphi$ denote the standard normal density, and $
f_G = G * \varphi$, $f_H = H * \varphi$.
Assume that:
\begin{enumerate}
    \item $H$ is supported on $[-A,A]$ for some $A>0$.
    \item $G$ is $C$-subgaussian, in the sense that if $V \sim G$, then
    \[
    \mathbb E e^{\lambda V}
    \le
    \exp\!\left(\frac{C^2\lambda^2}{2}\right)
    \qquad
    \text{for all } \lambda \in \mathbb R.
    \]
    \item For some $c>0$,
    \[
    \TV(f_G,f_H)\le e^{-cA^2}.
    \]
\end{enumerate}
Then,
$H$ is $C'$-subgaussian, with
\[
C' \le C_0 \max\!\left\{C,\frac{1}{\sqrt c}\right\}
\]
for some universal constant $C_0$.
\end{lemma}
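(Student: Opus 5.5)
Since $H$ is supported on $[-A,A]$, we have $\Expect_H e^{\lambda V}\le e^{|\lambda| A}$. The plan is to show that $H$ cannot put much mass near $\pm A$ unless $A\lesssim \max\{C,1/\sqrt c\}$. We split into two regimes according to the size of $A$.

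\emph{Small $A$.} Suppose $A\le K\max\{C,1/\sqrt c\}$ for a large universal constant $K$. Then $H$ is supported on an interval of radius $O(\max\{C,1/\sqrt c\})$. By Hoeffding's lemma, $\Expect_H e^{\lambda (V-\Expect V)}\le e^{\lambda^2A^2/2}$, and $|\Expect V|\le A$. Hence $\Expect e^{\lambda V}\le e^{\lambda A+\lambda^2A^2/2}$. We then use the elementary bound $\lambda A\le \lambda^2A^2/2+1/2$ when $|\lambda A|\ge 1$, and handle small $|\lambda|$ separately. Note that the centering matters, since the proxy variance in the definition is uncentered; this route only yields $\Expect e^{\lambda V}\le \exp(O(A^2\lambda^2)+O(1))$. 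To remove the additive constant, I would instead bound the mean $|\Expect_H V|$ directly. For this, compare first moments: $\Expect_H V=\int y f_H-\int y f_G+\Expect_G V$. Here $|\Expect_G V|=0$, because subgaussianity with that MGF bound forces mean zero (take $\lambda\to0$). The quantity $|\int y(f_H-f_G)|$ is controlled by TV together with the tails, giving roughly $(A+C)e^{-cA^2}+\ldots$. This gives $|\Expect_H V|\lesssim$ something that is small relative to the scale. Then Hoeffding gives $C'\lesssim A$ after the mean term is absorbed.

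\emph{Large $A$.} Suppose $A> K\max\{C,1/\sqrt c\}$. We bound the tail $H(|V|>t)$ for $t\in[t_0,A]$. The key step is to transfer tail mass through the convolution. If $H(V>t)=h$, then $f_H$ puts mass at least $h/2$ on $(t,\infty)$, by the symmetry of $Z$. On the other hand, $\Prob_{f_G}(Y>t)\le 2e^{-t^2/(2(C^2+1))}$. Therefore
\[
h\le 2\,\TV(f_G,f_H)+4e^{-t^2/(2(C^2+1))}\le 2e^{-cA^2}+4e^{-t^2/(2(C^2+1))}.
\]
Since $t\le A$, the first term is at most $2e^{-ct^2}$. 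This gives $H(|V|>t)\lesssim e^{-t^2/(C_0^2\max\{C^2+1,1/c\})}$ for all $t$, the tail being $0$ beyond $A$. A tail bound of this form implies the standard subgaussian MGF bound up to a universal constant, again modulo centering. The mean is controlled by the same TV argument, applied to $\Expect Y$ truncated at $\pm A$.

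\emph{Main obstacle.} The main difficulty is the uncentered MGF normalization, together with the additive ``$+1$'' from the noise variance $C^2+1$. When $C$ is tiny and $c$ is large, the claimed $C'\lesssim\max\{C,1/\sqrt c\}$ is small, yet the tail transfer only sees scale $\sqrt{C^2+1}$. To overcome this, I would first try replacing the tail comparison by a moment or MGF comparison. The idea is that $\Expect_{f}e^{\lambda Y}=e^{\lambda^2/2}\Expect e^{\lambda V}$ exactly, and for $|\lambda|\lesssim \sqrt c A$ the difference of truncated MGFs is bounded via TV, using $e^{|\lambda|(A+s)}e^{-cA^2}$. This cancels the noise factor $e^{\lambda^2/2}$ exactly and should give $\Expect_H e^{\lambda V}\le 2e^{C^2\lambda^2/2}+$ small terms for $|\lambda|\le \sqrt c A$. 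For $|\lambda|\ge\sqrt cA$, the trivial bound $e^{|\lambda|A}\le e^{\lambda^2/c}$ finishes.
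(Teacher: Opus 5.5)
Your large-$A$ tail transfer is, up to constants, the paper's entire proof. The paper uses $\{|U|>t,\ |Z|\le t/2\}\subset\{|Y_H|>t/2\}$. You instead use the symmetry of $Z$ to get $\Prob(Y_H>t)\ge H(U>t)/2$, which avoids losing a factor $4$ in the exponent. This gives, for $t\le A$, $H(|U|>t)\le 4e^{-ct^2}+8e^{-t^2/(2(C^2+1))}$, where the paper has $8(C^2+1)$ in place of $2(C^2+1)$. The bound is trivial for $t>A$, so neither your case split in $A$ nor Hoeffding is needed. Be aware that this is all the paper's proof establishes. Its last line is $H(|U|>t)\le 5e^{-t^2/L^2}$ with $L^2=\max\{8(C^2+1),1/c\}$. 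That is a non-centered tail (Orlicz, as in $\calG_2$) bound that still carries the $+1$, and it is all the NPMLE application in Appendix~\ref{app:NPMLE} uses.

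The gap is in your two repairs, and it cannot be closed, because the statement as literally written is false on exactly the two points you flagged.

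\emph{Centering.} The MGF normalization forces $\Expect_H U=0$, which the hypotheses do not give. Take $G=\delta_0$, $H=\delta_{1/10}$, $A=1/10$, $c=1$. All three assumptions hold, since $\TV(f_G,f_H)\approx 0.04\le e^{-0.01}$. Yet $e^{\lambda/10}\le e^{C'^2\lambda^2/2}$ fails as $\lambda\downarrow 0$ for every $C'$. By Jensen, $\Expect_H e^{\lambda U}\ge e^{\lambda\Expect_H U}$, so no nonzero mean, however small, can be ``absorbed''. A bound of the form $2e^{C^2\lambda^2/2}+(\text{small})$ is not of the required shape either.

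\emph{The $+1$.} It cannot be removed, even for centered $H$ and even in the tail sense. Take $G=\delta_0$ (which is $C$-subgaussian with $C=c^{-1/2}$) and $H=\frac12(\delta_{-A}+\delta_A)$ with $A=K/\sqrt c$. Since $\chi^2(f_H\|\varphi)=\cosh(A^2)-1$, we get $\TV(f_G,f_H)\le A^2/2\le e^{-K^2}=e^{-cA^2}$ once $c\ge K^2e^{K^2}/2$. Yet any subgaussian constant of $H$ is of order at least $A=K\max\{C,1/\sqrt c\}$, with $K$ arbitrary.

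\emph{Where your MGF comparison breaks.} Bounding $\int e^{\lambda y}|f_H-f_G|$ via TV forces truncation at $s\gtrsim A+|\lambda|$, because the $e^{\lambda y}$-tilt of $f_H$ sits near $U+\lambda$. After multiplying by $e^{-\lambda^2/2}$, the TV contribution is $2e^{-\lambda^2/2+|\lambda|s-cA^2}\ge 2e^{\lambda^2/2+|\lambda|A-cA^2}$, so the noise factor reappears rather than cancels. Separately, $e^{|\lambda|A}\le e^{\lambda^2/c}$ needs $|\lambda|\ge cA$, not $|\lambda|\ge\sqrt c A$. The example above indeed violates $\Expect_H e^{\lambda U}\le e^{\lambda^2/c}$ at $\lambda=K\sqrt c/2$ once $K\ge 2$.

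So drop the centering and noise-cancellation steps, and state what your tail-transfer argument actually proves: $H(|U|>t)\le 12e^{-t^2/L^2}$ for all $t\ge 0$, with $L^2=\max\{2(C^2+1),1/c\}$.
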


\begin{proof}
Let
\[
V \sim G,
\qquad
U \sim H,
\qquad
Z \sim N(0,1),
\]
with $V,U,Z$ independent, and define
\[
Y_G := V+Z,
\qquad
Y_H := U+Z.
\]
Then $Y_G$ has density $f_G$, and $Y_H$ has density $f_H$.

Since $V$ is $C$-subgaussian,
$Y_G$ is $\sqrt{C^2+1}$-subgaussian. By the Chernoff bound,
\[
\mathbb P(|Y_G|>s)
\le
2\exp\!\left(-\frac{s^2}{2(C^2+1)}\right)
\qquad
\text{for all } s\ge 0.
\]
Let $\delta := \TV(f_G,f_H)$. By the definition of total variation distance,
\[
\mathbb P(|Y_H|>s)
\le
2\exp\!\left(-\frac{s^2}{2(C^2+1)}\right)+\delta.
\]

Next we pass from $Y_H=U+Z$ back to $U$. If $|U|>t$ and $|Z|\le t/2$, then $
|Y_H| = |U+Z| > t/2$. Therefore
\[
\mathbb P(|U|>t)
\le
\mathbb P(|Y_H|>t/2)+\mathbb P(|Z|>t/2)
\le
2\exp\!\left(-\frac{t^2}{8(C^2+1)}\right)
+
\delta
+
2e^{-t^2/8}.
\]
Since $H$ is supported on $[-A,A]$, we only consider $0\le t\le A$. In this range, assumption (3) implies
$\delta
\le
e^{-cA^2}
\le
e^{-ct^2}$.
Hence,
\[
\mathbb P(|U|>t)
\le
2\exp\!\left(-\frac{t^2}{8(C^2+1)}\right)
+
\exp\!\left(-ct^2\right)
+
2e^{-t^2/8}
\leq 5\exp\!\left(-\frac{t^2}{L^2}\right)
\]
with $L^2 := \max\!\left\{8(C^2+1), \frac{1}{c}\right\}$.
\end{proof}

\newcommand{\eps}{\epsilon}
\newcommand{\Hell}{\hellinger}
\newcommand{\coverN}{N}
\newcommand{\bracketN}{N_{[]}}

\section{Optimal estimation of Gaussian mixtures with compactly supported priors}
\label{sec:optimal-density}

In this appendix we present two results on density estimation in terms of Hellinger distance. Let
\[y_1,\ldots, y_n \simiid 
f_G = \varphi * G, \quad G\in\mathcal P([-M,M]).
\]
be independently drawn from a Gaussian mixture density with a compactly supported prior $G$. 
We use $\Expect_{G}$ to denote expectation taken over this sample.\footnote{For simplicity, we present the results in terms of expected Hellinger risk. These rates hold typically, as the existing results on Hellinger risk through local Hellinger covering and bracketing entropy hold with high probability.}

The following theorem resolves the optimal Hellinger rate for estimating Gaussian mixtures for compactly supported priors, matching the minimax lower bound in
\cite[Theorem 20]{polyanskiy2021sharp} and improving the known best upper bound of $O(\frac{1}{n}(\frac{\log n}{\log\log n})^{1.5})$ \cite{NW21}.

\begin{theorem}[Optimal Hellinger rate for Gaussian mixtures]
\label{thm:lcb-risk}
For any $M>0$, there is a constant $C_M$ such that the following holds. 
Given $(y_1,\ldots, y_n) \simiid f_G$, there is an estimator $\tilde G \in
\mathcal P([-M, M])$, such that, for all $n\ge3$,
\[
\sup_{G\in\calP([-M,M])}
\mathbb E_{G}\Hell^2(f_{\tilde G},f_G)
\le
C_M\frac{\log n}{n\log\log n}.
\]
\end{theorem}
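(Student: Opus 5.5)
We would prove Theorem~\ref{thm:lcb-risk} by the Le Cam--Birg\'e construction: with a tournament (pairwise-test) estimator on a sieve one gets $\Expect_G \Hell^2(f_{\tilde G},f_G)\lesssim \eps_n^2$ once $\eps_n$ satisfies the \emph{local} entropy condition
\[
\sup_{G}\ \sup_{\eps\ge \eps_n}\ \log \coverN\bigl(\eps/2,\ \{f_H: H\in\calP([-M,M]),\ \Hell(f_H,f_G)\le \eps\},\ \Hell\bigr)\ \lesssim\ n\eps_n^2 .
\]
Properness, i.e.\ $\tilde G\in\calP([-M,M])$, comes from choosing the net points themselves of the form $f_H$ with $H$ supported on $[-M,M]$. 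So the task reduces to showing that this local entropy is $O\bigl(\log\frac1\eps/\log\log\frac1\eps\bigr)$, uniformly in $\eps$. Then $\eps_n^2\asymp \frac{\log n}{n\log\log n}$ solves $n\eps_n^2\asymp \log\frac1{\eps_n}/\log\log\frac1{\eps_n}$.

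The local entropy bound is where the polynomial approximation machinery of Section~\ref{sec:compact} should enter. Fix the centre $G$ and take $H$ with $\Hell(f_H,f_G)\le\eps$. We would work in the weighted space of Lemma~\ref{lmm:poly-f}, using $g_H=(f_H-f_G)/\varphi$ and a fixed weight comparable to $\varphi^2/f_G$. The Hermite expansion $g_H=\sum_j \frac{m_j(H)-m_j(G)}{j!}H_j$ together with the tail estimate \eqref{eq:gp-compact} shows that truncating at degree $k\asymp \log\frac1\eps/\log\log\frac1\eps$ costs only $\eps^{10}$ in $L_2(w)$. Hence $f_H$ is determined to accuracy $\eps^{5}$ by the vector $\bigl(m_j(H)-m_j(G)\bigr)_{j\le k}$, or equivalently by the finite-dimensional polynomial $p_H\in\calP_k$. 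In the orthonormal basis $\{q_j\}$ of $L_2(w)$, the localisation $\|g_H\|_{L_2(w)}^2\asymp\delta\lesssim\eps^2$ forces $p_H$ into a Euclidean ball of radius $O(\eps)$ in $\reals^{k+1}$. Covering that ball at scale $c\eps$ needs $(C/c)^{k+1}$ points, which gives local entropy $O(k)=O\bigl(\log\frac1\eps/\log\log\frac1\eps\bigr)$.

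We then need to transfer back from the polynomial to $\Hell$. Since the weight is built from $f_G$ rather than $(f_G+f_H)/2$, we must check that $\Hell^2(f_H,f_{H'})$ is controlled by $\|g_H-g_{H'}\|_{L_2(w)}^2$. This holds because $f_H,f_{H'}\gtrsim_M$ constant$\cdot f_G$ on the relevant range, as both are mixtures on $[-M,M]$, so $w_{H}\asymp_M w_G$ up to constants depending on $M$. For each net cell we then pick one realizable $H$, which keeps the estimator proper.

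\textbf{Main obstacle.} The hard part is making the entropy bound genuinely \emph{local}: the count must be $(C)^{k}$, not $(1/\eps)^{k}$. That requires the $L_2(w)$ norm of the truncated polynomial to be comparable to the Hellinger radius, with constants independent of $\eps$ and of $G$. The comparison $w_G\asymp_M w_{(G+H)/2}$ should supply this. A naive global cover costs $k\log\frac1\eps\asymp (\log\frac1\eps)^2/\log\log\frac1\eps$ and would give only the older rate.
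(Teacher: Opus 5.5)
You have the paper's architecture: Le Cam--Birg\'e driven by local entropy, approximation of $f_H/\varphi$ by a polynomial of degree $k\asymp\log\frac1\epsilon/\log\log\frac1\epsilon$, a volumetric cover of an $O(\epsilon)$-ball in a fixed $(k+1)$-dimensional space at scale $\asymp\epsilon$, and a transfer back to Hellinger. The gap is at the step you flag as the main obstacle, which you resolve with a false claim.

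\textbf{The pointwise comparison fails.} Pointwise comparability $w_G\asymp_M w_{(G+H)/2}$, i.e.\ $f_H\lesssim_M f_G$, fails even inside a tiny Hellinger ball. Take $G=\delta_0$ and $H=(1-\eta)\delta_0+\eta\delta_M$. Data processing gives $\hellinger^2(f_G,f_H)\le 2\eta$, yet $f_H/f_G=1-\eta+\eta e^{My-M^2/2}$ is unbounded. Swapping the roles of $G$ and $H$ shows that the bound $f_H+f_{H'}\gtrsim_M f_G$ you need for the transfer back also fails. Restricting to $|y|\le T$ does not rescue this. Making the tails negligible forces $T\gtrsim\sqrt{\log(1/\epsilon)}$, and there the density ratio is only bounded by $e^{O(MT)}$. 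So the ball's radius in the fixed norm inflates by $e^{O(M\sqrt{\log(1/\epsilon)})}$ relative to the net scale, and the covering count is no longer $C^{k}$.

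\textbf{Forward direction.} What is needed is an \emph{integrated} comparison, and it is a nontrivial external input. The paper uses \cite[Theorem~21]{jia.polyanskiy.wu.2023}, namely $\chi^2(f\|g)\le A_M^2\hellinger^2(f,g)$ for all $f,g\in\calF_M$ (display \eqref{eq:compact-chi2-hellinger}). This is exactly the statement $\|g_H\|_{L_2(\varphi^2/f_G)}=\|f_H/f_G-1\|_{L_2(f_G)}\le A_M\epsilon$, which places $p_H$ in an $O(\epsilon)$-ball of the fixed space.

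\textbf{Reverse direction.} You cannot simply bound $\hellinger^2(f_H,f_{H'})$ by $\int(f_H-f_{H'})^2/f_G$. The paper instead proves \prettyref{lmm:l2ratiohellinger}:
\begin{itemize}
\item Set $a=f_H/f_G$ and $b=f_{H'}/f_G$; by the same $\chi^2$ bound, both are $R\epsilon$-close to $1$ in $L_2(f_G)$.
\item On $\{a+b\ge\gamma\}$, for a small constant $\gamma$, use $(\sqrt a-\sqrt b)^2\le(a-b)^2/\gamma$.
\item On the complement, $a$ and $b$ are bounded away from $1$, so $\int_{\{a+b<\gamma\}}(a+b)f_G\lesssim\gamma\bigl(\|a-1\|^2_{L_2(f_G)}+\|b-1\|^2_{L_2(f_G)}\bigr)$.
\end{itemize}
Hence an $L_2(f_G)$-net at scale $\asymp\rho^2/\epsilon$, which is $\asymp\epsilon$ for $\rho=\epsilon/2$, is a Hellinger $\rho$-net, and the entropy stays $O(k)$.

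Without these two ingredients, your argument falls back to essentially the naive count you mention at the end. A minor point: the bound $\varphi^2/f_G\lesssim_M\varphi$ used for the truncation error requires first centering $G$. This is done by shift invariance, at the cost of replacing $M$ by $2M$.
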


\prettyref{thm:lcb-risk} applies the Birg\'e--Le Cam estimator \cite{Birge83}, which is computationally intractable. Next we show that the constrained NPMLE attains the optimal rate up to a $\log\log n$ factor.

\begin{theorem}[Constrained NPMLE]
\label{thm:npmle-risk}
Fix $M<\infty$, and let
\[
\hat G_n=\argmax_{H \in \mathcal P([-M,M])}\sum_{i=1}^n\log f_H(y_i)
\]
be the support-constrained NPMLE.  Then, for all $n\ge3$,
\[
\sup_{G\in\calP([-M,M])}
\mathbb E_{G} \Hell^2(f_{\hat G_n},f_G)
\le
C_M\frac{\log n}{n}.
\]
\end{theorem}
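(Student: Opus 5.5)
The plan is to use the standard empirical-process analysis of sieve/NP maximum likelihood estimators (Wong--Shen, van de Geer), in its \emph{local} form. Write $\mathcal F_M=\{f_H: H\in\mathcal P([-M,M])\}$ and $\mathcal F_M(r)=\{f\in\mathcal F_M:\hellinger(f,f_G)\le r\}$. The theorem gives, with exponential tail bounds (hence also in expectation, since $\hellinger^2\le 2$), that $\hellinger^2(f_{\hat G_n},f_G)\lesssim r_n^2$ whenever $r_n$ satisfies
\[
\int_{r_n^2}^{r_n}\sqrt{\log N_{[]}(u,\mathcal F_M(r_n),\hellinger)}\,du\lesssim \sqrt n\, r_n^2 .
\]
The global bracketing entropy $\log N_{[]}(u)\asymp (\log\frac1u)^2/\log\log\frac1u$ only gives $(\log n)^2/n$ up to $\log\log n$. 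The whole task is therefore to prove the local bound
\begin{equation*}
\log N_{[]}\bigl(u,\mathcal F_M(r),\hellinger\bigr)\;\lesssim_M\; \log\tfrac1u\cdot\log\tfrac{r}{u}\qquad (u\le r),
\end{equation*}
at least for $u\ge r^2\ge n^{-2}$. Plugging it in with $\log\frac1u\asymp\log n$ gives $r_n\sqrt{\log n}\lesssim\sqrt n\,r_n^2$, i.e.\ $r_n^2\asymp \log n/n$, as claimed.

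To get this local entropy bound I would reuse the machinery of \prettyref{sec:compact}. Fix $f_H\in\mathcal F_M(r)$ and set $g_H=(f_H-f_G)/\varphi$, with weight $w=\varphi^2/((f_G+f_H)/2)$. By \prettyref{eq:delta}--\prettyref{eq:delta-eps}, $\|g_H\|_{L_2(w)}^2\asymp\hellinger^2\le r^2$. Since the second moments are bounded, \prettyref{eq:w-phi} gives $w\lesssim\varphi$; moreover $w\gtrsim \varphi^2/\varphi(\cdot)e^{M|y|}$ on compacts, so $w$ and $\varphi$ are comparable on $|y|\le T$ up to $e^{O(MT)}$.

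Next, take $k\asymp\log\frac1u$ (for fixed $M$ this is enough; even $k\asymp \log\frac1u/\log\log\frac1u$ suffices). \prettyref{lmm:poly-f} then gives that the Hermite truncation $p_H$ of $g_H$ satisfies $\|g_H-p_H\|_{L_2(w)}\le u^{10}$. So $p_H$ lies in the $(k+1)$-dimensional space of polynomials of degree at most $k$, inside an $L_2(w)$-ball of radius $\approx r$. I would replace this $H$-dependent norm by a single fixed one: the $L_2(\varphi_T)$ norm with $\varphi_T=\varphi\,\mathbf 1\{|y|\le T\}$ and $T\asymp\sqrt{\log(1/u)}$. The cost is the comparability factor $e^{O(MT)}$, which only shifts $\log(r/u)$ by an additive $O(T)=O(\sqrt{\log(1/u)})$ and is harmless. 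A $u$-net of a radius-$r$ ball in a $(k+1)$-dimensional Euclidean space has $(3r/u)^{k+1}$ points, giving the entropy $k\log\frac ru\asymp\log\frac1u\log\frac ru$.

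Turning centres $p$ into Hellinger brackets, I would use
\[
\ell=\bigl(f_G+\varphi(p-u)\bigr)_+\mathbf 1_{|y|\le T},\qquad
\upsilon=\bigl(f_G+\varphi(p+u)\bigr)\mathbf 1_{|y|\le T}+E\,\mathbf 1_{|y|>T},
\]
where $E(y)=\sup_{|\theta|\le M}\varphi(y-\theta)$ is the envelope. The tail contributes $\int_{|y|>T}E\lesssim e^{-T^2/3}\le u^2$. On $|y|\le T$, the Hellinger size of the bracket is controlled by $\int (\upsilon-\ell)^2/f_G\lesssim u^2\int\varphi^2/f_G\lesssim u^2$, using $f_G\gtrsim e^{-\sigma^2/2}\varphi$.

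The main obstacle is exactly this last step: making the bracket width small in Hellinger distance, rather than in weighted $L_2$, uniformly over the region where $f_G$ is tiny. Concretely, on $|y|\le T$ the ratio $\varphi/f_G$ can be as large as $e^{MT}$, and the polynomial errors must be controlled pointwise, not just in $L_2$. I would handle this by refining the net to scale $u\,e^{-O(MT)}/\mathrm{poly}(k)$. Then I would convert $L_2(\varphi_T)$ closeness of degree-$k$ polynomials to sup-norm closeness on $[-T,T]$ via a Nikolskii-type inequality, which loses only a factor $e^{O(k)}$ or $e^{O(T^2)}$. Each such refinement changes $\log(r/u)$ only by $O(\log\frac1u)$ per coordinate, and I would check that this keeps the product structure $\log\frac1u\cdot\log\frac ru$ up to constants. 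If instead the pointwise loss turned out to be $(\log\frac1u)^2$ per coordinate, the argument would fall back to the global rate, so this bookkeeping is the crux of the proof.
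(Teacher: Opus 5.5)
Your outer frame matches the paper's: it also feeds a local bracketing-entropy integral into Wong--Shen. The bound it proves, $\log N_{[]}(u,\mathcal F_M(r),\hellinger)\lesssim_M L(1+t)$ with $u=re^{-t}$ and $L=\log(e^e/r)$, is your target, which should read $\log\frac1u\,(1+\log\frac ru)$ since as written it vanishes at $u=r$. The gap is in how you prove that bound, at exactly the step you call the crux, and your bookkeeping there is wrong.

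The integral equals $r\int_0^{\log(1/r)}e^{-t}\sqrt{\log N_{[]}(re^{-t})}\,dt$, so it is governed by $u\asymp r$. There you need $\log N_{[]}\lesssim\log\frac1u$. With $k\asymp\log\frac1u/\log\log\frac1u$ coordinates this leaves only $O(\log\log\frac1u)$ nats per coordinate; with $k\asymp\log\frac1u$ it leaves only $O(1)$. Hence every change of norm may cost at most a $\polylog(1/u)$ factor in scale. Your plan violates this twice.

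\begin{itemize}
\item The comparison $w_H\ge e^{-M|y|}\varphi$ on $[-T,T]$ inflates the radius by $e^{O(MT)}$, adding $O(\sqrt{\log(1/u)})$ per coordinate. This alone makes the entropy at $u\asymp r$ of order $(\log\frac1u)^{3/2}/\log\log\frac1u$, and the rate no better than $(\log n)^{3/2}/(n\log\log n)$. It is not harmless.
\item Worse, converting $L_2(\varphi_T)$-closeness to $\sup_{[-T,T]}$-closeness costs $\max_{[-T,T]}\varphi^{-1/2}\asymp e^{T^2/4}=u^{-\Theta(1)}$, because your tail envelope forces $T\gtrsim\sqrt{\log(1/u)}$. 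That is an additive $\Theta(\log\frac1u)$ per coordinate. It does not ``keep the product structure'': $k(\log\frac ru+\Theta(\log\frac1u))\asymp k\log\frac1u$ is the global entropy $(\log\frac1u)^2/\log\log\frac1u$, and you are back to $(\log n)^2/(n\log\log n)$.
\end{itemize}

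The loss comes from demanding uniform absolute accuracy $u$ for $g$ on $[-T,T]$, instead of accuracy relative to the local density scale. Separately, \prettyref{eq:w-phi} and \prettyref{lmm:poly-f} need $(G+H)/2$ to be centered. You can arrange this for the fixed $G$, but not simultaneously for all $H$ in the ball.

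What is missing are the two devices the paper uses to keep every loss polylogarithmic.

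\begin{itemize}
\item \textbf{An $H$-independent norm in which the local ball has radius $O_M(r)$ with no inflation.} The paper works with likelihood ratios $f/f_0$ in $L_2(f_0)$, i.e.\ with $\chi^2(f\|f_0)$. By \prettyref{eq:compact-chi2-hellinger} (Jia--Polyanskiy--Wu), this is at most $A_M^2\hellinger^2(f,f_0)$, so the approximants in $V_m(f_0)$ lie in a ball of radius $O_M(\delta)$. Then \prettyref{lmm:l2ratiohellinger} converts $L_2(f_0)$-closeness at scale $\rho^2/\delta$ back to Hellinger closeness $\rho$, at a cost of only $\log(e+\delta^2/\rho^2)$ per coordinate. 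This gives local \emph{covering} (\prettyref{thm:localcovering}).
\item \textbf{Upgrading covering to bracketing by a square-root envelope, not by brackets around polynomial centres.} The paper does not use a global Nikolskii inequality against $\varphi$. It applies \prettyref{lmm:cover-to-bracket-full} with the envelope of \prettyref{lmm:scale-dependent-full-envelope}. Gauss quadrature reduces to $k$-atomic mixtures with pointwise error $(C_M/k)^{k/4}\varphi^{1/4}$. For those, the Nikolskii inequality for exponential sums (\prettyref{lmm:nikolskii-exponential-sums}) is applied on windows $I_y$ of length $\asymp 1/(1+|y|)$, on which all component Gaussians are comparable. This gives $|\sqrt f-\sqrt g|^2\lesssim k(1+|y|)s^2$ and costs only the factor $\sqrt{k_uL_u}$ in scale ($s_u\asymp u/\sqrt{k_uL_u}$). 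That is $O(\log\log\frac1u)$ per coordinate, which $k\asymp L/\log L$ absorbs.
\end{itemize}

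Without substitutes for these two steps, your argument yields only the global rate, not $\log n/n$.
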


Coupled with the Regret-Hellinger inequality in \prettyref{thm:ub-compact-intro}, 
Theorems \ref{thm:lcb-risk}--\ref{thm:npmle-risk} immediately lead to the regret bounds in
Corollaries \ref{cor:lcb-regret}--\ref{cor:npmle-regret} in \prettyref{sec:statsapp}.
Both theorems are direct consequences of new results on the \emph{local} Hellinger covering and bracketing entropy for Gaussian mixtures,\footnote{In fact, for Gaussian mixtures whose mixing distributions are  compactly supported, it is known that the optimal (minimax) Hellinger
rate is \textit{determined} by the local Hellinger entropy \cite{jia.polyanskiy.wu.2023}.} which are given in \prettyref{sec:local-covering-number} and \prettyref{sec:local-bracketing-number}, respectively. For completeness, \prettyref{sec:remaining-proof} provides the proof of Theorems \ref{thm:lcb-risk}--\ref{thm:npmle-risk} and Corollaries \ref{cor:lcb-regret}--\ref{cor:npmle-regret}.

\subsection{Local Hellinger entropy}
\label{sec:local-covering-number}
For a metric
space $(\mathcal A,d)$, write $\coverN(r,\mathcal F,d)$ for the minimal cardinality of an
$r$-covering for $\calF\subset\calA$, that is, the minimum number of balls of radius $r$ whose union covers $\calF$. 
Denote the collection of Gaussian mixtures
\[
\calF_M:=\{f_G:G\in\mathcal P([-M,M])\}.
\]
 The local Hellinger covering number of this model class is
\[
\coverN_{\mathrm{loc},H}(\calF_M,\eps)
:=
\sup_{f_0\in\calF_M,\ \delta\ge\eps}
\coverN\left(
\delta/2,\,
\calF_M\cap\{f:\Hell(f,f_0)\le\delta\},\,
\Hell
\right).
\]

The following result finds the optimal rate for this local entropy. 

\begin{theorem}[Local Hellinger entropy for compactly supported priors]
\label{thm:localcovering}
Fix $M<\infty$.  There is a constant $C_M<\infty$ such that, for every
$f_0\in\calF_M$ and all $0<\rho\le\delta\le 0.001$,
\[
\log
\coverN\left(
\rho,\,
\calF_M\cap\{f:\Hell(f,f_0)\le\delta\},\,
\Hell
\right)
\le
C_M
\frac{\Lambda_{\rho,\delta}}{\log\Lambda_{\rho,\delta}}
\log\left(e+\frac{\delta^2}{\rho^2}\right),
\qquad
\Lambda_{\rho,\delta}
=
\log\left(e^e\frac{\delta}{\rho^2}\right).
\]
Consequently, for all sufficiently small $0<\eps<e^{-e}$,
\begin{equation}
\log \coverN_{\mathrm{loc},H}(\calF_M,\eps)
\le
C_M\frac{\log(1/\eps)}{\log\log(1/\eps)}.
\label{eq:localcovering}
\end{equation}
\end{theorem}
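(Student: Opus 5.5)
The plan is to build the covering from moment-matching plus polynomial approximation, with the weighted $L_2(w)$ machinery of \prettyref{sec:compact} as the bridge between Hellinger distance and moments. Fix $f_0=f_{G_0}$ and consider any $f_G,f_H$ in the ball $\Hell(\cdot,f_0)\le\delta$. The key identity is $\hellinger^2(f_G,f_H)\asymp \|g\|_{L_2(w)}^2$ with $g=(f_G-f_H)/\varphi$ and $w=\varphi^2/f$, where $f=(f_G+f_H)/2$; this follows from \prettyref{eq:delta}--\prettyref{eq:delta-eps}. By the Hermite expansion in the proof of \prettyref{lmm:poly-f}, $g=\sum_j \frac{m_j(G)-m_j(H)}{j!}H_j$. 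The tail beyond degree $k$ is at most $M^2(eM^2/k)^k$ in $L_2(w)$ after using $w\lesssim\varphi$ (valid after centering, with $e^{\sigma^2/2}\le e^{M^2/2}$).

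We take $k\asymp \Lambda/\log\Lambda$ with $\Lambda=\Lambda_{\rho,\delta}$. This makes the tail at most $\rho^2\cdot(\rho^2/\delta)^{C}$, and hence negligible. It follows that the Hellinger distance between two members of the ball is controlled, up to $\rho/4$, by the weighted norm of the degree-$k$ truncation $p_{G,H}=\sum_{j\le k}\frac{\Delta m_j}{j!}H_j$.

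The localization step uses the ball to pin down the weights. All members $f_G$ of the ball are close to $f_0$ in Hellinger distance $\le\delta$. Comparing each $w$ with $w_0=\varphi^2/f_0$ on the bulk, where $f\asymp f_0$ except on a set of small $f_0$-mass, we may measure every pair in the single fixed norm $\|\cdot\|_{L_2(w_0)}$ restricted to $\mathcal P_k$. Then the map $G\mapsto P_G:=\sum_{j\le k}\frac{m_j(G)-m_j(G_0)}{j!}H_j\in\mathcal P_k$ sends the ball into a Euclidean ball of radius $\lesssim\delta$ in the $(k+1)$-dimensional space $(\mathcal P_k,\|\cdot\|_{L_2(w_0)})$, up to additive error $\rho/4$. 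Covering the image needs at most $(1+C\delta/\rho)^{k+1}$ points, which gives
\[
\log N\le (k+1)\log(e+C\delta^2/\rho^2)\lesssim \frac{\Lambda}{\log\Lambda}\log\Big(e+\frac{\delta^2}{\rho^2}\Big).
\]
Any two measures whose images lie within $\rho/2$ are then $\rho$-close in Hellinger distance, and choosing one representative per nonempty cell yields a proper covering. The consequence \prettyref{eq:localcovering} follows by taking $\rho=\delta/2$ and $\delta\ge\epsilon$, so that $\Lambda\asymp\log(1/\delta)\le\log(1/\epsilon)$ and $\log(e+\delta^2/\rho^2)=O(1)$.

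The main obstacle is the norm comparison: the weight $w$ depends on the pair $(G,H)$, while the covering requires one fixed norm. My first attempt uses the equivalence $\|p\|_{L_2(w)}\asymp\|p\|_{L_2(w_0)}$ on $\mathcal P_k$ up to constants, for $f$ within Hellinger distance $\delta$ of $f_0$. To get this, split the real line into the region where $f/f_0\in[1/2,2]$ and its complement, which has small $f_0$-mass. On the complement, control polynomials of degree $k$ by a Nikolskii/Remez-type bound in $L_2(\varphi)$, which costs at most $e^{O(k)}$, and absorb that factor into $(\rho^2/\delta)^{C}$ by the choice of $k$. If this fails, the fallback is to cover using the unweighted $\|\cdot\|_{L_2(\varphi)}$ on truncated moment vectors, accepting the crude comparison $w\ge \varphi^2/f_0\cdot\frac12$ only where needed.
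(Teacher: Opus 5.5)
Your route is genuinely different from the paper's, and it works when $\rho$ is not too small relative to $\delta$, but the norm-comparison step fails as stated for general $\rho\le\delta$. The paper never uses a pair-dependent weight. It works with likelihood ratios $r_f=f/f_0$ in the fixed space $L_2(f_0)$ and localizes with the $\chi^2\lesssim\hellinger^2$ comparison of \cite[Theorem~21]{jia.polyanskiy.wu.2023}. It then approximates $r_f-1$ inside the fixed $(m+1)$-dimensional space $V_m(f_0)$, covers at the finer scale $\rho^2/\delta$, and returns to Hellinger through \prettyref{lmm:l2ratiohellinger}. That is why $\Lambda_{\rho,\delta}=\log(e^e\delta/\rho^2)$ and $\log(e+\delta^2/\rho^2)$ appear. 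Your symmetric weight $w=\varphi^2/f$ makes $\|g\|_{L_2(w)}$ two-sided equivalent to $\hellinger(f_G,f_H)$. So you can cover at scale $\rho$ directly and never need the $\chi^2$ comparison. The price is transferring from the pair-dependent $L_2(w)$ to the fixed $L_2(w_0)$ on $\mathcal P_k$, and that transfer is where the gap is.

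The bad set $\{f/f_0\notin[1/2,2]\}$ has $f_0$-mass $O(\delta^2)$. This comes from the Hellinger localization and does not improve as $\rho\downarrow 0$. On $[-C\sqrt k,C\sqrt k]$, where $f_0\ge e^{-O(k)}$, you convert this mass to Lebesgue measure and apply a Nikolskii bound, which costs $e^{O(k)}$. (The unbounded part of the bad set needs a restricted-range bound instead.) So what you actually need is $e^{O(k)}\delta^2\ll 1$, i.e.\ $k\le c_M\log(1/\delta)$. The factor cannot be absorbed into $(\rho^2/\delta)^C$, which only controls the Hermite tail.

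With $k\asymp\Lambda/\log\Lambda$, the condition holds for $\rho=\delta/2$, so \eqref{eq:localcovering} is fine. It also holds for $\rho\ge\delta^{C}$ with $\delta$ small, which is the range used later for bracketing. It fails when $\rho$ is much smaller than $\delta$, and there the equivalence is genuinely false for large $k$. Let $G_0$ put mass $\tau\asymp\delta^2$ at $M$ and the rest at $0$, and take $f=\varphi$, which lies in the ball. Then $w/w_0=f_0/\varphi\to\infty$ as $y\to\infty$. Since polynomials are dense in $L_2(w_0)$, no constant uniform in $k$ is possible. So the first display is not established for all $0<\rho\le\delta$.

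The repair is a case split. If $k>c_M\log(1/\delta)$, then (for small $\delta$) $\log(\delta/\rho)\gtrsim\Lambda$. In that case drop localization and use the crude global bounds $\varphi e^{-M|y|}\le w,w_0\le e^{M^2/2}\varphi e^{M|y|}$, together with restricted range for Hermite-weighted polynomials. This gives norm equivalence on $\mathcal P_k$ up to $e^{O(\sqrt k)}$. It adds only $O(k^{3/2})\lesssim k\log(\delta/\rho)$ to the log-covering number. Your stated fallback is too vague to supply this.

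A minor point: you cannot center every $(G+H)/2$ simultaneously. So bound the Hermite tail using $w\le e^{M^2/2}\varphi e^{M|y|}$, which costs a harmless $e^{O(\sqrt j)}$ per term, rather than $w\le e^{\sigma^2/2}\varphi$.
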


To prove this result, we first record an elementary lemma.

\begin{lemma}[Local likelihood-ratio comparison]
\label{lmm:l2ratiohellinger}
Let $f_0$ be a density and $0\le R<\infty$.  
If $0<\rho\le\delta$, $a,b\ge0$, $\int a f_0=\int b f_0=1$, and
\[
\|a-1\|_{L_2(f_0)}\le R\delta,
\qquad
\|b-1\|_{L_2(f_0)}\le R\delta,
\qquad
\|a-b\|_{L_2(f_0)}
\le
\frac{1}{8(1+R)}
\frac{\rho^2}{\delta},
\]
then
\begin{equation}
\label{eq:l2-ratio-to-hellinger}
\int_{\mathbb R}(\sqrt a-\sqrt b)^2f_0\le\rho^2.
\end{equation}
\end{lemma}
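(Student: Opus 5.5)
The plan is to write $(\sqrt a-\sqrt b)^2=\frac{(a-b)^2}{(\sqrt a+\sqrt b)^2}$ and split $\mathbb R$ according to whether $a$ or $b$ is bounded away from zero. On that region the denominator is harmless and the $L_2(f_0)$ bound on $a-b$ applies directly. Its complement is the region where both ratios are far below $1$, which by the closeness of $a,b$ to $1$ has tiny $f_0$-mass. There we use the cruder pointwise bound $(\sqrt a-\sqrt b)^2\le|a-b|$, which follows from $|\sqrt a-\sqrt b|\le \sqrt a+\sqrt b$, combined with Cauchy--Schwarz. Throughout we use $\rho\le\delta$.

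\textbf{Main region.} Let $S=\{y:\max(a(y),b(y))\ge 1/4\}$. On $S$ we have $(\sqrt a+\sqrt b)^2\ge 1/4$. Hence
\[
\int_S(\sqrt a-\sqrt b)^2f_0\le 4\|a-b\|_{L_2(f_0)}^2\le \frac{4}{64(1+R)^2}\frac{\rho^4}{\delta^2}\le \frac{\rho^2}{16},
\]
where the last step uses $\rho\le\delta$.

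\textbf{Complement.} On $S^c$ both $a,b<1/4$, so $|a-1|\ge 3/4$. Chebyshev's inequality then gives $f_0(S^c)\le \frac{16}{9}\|a-1\|_{L_2(f_0)}^2\le \frac{16}{9}R^2\delta^2$. Using the pointwise bound and Cauchy--Schwarz,
\[
\int_{S^c}(\sqrt a-\sqrt b)^2f_0\le\int_{S^c}|a-b|f_0\le \|a-b\|_{L_2(f_0)}\,f_0(S^c)^{1/2}\le \frac{\rho^2}{8(1+R)\delta}\cdot\frac{4}{3}R\delta\le\frac{\rho^2}{6}.
\]
Adding the two contributions gives at most $\rho^2$, which is \eqref{eq:l2-ratio-to-hellinger}.

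The only delicate point is the complement region. A global Cauchy--Schwarz bound $\int|a-b|f_0\le\|a-b\|_{L_2(f_0)}$ only yields $O(\rho^2/\delta)$, which is too weak. The extra factor of $\delta$ has to come from the small mass of $S^c$, and this is exactly where the hypotheses $\|a-1\|_{L_2(f_0)},\|b-1\|_{L_2(f_0)}\le R\delta$ enter, with the factor $(1+R)^{-1}$ in the assumption absorbing the $R$. The normalizations $\int af_0=\int bf_0=1$ are not needed for this argument.
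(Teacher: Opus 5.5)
Your proof is correct. It shares the paper's overall structure (split according to whether $a,b$ are both small), but it balances the two regions differently.

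The paper uses the parameter-dependent threshold $\gamma=\rho^2/(16(1+R)^2\delta^2)$:
\begin{itemize}
\item on $\{a+b\ge\gamma\}$ it pays $\gamma^{-1}\|a-b\|_{L_2(f_0)}^2\le\rho^2/4$;
\item on the complement it uses the crude bound $(\sqrt a-\sqrt b)^2\le a+b$ together with the pointwise domination $a\le\frac{\gamma}{(1-\gamma)^2}(a-1)^2$, and likewise for $b$.
\end{itemize}
So in the paper the smallness on the bad set comes from the values of $a,b$ themselves lying below $\gamma\asymp\rho^2/\delta^2$.

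You instead fix the threshold at $1/4$. This over-controls the main region, which costs only $O(\rho^4/\delta^2)$. You then extract the needed factor of $\delta$ on the complement from Cauchy--Schwarz against its Chebyshev mass, using the sharper pointwise bound $(\sqrt a-\sqrt b)^2\le|a-b|$.

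Your route has three advantages:
\begin{itemize}
\item it avoids tuning $\gamma$ and the $(1-\gamma)^{-2}$ bookkeeping;
\item it gives slightly better constants ($\tfrac{11}{48}\rho^2$ versus the paper's $\tfrac12\rho^2$);
\item it actually proves the parameter-free inequality $\int(\sqrt a-\sqrt b)^2f_0\le 4\|a-b\|^2+\tfrac43\|a-b\|\min(\|a-1\|,\|b-1\|)$, with norms in $L_2(f_0)$.
\end{itemize}
That inequality shows only one of the two closeness hypotheses $\|a-1\|,\|b-1\|\le R\delta$ is needed. The paper's argument uses both, because it integrates $a+b$ over the small set.

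As you note, neither argument uses the normalization $\int af_0=\int bf_0=1$.
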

\begin{proof}[Proof of \prettyref{lmm:l2ratiohellinger}]
Define
$\gamma:=\frac{\rho^2}{16(1+R)^2\delta^2}$.
  On
$A=\{a+b\ge\gamma\}$,
\[
(\sqrt a-\sqrt b)^2\le\frac{(a-b)^2}{a+b}
\le \gamma^{-1}(a-b)^2.
\]
Consequently,
\[
\begin{aligned}
\int_A(\sqrt a-\sqrt b)^2f_0
\le
\gamma^{-1}\|a-b\|_{L_2(f_0)}^2 
\le
\frac{1}{\gamma (8(1+R))^2}
\frac{\rho^4}{\delta^2}
=\frac{\rho^2}{4}.
\end{aligned}
\]

On $A^c$, since $a<\gamma$ and $b<\gamma$, we have
$|a-1|\ge 1-\gamma$ and $|b-1|\ge 1-\gamma$. 
Hence
\[
\mathbf 1_{A^c}a\le
\frac{\gamma}{(1-\gamma)^2}\mathbf 1_{A^c}(a-1)^2,
\qquad
\mathbf 1_{A^c}b\le
\frac{\gamma}{(1-\gamma)^2}\mathbf 1_{A^c}(b-1)^2.
\]
Therefore
\[
\int_{A^c}(\sqrt a-\sqrt b)^2\,f_0
\le \int_{A^c}(a+b)\,f_0
\le \frac{2\gamma R^2}{(1-\gamma)^2}\delta^2
\leq 4\gamma R^2\delta^2
=
\frac{R^2}{4(1+R)^2}\rho^2
\le
\frac{\rho^2}{4},
\]
where we used $\gamma\le 1/16$ because $\rho\le\delta$.
Adding the contributions over $A$ and $A^c$ proves
\eqref{eq:l2-ratio-to-hellinger}.
\end{proof}

 Next, we outline the proof of the local Hellinger entropy in \prettyref{thm:localcovering}. 
The key idea is to reduce the infinite-dimensional Hellinger ball to a finite-dimensional ball whose dimension scales as the right side of \prettyref{eq:localcovering}. 
Compared with the local entropy for finite Gaussian mixtures (but in high dimensions) in \cite[Lemma 4.1 and Theorem 4.2]{doss2020optimal}, obtained by showing the Hellinger distance is comparable to the Hilbert-Schmidt norm between moment tensors of the mixing distributions, 
we do not prove such a characterization explicitly. Instead, we apply polynomial approximation to reduce a local Hellinger ball to a finite-dimensional ball and convert the covering therein back to a Hellinger covering. This results in a better (and optimal) entropy bound than the prior result in \cite{NW21} using covering in the moment sequence space. 

The argument proceeds in three steps:
\begin{enumerate}[wide]
\item \emph{Localize the likelihood ratios in $L_2$.}
While squared Hellinger distance is not comparable to $\chi^2$-divergence in general, 
\cite{jia.polyanskiy.wu.2023} shows this is in fact the case for Gaussian mixtures with compactly supported mixing distributions.
Specifically, 
\cite[Theorem~21]{jia.polyanskiy.wu.2023} shows the existence of some constant $A_M<\infty$ such
that, for all $f,g\in\calF_M$,
\begin{equation}
\label{eq:compact-chi2-hellinger}
\chi^2(f\|g)
=
\int_{\mathbb R}\frac{(f-g)^2}{g}
=
\left\|f/g-1\right\|_{L_2(g)}^2
\le
A_M^2\Hell^2(f,g).
\end{equation}
Consequently,
\[
\Hell(f,f_0)\le\delta
\quad\Longrightarrow\quad
\|r_f-1\|_{L_2(f_0)}\le A_M\delta, \qquad r_f \equiv f/f_0.
\]

\item \emph{Pass to a finite-dimensional ball.}
Polynomial approximation of $f/\varphi$ yields an approximation of $r_f-1$ by
an element $w_f$ of the $(m+1)$-dimensional space
\begin{equation}
V_m(f_0)
=
\left\{
x\mapsto\frac{\varphi(x)p(x)}{f_0(x)}
:
p\text{ is a polynomial of degree at most }m
\right\}.
\label{eq:Vm}
\end{equation}
We choose $m = \Theta_M(\frac{\log (\delta/\rho^2)}{\log\log(\delta/\rho^2)})$ with $\rho
= \delta/2$ so that, by moment matching we have
\[
\|r_f-1-w_f\|_{L_2(f_0)}
=O_M\left(\frac{\rho^2}{\delta}\right).
\]
Since $\rho\le\delta$, the first step then gives
\[
\|w_f\|_{L_2(f_0)}=O_M(\delta).
\]
Thus the local Hellinger ball is represented, up to an
$O_M(\rho^2/\delta)$ error, inside a ball of radius $O_M(\delta)$ in
$V_m(f_0)$.

\item \emph{From finite-dimensional covering back to Hellinger covering.}
Applying the volumetric bound, we obtain an $L_2(f_0)$-covering of the preceding finite-dimensional ball, denoted by $\{w_{f_j}: j=1,\ldots,N\}$, such that 
$\log N = O_M(m)$ and any $w_f$ is covered by some $w_{f_j}$ such that
\[
\|w_f-w_{f_j}\|_{L_2(f_0)}
=O_M\left(\frac{\rho^2}{\delta}\right).
\]
Adding the two approximation errors gives
\[
\|r_f-r_{f_j}\|_{L_2(f_0)}
=O_M\left(\frac{\rho^2}{\delta}\right).
\]
Both $r_f$ and $r_{f_j}$ are
$O_M(\delta)$-close to $1$ by the first step, so
\prettyref{lmm:l2ratiohellinger} implies $\Hell(f,f_j)\le\rho$.  Hence this
finite-dimensional covering yields the desired Hellinger $\rho$-covering.
\end{enumerate}

\begin{proof}[Proof of \prettyref{thm:localcovering}]
Throughout the proof all constants depend only on $M$ and may change from line
to line.  

Fix a center $f_0=f_{G_0}\in\calF_M$ and set
$r_f\equiv f/f_0$.  By shifting and replacing $M$ by $2M$, we may assume
without loss of generality that $G_0$ has zero mean. For each integer $m\ge1$,
define the $(m+1)$-dimensional linear space $V_m(f_0)$ in \prettyref{eq:Vm}.
We first quantify the approximation accuracy of $V_m(f_0)$ for $f/f_0$: there are
constants $B_M,C_M<\infty$, independent of $f_0$, such that
\begin{equation}
\label{eq:ratio-polynomial-approx}
\sup_{f\in\calF_M}\inf_{v\in V_m(f_0)}
\|r_f-v\|_{L_2(f_0)}
\le B_M\left(\frac{C_M}{m}\right)^{m/2}.
\end{equation}
Indeed, by Jensen's inequality,
\[
\frac{f_0(x)}{\varphi(x)}
=\int e^{ux-u^2/2}\,G_0(du)
\ge
\exp\left\{-\frac12\int u^2\,G_0(du)\right\}
\ge c_M.
\]
This shows $V_m(f_0) \subset L_2(f_0)$.
In addition, for every polynomial $p$ of degree
at most $m$,
\[
\left\|r_f-\frac{\varphi p}{f_0}\right\|_{L_2(f_0)}^2
=
\int_{\mathbb R}\left(\frac{f}{\varphi}-p\right)^2
\frac{\varphi^2}{f_0}\,dx
\le
C_M\int_{\mathbb R}\left(\frac{f}{\varphi}-p\right)^2\varphi\,dx .
\]
Let $H_j$ denote the Hermite polynomials, satisfying
\[
e^{ux-u^2/2}
=\sum_{j=0}^{\infty}\frac{u^j}{j!}H_j(x),
\qquad
\int_{\mathbb R}H_i(x) H_j(x)\varphi(x)\,dx=j! \mathbf{1}_{i=j}.
\]
Then
\begin{equation}
\frac{f_G}{\varphi}
=
\sum_{j=0}^{\infty}
\frac{m_j(G)}{j!}H_j,
\qquad
m_j(G)=\int u^j\,G(du).
\label{eq:hermite-exp}
\end{equation}
Taking
\[
p_m=\sum_{j=0}^m\frac{m_j(G)}{j!}H_j
\]
and using $|m_j(G)| \leq (2M)^j$, we conclude \eqref{eq:ratio-polynomial-approx} from
\[
\int_{\mathbb R}\left(\frac{f_G}{\varphi}-p_m\right)^2\varphi\,dx
=
\sum_{j>m}\frac{m_j(G)^2}{j!}
\le
\sum_{j>m}\frac{(2M)^{2j}}{j!}
\le
B_M\left(\frac{C_M}{m}\right)^m.
\]

Now fix $0<\rho\le\delta\le2$ and let
$
\mathcal B(f_0,\delta)=\{f\in\calF_M:\Hell(f,f_0)\le\delta\}.
$
By \eqref{eq:compact-chi2-hellinger}, every $f\in\mathcal B(f_0,\delta)$
satisfies
\begin{equation}
\label{eq:local-ratio-radius}
\|r_f-1\|_{L_2(f_0)}\le A_M\delta.
\end{equation}
Let $R=2A_M+1$, and let $\eta=\frac{1}{8(1+R)}$.
Choose
\[
m=m_{\rho,\delta}
=\left\lceil D_M
\frac{\Lambda_{\rho,\delta}}{\log\Lambda_{\rho,\delta}}\right\rceil
\]
with $D_M$ large enough so that the right side of
\eqref{eq:ratio-polynomial-approx} is at most
$\eta\rho^2/(16\delta)$.  For each $f\in\mathcal B(f_0,\delta)$, apply
\eqref{eq:ratio-polynomial-approx} to both $f$ and $f_0$.  Thus there are
$v_f,v_0\in V_m(f_0)$, with $v_0$ depending only on $f_0$, such that
\[
\|r_f-v_f\|_{L_2(f_0)}\le \eta\frac{\rho^2}{16\delta},
\qquad
\|1-v_0\|_{L_2(f_0)}\le \eta\frac{\rho^2}{16\delta}.
\]
Since $V_m(f_0)$ is linear and $r_{f_0}=1$, the function $w_f\equiv v_f-v_0$ belongs
to $V_m(f_0)$ and satisfies
\begin{equation}
\label{eq:centered-ratio-approx}
\|r_f-1-w_f\|_{L_2(f_0)}\le \eta\frac{\rho^2}{8\delta}.
\end{equation}

Combining \eqref{eq:local-ratio-radius} and
\eqref{eq:centered-ratio-approx}, all $w_f$'s lie in the $L_2(f_0)$-ball of
radius $(A_M+\eta/8)\delta$ inside the $(m+1)$-dimensional space $V_m(f_0)$.
First cover this ball at radius $\eta\rho^2/(4\delta)$, 
then choose one element of $\{w_f:f\in\mathcal B(f_0,\delta)\}$ from each
covering ball that intersects this set.  This gives a proper
$\eta\rho^2/(2\delta)$-covering of $\{w_f:f\in\mathcal B(f_0,\delta)\}$ in
$L_2(f_0)$-norm.
Denote this cover by $\{w_{f_j}:1\le j\le N\}$, where 
$f_1,\ldots,f_N\in\mathcal B(f_0,\delta)$ and, by the Euclidean volume bound\footnote{Specifically, for $a\in\reals^{m+1}$, denote by $p_a$ the degree-$m$ polynomial with coefficients given by $a$. In view of the definition of $V_m(f_0)$ in \prettyref{eq:Vm}, the volumetric bound is applied to the norm ball in $\reals^{m+1}$ defined by $\|a\| := \|p_a \varphi/f_0\|_{L_2(f_0)}$, which is a weighted Euclidean norm.}
(cf.~e.g.~\cite[Corollary 27.4]{polyanskiy.wu.2025}),
\[
N\le
\left(1+\frac{C_M\delta^2}{\eta\rho^2}\right)^{m+1}.
\]
For every $f\in\mathcal B(f_0,\delta)$, choose $j$ such that
$\|w_f-w_{f_j}\|_{L_2(f_0)}\le\eta\rho^2/(2\delta)$.  Then
\[
\begin{aligned}
\|r_f-r_{f_j}\|_{L_2(f_0)}
&\le
\|r_f-1-w_f\|_{L_2(f_0)}
+\|w_f-w_{f_j}\|_{L_2(f_0)}
+\|w_{f_j}-(r_{f_j}-1)\|_{L_2(f_0)}  \\
&\le \eta\frac{\rho^2}{8\delta}
+\eta\frac{\rho^2}{2\delta}
+\eta\frac{\rho^2}{8\delta}
\le \eta\frac{\rho^2}{\delta}.
\end{aligned}
\]

Finally, together with \eqref{eq:local-ratio-radius}, applying 
\eqref{eq:l2-ratio-to-hellinger} in 
\prettyref{lmm:l2ratiohellinger} to $a=r_f$ and $b=r_{f_j}$ gives
$\Hell(f,f_j)
=\left\|\sqrt{r_f}-\sqrt{r_{f_j}}\right\|_{L_2(f_0)}
\le \rho$. 
Therefore
\[
\coverN\left(\rho,
\calF_M\cap\{f:\Hell(f,f_0)\le\delta\},\Hell\right)
\le
\left(1+\frac{C_M\delta^2}{\rho^2}\right)^{m_{\rho,\delta}+1}.
\]
Taking logarithms gives the first claim.  The displayed bound for
$\coverN_{\mathrm{loc},H}$ follows by taking $\rho=\delta/2$ and then the
supremum over $f_0\in\calF_M$ and $\delta\ge\eps$.
\end{proof}

\subsection{Local bracketing entropy}
\label{sec:local-bracketing-number}

Let $\mathcal A$ be a class of densities with respect to the Lebesgue measure.
A Hellinger $\eta$-bracket is a pair of nonnegative functions $(L,U)$ with
$L\le U$ pointwise and
\[
\|\sqrt U-\sqrt L\|_{L_2(dx)}\le \eta .
\]
The Hellinger bracketing number $\bracketN(\eta,\mathcal A,\Hell)$ is the
smallest integer $N$ for which there are Hellinger $\eta$-brackets
$(L_j,U_j)_{j=1}^N$ such that, for every $f\in\mathcal A$, one has
$L_j\le f\le U_j$ pointwise for some $j$.  
Similar to the definition of covering numbers, the bracket endpoints
need not belong to the original class or be probability densities.

Clearly, bracketing is a stronger notion than covering. Nevertheless,
using an additional argument, we can upgrade the result in Section~\ref{sec:local-covering-number} from local covering to bracketing. To this end, a standard method is to bound the so-called \emph{envelope} function which controls the pointwise difference of densities. This general result is stated as follows (see \cite[Theorem 2.7.11]{wellner2013weak} for a similar result):

\begin{lemma}[Upgrading covering to bracketing via envelope]
\label{lmm:cover-to-bracket-full}
Let $\mathcal A$ be a class of densities.  Suppose that for some
$s,u>0$ there is a nonnegative function $B(\cdot)$ such that
\[
|\sqrt f-\sqrt g|\le B
\quad\text{whenever }f,g\in\mathcal A\text{ and }\Hell(f,g)\le s.
\]
If $\|B\|_2\le u/2$, then
\[
\bracketN(u,\mathcal A,\Hell)
\le
\coverN(s/2,\mathcal A,\Hell).
\]
\end{lemma}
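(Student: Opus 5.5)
The plan is to construct the brackets directly from the centers of a minimal Hellinger $s/2$-cover, widening each center pointwise by the envelope $B$.

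\emph{Step 1 (centers).} Let $N=\coverN(s/2,\mathcal A,\Hell)$ and fix a cover by balls of radius $s/2$ with centers $h_1,\dots,h_N$. The centers need not lie in $\mathcal A$, but the envelope hypothesis only concerns pairs inside $\mathcal A$. So for each $j$ whose ball meets $\mathcal A$, I replace $h_j$ by some $f_j\in\mathcal A$ in that ball; balls that miss $\mathcal A$ are discarded. By the triangle inequality, every $f\in\mathcal A$ lying in the $j$th ball satisfies $\Hell(f,f_j)\le s/2+s/2=s$. This is the reason the cover is taken at radius $s/2$.

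\emph{Step 2 (brackets).} For each retained $j$ define
\[
L_j:=\bigl((\sqrt{f_j}-B)_+\bigr)^2,\qquad U_j:=\bigl(\sqrt{f_j}+B\bigr)^2 .
\]
Both are nonnegative and $L_j\le U_j$ pointwise. Take $f\in\mathcal A$ and choose $j$ as in Step 1. Then $\Hell(f,f_j)\le s$, so the hypothesis gives $|\sqrt f-\sqrt{f_j}|\le B$ pointwise. Hence
\[
(\sqrt{f_j}-B)_+\le\sqrt f\le\sqrt{f_j}+B,
\]
and squaring gives $L_j\le f\le U_j$.

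\emph{Step 3 (bracket size).} Pointwise,
\[
0\le \sqrt{U_j}-\sqrt{L_j}=\sqrt{f_j}+B-(\sqrt{f_j}-B)_+\le 2B .
\]
Therefore $\|\sqrt{U_j}-\sqrt{L_j}\|_{L_2(dx)}\le 2\|B\|_2\le u$, so each $(L_j,U_j)$ is a Hellinger $u$-bracket. There are at most $N$ of them, which yields $\bracketN(u,\mathcal A,\Hell)\le\coverN(s/2,\mathcal A,\Hell)$.

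The argument is routine. The only point needing care is Step 1: the centers must be moved into $\mathcal A$ so that the envelope hypothesis applies. This costs the factor of two in the radius, and it is exactly why the cover is taken at $s/2$ rather than $s$.
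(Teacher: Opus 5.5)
Your proposal is correct and follows essentially the same argument as the paper: pass to representatives in $\mathcal A$ from each nonempty $s/2$-ball so that $\Hell(f,f_j)\le s$, use the brackets $\bigl((\sqrt{f_j}-B)_+\bigr)^2\le f\le(\sqrt{f_j}+B)^2$, and bound the width by $2\|B\|_2\le u$. Your pointwise check that $\sqrt{U_j}-\sqrt{L_j}\le 2B$, including the case $\sqrt{f_j}<B$, is correct.
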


\begin{proof}
Take an $s/2$-cover of $\mathcal A$ and choose one representative $g_j$
from each nonempty covering ball.  If $f$ lies in the same ball as $g_j$,
then $\Hell(f,g_j)\le s$, so
$(\sqrt{g_j}-B)_+^2\le f\le(\sqrt{g_j}+B)^2$. 
The Hellinger width of this bracket is at most $2\|B\|_2\le u$.
\end{proof}

The next result provides the needed envelope bound for the Gaussian mixture class.
\begin{lemma}[Square-root envelope for Gaussian mixtures]
\label{lmm:scale-dependent-full-envelope}
For every fixed $M<\infty$ there are constants $c_M,C_M,A_M<\infty$ such
that the following holds.  For $0<u\le c_M$, define
\[
L_u=\log(e^e/u),
\qquad
k_u=\left\lceil A_M\frac{L_u}{\log L_u}\right\rceil,
\qquad
s_u=c_M\frac{u}{\sqrt{k_uL_u}} .
\]
There is a nonnegative function $B_u$, depending only on $u$ and $M$, such
that, whenever $f,g\in\calF_M$ and $\Hell(f,g)\le s_u$,
\[
|\sqrt {f(x)}-\sqrt{ g(x)}|\le B_u(x)
\qquad\text{for all }x\in\mathbb R,
\qquad
\|B_u\|_{L_2(dx)}\le u/4 .
\]
\end{lemma}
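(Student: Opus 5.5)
Write $h=\sqrt f-\sqrt g$. We build the envelope $B_u$ by splitting the real line into a bulk region $|x|\le T$ and a tail region $|x|>T$, with cutoff
\[
T = C\sqrt{L_u}.
\]

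\emph{Tail region.} Since every $f\in\calF_M$ satisfies $f(x)\le \varphi(|x|-M)$ for $|x|>M$, the crude envelope
\[
B_u(x)=2\sqrt{\varphi(|x|-M)}\,\mathbf 1\{|x|>T\}
\]
works there. Its squared $L_2$ norm is of order $e^{-(T-M)^2/2}$. Choosing $C$ large (depending on $M$) makes this at most $u^2/32$.

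\emph{Bulk region.} Here we need a pointwise bound of $h$ in terms of $\Hell(f,g)\le s$ that is uniform over the class. The plan reuses the polynomial machinery of \prettyref{lmm:poly-f} and \prettyref{lmm:bernstein}.

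\begin{enumerate}
\item Write $|\sqrt f-\sqrt g|=|f-g|/(\sqrt f+\sqrt g)$. Set $\tilde w=\varphi^2/((f+g)/2)$ and $\gamma=(f-g)/\varphi$, so that $\|\gamma\|_{L_2(\tilde w)}^2\asymp \Hell^2(f,g)\le s^2$ by \prettyref{eq:delta-eps}.
\item By \prettyref{lmm:poly-f}, after centering and inflating $M$ to $2M$ (the shift is harmless for a pointwise envelope if we take the union over shifts, or we simply use the uniform Jensen lower bound $f\ge c_M\varphi$), approximate $\gamma$ by its Hermite truncation $p_k$ of degree $k=k_u$. The $L_2(\varphi)$ error is at most $(C_M/k)^{k}$, which is $\le u^{10}$ for $A_M$ large.
\item Bound $|p_k(x)|$ pointwise by a Christoffel-function (Nikolskii-type) inequality for the weight $\tilde w\asymp_M$ a Gaussian mixture weight. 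For $|x|\le T$ we have $\tilde w(x)\ge c_M\varphi(x)\,e^{-M|x|}$. Expanding $p_k=\sum a_jq_j$ in the orthonormal basis and using Cauchy--Schwarz gives
\[
|p_k(x)|^2\le \|p_k\|^2_{L_2(\tilde w)}\sum_{j\le k}q_j(x)^2 .
\]
Comparing with Hermite functions, $\sum_{j\le k}q_j(x)^2\lesssim_M k\,e^{O(|x|\sqrt k+M|x|)}/\varphi(x)$ in the bulk.
\item The tail of the Hermite series is handled pointwise by the bound $|H_j(x)|\le \sqrt{j!}\,e^{x^2/4}$ together with $|m_j|\le M^j$.
\end{enumerate}

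Dividing by $\sqrt f+\sqrt g\ge c_M\sqrt{\varphi(x)}e^{-M|x|}$ yields
\[
|h(x)|\lesssim_M s\sqrt{k}\;e^{O(|x|(\sqrt k+M))}\sqrt{\varphi(x)}+u^{5}e^{x^2/2},\qquad |x|\le T.
\]
Take $B_u$ on the bulk to be the right-hand side. We need its $L_2(dx)$ norm to be $\le u/8$. The Gaussian factor $\sqrt{\varphi}$ absorbs $e^{O(|x|\sqrt k)}$ only up to the factor $e^{O(k)}$. That would force $s\approx u e^{-O(k)}$, not $u/\sqrt{kL_u}$.

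\emph{Main obstacle.} The real difficulty is obtaining the sharp polynomial-loss constant $\sqrt{k_uL_u}$ instead of an exponential-in-$k$ loss. For this I would refine step 3. Instead of Christoffel bounds, bound $h$ pointwise through its derivative, using the one-dimensional Sobolev inequality
\[
\|h\|_\infty^2\le \|h\|_{2}\,\|h'\|_{2}.
\]
We have $\|h\|_2=\Hell(f,g)\le s$. For $h'$, note that $(\sqrt f)'=\tfrac12\sqrt f\,(m_F(x)-x)$, which is $\lesssim (|x|+M)\sqrt f$. The regret bound \prettyref{thm:ub-compact-intro} (or its proof: $\Delta\lesssim k\delta$ via \prettyref{lmm:bernstein}) then gives
\[
\|h'\,\mathbf 1_{|x|\le T}\|_2\lesssim s\,(T+\sqrt k)\lesssim s\sqrt{L_u},
\]
plus a negligible polynomial-approximation error.

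Localizing this estimate on unit intervals $I$ via $\sup_I h^2\le \int_I h^2+2\int_I|hh'|$, summing the resulting squared local bounds over the $O(T)$ intervals, and multiplying by a Gaussian-decay profile should give
\[
\|B_u\mathbf 1_{|x|\le T}\|_2\lesssim s\sqrt{k L_u}.
\]
This equals $c_M u$ by the choice of $s_u$. The conditions $s_u\le u/(C\sqrt{k_uL_u})$ and $k_u\ge A_M L_u/\log L_u$ are exactly what balance these two terms against the approximation error. I expect this localized Sobolev step, done uniformly over the class so that $B_u$ does not depend on $f,g$, to be the crux. The fallback is a supremum over unit intervals of the local $H^1$ norms, each controlled by $s\sqrt{k}$ times the Gaussian profile.
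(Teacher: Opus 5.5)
Your proposal is correct, but your working argument is the Sobolev route in the last part, which differs from the paper's; the Christoffel-function attempt in steps 1--4 is rightly abandoned and can be deleted. Write $2h'=(\sqrt f-\sqrt g)\,\frac{f'}{f}+\sqrt g\,\bigl(\frac{f'}{f}-\frac{g'}{g}\bigr)$, with $|f'/f|\le |x|+M$. Bound $\int g\,(f'/f-g'/g)^2$ by \prettyref{thm:ub-compact-intro} at the threshold $s_u$, using that $\epsilon^2\log(1/\epsilon)$ is increasing for small $\epsilon$. This gives $\|h'\mathbf 1_{[-T,T]}\|_2\lesssim_M s_u\sqrt{L_u}$. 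Then on every unit interval $I\subset[-T,T]$, $\sup_I h^2\le\int_I h^2+2\|h\|_{L_2(I)}\|h'\|_{L_2(I)}\lesssim s_u^2\sqrt{L_u}$.

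Uniformity is therefore automatic, not the crux: each local supremum is controlled by the global quantities $\|h\|_2\le s_u$ and $\|h'\mathbf 1_{[-T,T]}\|_2$. So you can take $B_u$ equal to the constant $C_M s_uL_u^{1/4}$ on $[-T,T]$ and your tail envelope outside. The Gaussian profile and the ``fallback'' are unnecessary, and neither was justified. This gives $\|B_u\|_2^2\lesssim s_u^2L_u+e^{-(T-M)^2/2}$. Since $s_u\sqrt{L_u}=c_Mu/\sqrt{k_u}$, this is at most $u^2/16$ once $C$ is large and then $c_M$ is small. No polynomial-approximation error enters if you cite the theorem directly.

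The paper argues quite differently. It reduces to $k_u$-atom mixtures by Gauss quadrature, with pointwise error $(C_M/k)^{k/4}\varphi^{1/4}$ obtained from the Hermite expansion and Mehler's formula. For finite mixtures it writes $f-g=\varphi E$, where $E$ is an exponential sum with at most $2k$ terms. It then applies the Borwein--Erd\'elyi Nikolskii inequality on intervals of length $\asymp 1/(1+|y|+M)$, giving $h(y)^2\lesssim_M k(1+|y|)\rho^2$. That factor $k$ is the source of the $\sqrt{k_u}$ in $s_u$.

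Your route is shorter and needs no external inequality. It is also sharper: it gives a flat bulk bound $|h|\lesssim_M s(\log\frac1s)^{1/4}$ and would permit $s_u\asymp u/\sqrt{L_u}$. The price is that it bootstraps from the regret--Hellinger theorem. This is not circular, since that theorem is proved in \prettyref{sec:compact} without this lemma. The paper's envelope, by contrast, is self-contained and independent of the Bernstein/score machinery, using only the exponential-sum structure of finite mixtures plus quadrature.
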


With the key Lemma~\ref{lmm:scale-dependent-full-envelope}, we may bound the local bracketing number. We do so in the form of the entropy integral needed for the analysis of MLE \cite{wong.shen.1995}.
\begin{theorem}[Local bracketing integral for Gaussian mixtures]
\label{thm:full-local-bracketing-integral}
For every fixed $M<\infty$ there are constants $c_M,C_M<\infty$ such that,
for every $0<\epsilon\le c_M$,
\[
\sup_{f_0\in\calF_M}
\int_{\epsilon^2}^{\epsilon}
\sqrt{
\log
\bracketN\!\left(
u,\,
\calF_M\cap\{f:\Hell(f,f_0)\le\epsilon\},\,
\Hell
\right)
}\,du
\le
C_M\epsilon\sqrt{\log\frac1\epsilon}.
\]
\end{theorem}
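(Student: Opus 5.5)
We bound the local bracketing entropy at each scale $u\in[\epsilon^2,\epsilon]$ by combining the covering-to-bracketing upgrade (\prettyref{lmm:cover-to-bracket-full}), the square-root envelope (\prettyref{lmm:scale-dependent-full-envelope}), and the local covering bound (\prettyref{thm:localcovering}). Then we integrate.

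\textbf{Reduction to a local covering number.} Fix $f_0\in\calF_M$ and $u\in[\epsilon^2,\epsilon]$. Let $\mathcal A=\calF_M\cap\{f:\Hell(f,f_0)\le\epsilon\}$. Apply \prettyref{lmm:scale-dependent-full-envelope} at scale $u$. It gives $s_u=c_M u/\sqrt{k_uL_u}$ and an envelope $B_u$ with $\|B_u\|_2\le u/4\le u/2$, valid for all pairs in $\calF_M$, hence in $\mathcal A$, at distance at most $s_u$. Then \prettyref{lmm:cover-to-bracket-full} yields
\[
\log\bracketN(u,\mathcal A,\Hell)\le \log\coverN(s_u/2,\mathcal A,\Hell).
\]
Since $u\ge\epsilon^2$, we have $L_u\le\log(e^e/\epsilon^2)\lesssim\log(1/\epsilon)$ and $k_u\lesssim L_u/\log L_u$. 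Hence $s_u\gtrsim u/\log(1/\epsilon)$.

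\textbf{Applying the local covering bound.} Use \prettyref{thm:localcovering} with $\delta=\epsilon$ and $\rho=\min(s_u/2,\epsilon)$, requiring $\epsilon\le 0.001$. This gives
\[
\log\coverN(\rho,\mathcal A,\Hell)\le C_M\frac{\Lambda}{\log\Lambda}\log\Big(e+\frac{\epsilon^2}{\rho^2}\Big),\qquad \Lambda=\log\Big(e^e\frac{\epsilon}{\rho^2}\Big).
\]
Here $\rho\gtrsim\epsilon^2/\log(1/\epsilon)$, so $\Lambda\lesssim\log(1/\epsilon)$ and $\Lambda/\log\Lambda\lesssim \log(1/\epsilon)/\log\log(1/\epsilon)$, since $x/\log x$ is increasing. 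Also $\log(e+\epsilon^2/\rho^2)\lesssim \log(e+\epsilon/u)+\log\log(1/\epsilon)$. So, uniformly in $f_0$,
\[
\log\bracketN(u)\lesssim \frac{\log(1/\epsilon)}{\log\log(1/\epsilon)}\Big(\log\big(e+\tfrac{\epsilon}{u}\big)+\log\log\tfrac1\epsilon\Big).
\]

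\textbf{Integration.} Use $\sqrt{a+b}\le\sqrt a+\sqrt b$. The $\log\log$ part contributes at most $\epsilon\sqrt{\log(1/\epsilon)}$ after integrating over an interval of length at most $\epsilon$. For the other part, substitute $u=\epsilon t$:
\[
\int_{\epsilon^2}^{\epsilon}\sqrt{\log(e+\epsilon/u)}\,du\le \epsilon\int_0^1\sqrt{\log(e+1/t)}\,dt=O(\epsilon).
\]
This part contributes $O\big(\epsilon\sqrt{\log(1/\epsilon)/\log\log(1/\epsilon)}\big)$. Summing the two gives $C_M\epsilon\sqrt{\log(1/\epsilon)}$.

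\textbf{Main obstacle.} The delicate step is checking that the envelope's shrinkage of the covering radius, by a factor $\sqrt{k_uL_u}\approx\log(1/\epsilon)$, costs only an additive $\log\log$ inside the entropy rather than a multiplicative factor. This is exactly why we need the refined two-scale form of \prettyref{thm:localcovering}, and not just \prettyref{eq:localcovering} at $\rho=\delta/2$. We also need the constants $c_M$ constraining $u$ and $\epsilon\le0.001$ to be compatible, which is handled by taking $c_M$ small.
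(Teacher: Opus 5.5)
Your proposal is correct and follows the paper's proof essentially step for step: the envelope lemma plus the covering-to-bracketing upgrade reduce to $\log N(s_u/2,\cdot,\Hell)$, and the two-scale bound of \prettyref{thm:localcovering} with $\delta=\epsilon$ then gives entropy of order $\frac{L}{\log L}\bigl(\log(e+\epsilon/u)+\log L\bigr)$ with $L\asymp\log(1/\epsilon)$. The only difference is cosmetic and lies in the final integration: the paper substitutes $u=\epsilon e^{-t}$ and bounds the entropy by $C_M L(1+t)$, whereas you split the square root and substitute $u=\epsilon t$; both yield $C_M\epsilon\sqrt{\log(1/\epsilon)}$.
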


\begin{proof}
Fix $f_0\in\calF_M$, $0<u\le\epsilon$, and let $s_u$ and $B_u$ be as
in Lemma~\ref{lmm:scale-dependent-full-envelope}.  Applying
Lemma~\ref{lmm:cover-to-bracket-full} to the local ball
$
\mathcal A_\epsilon(f_0)
\equiv 
\calF_M\cap\{f:\Hell(f,f_0)\le\epsilon\}$ 
and then applying Theorem~\ref{thm:localcovering}, we obtain
\[
\log
\bracketN\!\left(
u,\,
\mathcal A_\epsilon(f_0),\,
\Hell
\right)
\le
\log
N\!\left(
s_u/2,\,
\mathcal A_\epsilon(f_0),\,
\Hell
\right)
\le
C_M
\frac{\Lambda_u}{\log\Lambda_u}
\log\left(e+\frac{\epsilon^2}{s_u^2}\right),
\]
where $\Lambda_u\equiv\log\left(e^e\epsilon/s_u^2\right)$.

Let $L=\log(e^e/\epsilon)$ and write $u=\epsilon e^{-t}$, so
$0\le t\le \log(1/\epsilon)$.  Since
\[
s_u^{-2}\le C_Mu^{-2}\{\log(e^e/u)\}^2,
\]
we have
\[
\Lambda_u\leq C_M (L+t+\log(L+t)),
\qquad
\log\left(e+\frac{\epsilon^2}{s_u^2}\right)
\le C_M\{1+t+\log(L+t)\}.
\]
Since $t \leq L$, the preceding entropy bound is at most $C_M L(1+t)$, and hence
\[
\begin{aligned}
\int_{\epsilon^2}^{\epsilon}
\sqrt{
\log
\bracketN\!\left(
u,\,
\calF_M\cap\{f:\Hell(f,f_0)\le\epsilon\},\,
\Hell
\right)
}\,du
&\le
C_M\epsilon \sqrt{L} \int_0^{\log(1/\epsilon)}
e^{-t}\sqrt{1+t}\,dt \\
&\le
C_M'\epsilon\sqrt L .
\end{aligned}
\]
Taking the supremum over $f_0$ proves the theorem.
\end{proof}

It remains to prove the envelope bound in Lemma~\ref{lmm:scale-dependent-full-envelope} for nonparametric Gaussian mixture densities. 
The main idea is to first bound the envelope for finite mixtures then approximate a general mixture pointwise with a finite one. 
Recognizing that the difference between two finite Gaussian mixtures is proportional to an exponential sum, the following auxiliary result is crucial.

\begin{lemma}[Nikolskii-type inequality for exponential sums]
\label{lmm:nikolskii-exponential-sums}
There is a universal constant $A<\infty$ such that the following holds.  Let
\[
E(x)=\sum_{j=1}^m a_j e^{\lambda_j x},
\qquad a_j,\lambda_j\in\mathbb R,
\]
and let $I\subset\mathbb R$ be an interval of length $|I|$.  If $y$ is
the midpoint of $I$, then
\[
|E(y)|^2\le \frac{A m}{|I|}\int_I |E(x)|^2\,dx .
\]
\end{lemma}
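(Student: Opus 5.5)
Since $|E|^2$ is invariant under translation and scaling of the variable, I will reduce to the normalized case. The substitution $x = y + \frac{|I|}{2}t$ with $t\in[-1,1]$ turns $E$ into another exponential sum $\tilde E(t)=\sum_j \tilde a_j e^{\tilde\lambda_j t}$ with $m$ terms, where $\tilde a_j = a_j e^{\lambda_j y}$ and $\tilde\lambda_j=\lambda_j|I|/2$. It then suffices to show
\[
|\tilde E(0)|^2 \le A' m \int_{-1}^1 |\tilde E(t)|^2\,dt
\]
with $A'$ universal. Undoing the change of variables gives the factor $2/|I|$, which is absorbed into $A$. I will also merge equal exponents, so the $\tilde\lambda_j$ can be taken distinct, with at most $m$ terms.

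The main tool is Turán-type, or Nazarov/Borwein--Erdélyi, Remez and Nikolskii inequalities for exponential sums. The cleanest route I know is the Borwein--Erdélyi Nikolskii-type inequality for the span of $\{e^{\lambda_j t}\}$ with real exponents:
\[
\|P\|_{L_\infty[a+\eta,b-\eta]} \le \left(\frac{c\,n}{\eta}\right)^{1/q}\|P\|_{L_q[a,b]}
\]
for sums with $n$ terms. The key feature is that the constant is independent of the exponents. Applied with $q=2$, $[a,b]=[-1,1]$ and $\eta=1/2$, it gives exactly $|\tilde E(0)|^2\le c\,m\int_{-1}^1|\tilde E|^2$. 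I would state this as the cited input. The midpoint is essential here, since at the endpoints the bound degrades to order $m^2$.

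If I instead need a self-contained argument, I would derive it via the Borwein--Erdélyi Bernstein-type inequality. That inequality says that for a real exponential sum $P$ with $n$ terms and $t$ in the interior of $[a,b]$,
\[
|P'(t)| \le \frac{c\,n}{\min(t-a,\,b-t)}\,\|P\|_{L_\infty[a,b]}.
\]
Combining it with a standard Nikolskii bootstrapping argument gives the $L_2$ version. Let $t^*$ be the maximizer of $|P|$ on the middle subinterval. Near $t^*$, on a neighborhood of length of order $1/n$, the Bernstein bound keeps $|P|\ge \tfrac12|P(t^*)|$. Hence $\int|P|^2\gtrsim |P(t^*)|^2/n$. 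Handling the sup over a shrinking family of intervals requires the usual trick of weighting the sup by the distance to the boundary, e.g. maximizing $(1-|t|)|P(t)|^2$.

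The main obstacle is this exponent-independent Bernstein/Nikolskii inequality for Müntz-type exponential systems. It is deep, resting on Newman's inequality and the Borwein--Erdélyi theory, and I would cite it rather than reprove it. The remaining steps, namely the rescaling and converting $\sup$ to the midpoint value with the $m/|I|$ scaling, are routine.
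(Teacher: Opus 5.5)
Your main route matches the paper's proof: the paper cites Borwein--Erd\'elyi's pointwise Nikolskii inequality $|f(y)|\le\bigl(c(1+qm)/\min\{y-a,b-y\}\bigr)^{1/q}\|f\|_{L_q[a,b]}$ for $f\in\mathcal E_m$ and applies it with $q=2$ and $y$ the midpoint of $I=[a,b]$, so the rescaling and merging of exponents are not needed, and your Bernstein-based backup (maximizing $(1-|t|)|P(t)|^2$) also works. One side remark is wrong: at an endpoint the bound does not degrade to order $m^2$ but fails outright for unrestricted real exponents, since already $E(x)=e^{\lambda x}$ on $[0,1]$ gives $|E(1)|^2/\int_0^1E^2\approx 2\lambda\to\infty$.
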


\begin{proof}
\cite[Theorem~2.2]{borwein2000pointwise} states that, for
\[
\mathcal E_n
=
\left\{a_0+\sum_{j=1}^n a_j e^{\lambda_j t}:
a_j,\lambda_j\in\mathbb R\right\},
\]
there is a universal constant $c$ such that, 
for any nonzero $f\in\mathcal E_m$, any $a<y<b$ and $q>0$,
\[
|f(y)|
\le
\left(
\frac{c(1+qm)}{\min\{y-a,b-y\}}
\right)^{1/q} \|f\|_{L_q[a,b]}
\]
Taking $I=[a,b]$, $q=2$, and $y=(a+b)/2$ yields the desired bound.
\end{proof}

\begin{lemma}[Bounded Gaussian likelihood ratio]
\label{lmm:local-gaussian-comparability}
There is $C_M<\infty$ such that, for every $y\in\mathbb R$, 
$|u|\le M$ and $x\in I_y \equiv [
y-\frac{1}{8(1+|y|+M)},\,
y+\frac{1}{8(1+|y|+M)}
],$
we have
\[
C_M^{-1}\varphi(y-u)\le \varphi(x-u)\le C_M\varphi(y-u).
\]
\end{lemma}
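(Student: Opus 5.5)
The argument compares the exponents directly. Write
\[
\log\frac{\varphi(x-u)}{\varphi(y-u)} = \frac{(y-u)^2-(x-u)^2}{2} = \frac{(y-x)(y+x-2u)}{2},
\]
and set $h:=x-y$, so that $|h|\le \frac{1}{8(1+|y|+M)}$. It then suffices to show that $|y+x-2u|$ is at most a constant multiple of $1+|y|+M$. Once we have that, the product $|h|\cdot|y+x-2u|$ is bounded by an absolute constant, and exponentiating gives the two-sided comparison with $C_M$ equal to an absolute constant.

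For the first step, the triangle inequality gives
\[
|y+x-2u|\le 2|y|+|h|+2|u|\le 2|y|+\tfrac18+2M\le 2(1+|y|+M).
\]
Combining this with the bound on $|h|$ gives
\[
\left|\log\frac{\varphi(x-u)}{\varphi(y-u)}\right|\le \frac{1}{2}\cdot\frac{2(1+|y|+M)}{8(1+|y|+M)}=\frac18.
\]

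Exponentiating, we get $e^{-1/8}\varphi(y-u)\le\varphi(x-u)\le e^{1/8}\varphi(y-u)$. So $C_M=e^{1/8}$ works, and in fact this constant does not depend on $M$.

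The main point to check is that the window $I_y$ shrinks like $1/(1+|y|+M)$. This scaling exactly cancels the linear growth in $|y|$ and $M$ of the exponent's derivative, which is why the ratio stays uniformly bounded.
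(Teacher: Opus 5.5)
Your proof is correct and is essentially the paper's argument. Both bound $\frac12\bigl|(x-u)^2-(y-u)^2\bigr|$ by $|h|$ times a quantity linear in $1+|y|+M$; the paper expands this as $|h|\,|y-u|+h^2/2$ instead of factoring, and your explicit constant $e^{1/8}$ confirms that $C_M$ can be taken absolute.
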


\begin{proof}
Let $h=x-y$.  For $x\in I_y$,
$|h|\le [8(1+|y|+M)]^{-1}$.  Uniformly over $|u|\le M$,
\[
\left|\log\frac{\varphi(x-u)}{\varphi(y-u)}\right|
=
\frac12\left|(x-u)^2-(y-u)^2\right|
\le
|h|\,|y-u|+\frac{h^2}{2}
\le C_M .
\]
Exponentiating proves the claim.
\end{proof}

Define the classes of $k$-atomic mixing distributions and $k$-component Gaussian mixtures:
\[
\calM_{k,M}
\equiv
\{G\in\mathcal P([-M,M]):G\text{ has at most }k\text{ atoms}\},
\qquad
\calF_{k,M}\equiv\{f_G:G\in\calM_{k,M}\}.
\]

\begin{lemma}[Finite-mixture square-root envelope]
\label{lmm:pairwise-square-root-envelope}
There are constants $0<a_M\le 1/4$ and $A_M<\infty$ such that the
following holds.  
For any $k$ and any $f,g\in\calF_{k,M}$ such that
$\Hell(f,g)\leq \rho \le a_M$, we have \[
    |\sqrt{f(y)}-\sqrt{g(y)}|^2\le A_M [k(1+|y|)\rho^2
\wedge
\exp\{-((|y|-M)_+)^2/2\}] =: E_{k,\rho}(y)^2
\]
for all $y \in \R$, 
and the upper bound satisfies $\|E_{k,\rho}\|_{L_2(dx)}
\le
A_M\rho\sqrt{k\log(1/\rho)}.$

\end{lemma}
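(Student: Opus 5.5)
The approach is to prove the two pointwise bounds separately and then integrate their minimum. The Gaussian-tail bound is immediate. Since $f,g\in\calF_{k,M}$ are mixtures of $\varphi(\cdot-u)$ with $|u|\le M$, we have $f(y),g(y)\le \sup_{|u|\le M}\varphi(y-u)=\varphi((|y|-M)_+)$. Hence $|\sqrt f-\sqrt g|^2\le \max(f,g)\lesssim \exp\{-((|y|-M)_+)^2/2\}$.

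The substantive part is the bound $A_M k(1+|y|)\rho^2$, which I would get from \prettyref{lmm:nikolskii-exponential-sums}. Write $f=\sum_{i=1}^k p_i\varphi(\cdot-u_i)$ and $g=\sum_{i=1}^k q_i\varphi(\cdot-v_i)$. Then
\[
f(x)-g(x)=\varphi(x)E(x),\qquad E(x)=\sum_{i}p_ie^{-u_i^2/2}e^{u_ix}-\sum_i q_ie^{-v_i^2/2}e^{v_ix},
\]
which is an exponential sum with at most $2k$ terms and no separate constant term. Fix $y$ and take the interval $I_y$ of \prettyref{lmm:local-gaussian-comparability}, whose length is $|I_y|=[4(1+|y|+M)]^{-1}$. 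I will use the following comparability facts on $I_y$.
\begin{itemize}
\item Applying \prettyref{lmm:local-gaussian-comparability} with $u=0$ gives $\varphi(x)\asymp_M\varphi(y)$ for all $x\in I_y$.
\item Applying the same lemma termwise to every component of the mixtures gives $f(x)+g(x)\asymp_M f(y)+g(y)$ for all $x\in I_y$.
\end{itemize}

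Combining these with the Nikolskii inequality,
\[
\frac{(f(y)-g(y))^2}{f(y)+g(y)}=\frac{\varphi(y)^2E(y)^2}{f(y)+g(y)}
\le \frac{2Ak}{|I_y|}\int_{I_y}\frac{\varphi(y)^2E(x)^2}{f(y)+g(y)}\,dx
\lesssim_M k(1+|y|)\int_{I_y}\frac{(f-g)^2}{f+g}\,dx .
\]
Extending the last integral to all of $\reals$, it is at most $\int (f-g)^2/(f+g)\le 2\Hell^2(f,g)\le 2\rho^2$, using $(\sqrt f+\sqrt g)^2\le 2(f+g)$. Finally, $|\sqrt f-\sqrt g|^2=(f-g)^2/(\sqrt f+\sqrt g)^2\le (f-g)^2/(f+g)$, which gives the first bound. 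Together with the tail bound, this yields the pointwise claim with a suitable $A_M$.

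For the $L_2$ norm, I would split at $T=M+2\sqrt{\log(1/\rho)}$.
\begin{itemize}
\item On $|y|\le T$, use the first bound: $\int_{|y|\le T}k(1+|y|)\rho^2\,dy\lesssim k\rho^2T^2\lesssim_M k\rho^2\log(1/\rho)$. This uses $\rho\le a_M\le 1/4$, so that $\log(1/\rho)$ is bounded below.
\item On $|y|>T$, use the tail bound: $\int_{|y|>T}e^{-(|y|-M)^2/2}\,dy\lesssim e^{-2\log(1/\rho)}=\rho^4\le\rho^2$.
\end{itemize}
Summing the two pieces and taking square roots gives $\|E_{k,\rho}\|_{L_2}\le A_M\rho\sqrt{k\log(1/\rho)}$ after enlarging $A_M$.

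I expect the only delicate step to be the transfer of the denominator $f(y)+g(y)$, and of the Gaussian factor $\varphi(y)$, inside the integral over $I_y$. This is exactly why the interval must have length of order $1/(1+|y|)$, and it is this length that produces the factor $(1+|y|)$ in the bound. It is handled by \prettyref{lmm:local-gaussian-comparability}, applied uniformly over the atoms, so no dependence on the mixing weights or on $k$ enters except through the number of exponentials in the Nikolskii inequality.
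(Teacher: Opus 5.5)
Your proposal is correct and follows essentially the same route as the paper's proof. You write $f-g=\varphi E$ with $E$ an exponential sum of at most $2k$ terms, apply \prettyref{lmm:nikolskii-exponential-sums} on $I_y$, use \prettyref{lmm:local-gaussian-comparability} to move $\varphi$ and $f+g$ inside the integral, bound by $\int (f-g)^2/(f+g)\le 2\Hell^2(f,g)$, add the trivial Gaussian tail bound, and integrate after splitting at radius of order $\sqrt{\log(1/\rho)}$ (your $T=M+2\sqrt{\log(1/\rho)}$ in place of the paper's $R=A_M\sqrt{\log(1/\rho)}$).
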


\begin{proof}
Fix $y\in\mathbb R$.  After combining equal locations, $f-g$ is a linear combination of $m\le2k$ shifted Gaussians.  Write
\[
(f-g)(x)=\varphi(x)E(x),
\qquad
E(x)=\sum_{j=1}^m a_j e^{u_jx-u_j^2/2}.
\]
Apply the key Lemma~\ref{lmm:nikolskii-exponential-sums} on the interval
$I_y$ from Lemma~\ref{lmm:local-gaussian-comparability}:
\begin{align*}
\left|\frac{(f-g)(y)}{\varphi(y)}\right|^2
&\le
\frac{C_M m}{|I_y|}
\int_{I_y}
\left|\frac{(f-g)(x)}{\varphi(x)}\right|^2\,dx \tag{\prettyref{lmm:nikolskii-exponential-sums}}
\\
&\le \frac{C_M m}{|I_y|\varphi(y)^2}
\int_{I_y}(f-g)(x)^2\,dx . \tag{\prettyref{lmm:local-gaussian-comparability}}
\end{align*}

Using $|I_y|^{-1}\le C_M(1+|y|)$ and $m\le2k$,
\[
|(f-g)(y)|^2
\le
C_M k(1+|y|)\int_{I_y}(f-g)(x)^2\,dx .
\]
Again by Lemma~\ref{lmm:local-gaussian-comparability},
$(f+g)(x)\le C_M(f+g)(y)$ on $I_y$.  Hence
\[
(\sqrt{f(y)}-\sqrt{g(y)})^2\leq 
\frac{|(f-g)(y)|^2}{(f+g)(y)}
\le
C_M k(1+|y|)
\int_{I_y}\frac{(f-g)(x)^2}{(f+g)(x)}\,dx
\le
2C_M k(1+|y|)\rho^2
\]
where we used $\int \frac{(f-g)^2}{f+g} \leq 2\Hell^2(f,g)$.
Finally, we have the tail bound
\[
(\sqrt{ f(y)}-\sqrt{g(y)})^2\leq 
f(y)+g(y)
\le
2\sup_{|u|\le M}\varphi(y-u)
\le
C_M\exp\{-((|y|-M)_+)^2/2\}.
\]
This shows $E_{k,\rho}$ is a valid upper bound for $|\sqrt f-\sqrt g|$.

It remains to integrate.  Let $R=A_M\sqrt{\log(1/\rho)}$, with $A_M$
large enough.  On $|y|\le R$,
\[
\int_{|y|\le R} E_{k,\rho}(y)^2\,dy
\le
A_M k\rho^2 R^2
\le
A_M k\rho^2\log(1/\rho).
\]
On $|y|>R$, integrating the Gaussian tail yields at most $\rho^4$. This shows the desired $\|E_{k,\rho}\|_2$.
\end{proof}

\begin{lemma}[Controlling finite-mixture approximation error]
\label{lmm:sqrt-quadrature-envelope}
For every fixed $M<\infty$ there is a constant $C_M<\infty$ such
that the following holds.  For every integer $k\ge1$ there is a nonnegative
function $D_k$, depending only on $k$ and $M$, such that
\[
\|D_k\|_{L_2(dx)}
\le
\left(\frac{C_M}{k}\right)^{k/4},
\]
and, for every probability measure $G$ supported on $[-M,M]$, there is a
probability measure $G_k$ supported on at most $k$ points in $[-M,M]$
for which
\[
\bigl|\sqrt{f_G(x)}-\sqrt{f_{G_k}(x)}\bigr|
\le D_k(x)
\qquad\text{for all }x\in\mathbb R .
\]
\end{lemma}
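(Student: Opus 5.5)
Take $G_k$ to be a moment-matching discretization of $G$: a $k$-point measure on $[-M,M]$ whose first $k-1$ moments agree with those of $G$. A Gauss quadrature rule for $G$ with $\lceil k/2\rceil$ nodes matches moments up to order about $k-1$, and such rules always exist with nodes in $[-M,M]$ by Carathéodory/Tchakaloff. Then $m_j(G)=m_j(G_k)$ for $j<k$, and $|m_j(G)-m_j(G_k)|\le 2M^j$ for all $j$.

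The Hermite expansion \prettyref{eq:hermite-exp} gives
\[
\frac{f_G-f_{G_k}}{\varphi}(x)=\sum_{j\ge k}\frac{m_j(G)-m_j(G_k)}{j!}H_j(x).
\]
The pointwise bound has to be uniform in $G$, so we need a pointwise estimate for $H_j$. We use Cramér's inequality $|H_j(x)|\le C\sqrt{j!}\,e^{x^2/4}$ for probabilists' Hermite polynomials. This yields
\[
|f_G-f_{G_k}|(x)\le C\varphi(x)e^{x^2/4}\sum_{j\ge k}\frac{2M^j}{\sqrt{j!}}\le C'_M e^{-x^2/4}\left(\frac{C_M}{k}\right)^{k/2}=:\eta_k(x).
\]
Alternatively, one can avoid Hermite functions altogether. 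Write $(f_G-f_{G_k})(x)=\int\varphi(x-u)\,(G-G_k)(du)$ and Taylor-expand $e^{xu}$ in $u$ to order $k-1$; the low-order terms cancel by moment matching, and the remainder is at most $\varphi(x)e^{M|x|}(M|x|)^k/k!$. This version is cruder in $x$ but still integrable.

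To pass from the density difference to square roots, use
\[
|\sqrt a-\sqrt b|\le\min\{\sqrt{|a-b|},\ \sqrt a+\sqrt b\}.
\]
Define
\[
D_k(x):=\min\{\sqrt{\eta_k(x)},\ 2\sup_{|u|\le M}\sqrt{\varphi(x-u)}\}.
\]
This function depends only on $k$ and $M$, and it dominates $|\sqrt{f_G}-\sqrt{f_{G_k}}|$ for every $G$.

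It remains to integrate. We have $\int\eta_k(x)\,dx\le C_M(C_M/k)^{k/2}$, since $e^{-x^2/4}$ is integrable. Hence $\|D_k\|_2^2\le\int\eta_k\le C_M(C_M/k)^{k/2}$, so $\|D_k\|_2\le (C'_M/k)^{k/4}$ after absorbing constants, which is the claimed rate. If we use the Taylor-remainder bound instead, we split at $|x|\le R$ and treat the tail with the Gaussian envelope.

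The main obstacle is getting the uniform pointwise control of the Hermite tail, i.e.\ making sure $\sum_{j\ge k}M^j|H_j(x)|/j!$ times $\varphi$ is square-integrable after the square-root step, with the $k^{-k/4}$ rate. Cramér's bound (or the Taylor remainder) handles this cleanly, and the square root costs exactly the halving of the exponent from $k/2$ to $k/4$.
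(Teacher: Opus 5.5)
Your proof is correct and has the same skeleton as the paper's: a moment-matching discretization, the Hermite expansion of $(f_G-f_{G_k})/\varphi$, a bound of order $(C_M/k)^{k/2}e^{-x^2/4}$ uniform in $G$, then $|\sqrt a-\sqrt b|\le\sqrt{|a-b|}$ and integration.

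The only real difference is the pointwise Hermite estimate. You bound each term with Cram\'er's inequality $|H_j(x)|\le K\sqrt{j!}\,e^{x^2/4}$ and sum by the triangle inequality. The paper instead uses Cauchy--Schwarz with weights $e^{\pm 2m}$ and evaluates $\sum_m e^{-2m}h_m(x)^2\varphi(x)$ exactly via the diagonal Mehler formula. Since $e^{-x^2/4}\propto\sqrt{\varphi(x)}$, both routes give the same envelope. Cram\'er is a stronger termwise fact that makes the summation immediate; Mehler is a closed-form identity that is easy to cite but controls only a weighted average. Matching only $k-1$ moments, rather than the paper's $2k-1$ via $k$-node Gauss quadrature, is also fine, because the stated rate $(C_M/k)^{k/4}$ leaves slack.

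One correction concerns your side remark. Taylor-expanding only $e^{xu}$ does not work, since $\varphi(x-u)=\varphi(x)e^{xu}e^{-u^2/2}$ and moment matching does not annihilate $\int u^je^{-u^2/2}\,(G-G_k)(du)$. For instance, at $x=0$ your stated remainder would force $f_G(0)=f_{G_k}(0)$, which fails in general. Expanding all of $e^{xu-u^2/2}$ instead produces the coefficients $H_j(x)/j!$ and leads back to the Hermite route.
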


\begin{proof}
Let $G_k$ be a $k$-point Gauss quadrature for $G$, so that $G_k$
is supported on $[-M,M]$ and matches the first $2k-1$ moments of $G$;
see, for example, \cite[Theorem~3.3.3]{wu2020polynomial}.  Define
$q=f_G-f_{G_k}$.  Since the first $2k-1$ moments of $G-G_k$ vanish,
the Hermite expansion \prettyref{eq:hermite-exp} gives
\[
q(x)=\varphi(x)\sum_{m>2k-1}
\frac{\Delta_m}{\sqrt{m!}}h_m(x),
\qquad
\Delta_m=\int \theta^m\,(G-G_k)(d\theta)
\]
where 
$h_m=H_m/\sqrt{m!}$ denotes the Hermite polynomials orthonormal in
$L_2(\varphi)$.

By Cauchy--Schwarz, for any $x$,
\[
\frac{q(x)^2}{\varphi(x)}
\le
\left(\sum_{m>2k-1}\frac{\Delta_m^2e^{2m}}{m!}\right)
\left(\sum_{m>2k-1}e^{-2m}h_m(x)^2\varphi(x)\right).
\]
The second factor is uniformly bounded in $x$ by the diagonal Mehler formula
\cite[\href{https://dlmf.nist.gov/18.18.E28}{Eq.~(18.18.28)}]{NIST:DLMF}:
\[
\sum_{m\ge0}e^{-(2m+1)}h_m(x)^2\varphi(x)
=
(4\pi\sinh 2)^{-1/2}
\exp\!\left\{-\frac{x^2}{2}\tanh 1\right\}.
\]
For the first factor, since $|\Delta_m|\le2C_M^m$, applying Stirling's formula yields
\[
\sum_{m>2k-1}\frac{\Delta_m^2e^{2m}}{m!}
\le
\left(\frac{C_M}{k}\right)^k,
\]
with $C_M$ chosen large enough.  Therefore
$|f_G(x)-f_{G_k}(x)|\le (C_M/k)^{k/2} \sqrt{\varphi(x)}$
Finally, since 
$(\sqrt a-\sqrt b)^2\le |a-b|$ for $a,b\ge0$, we have 
 where
$|\sqrt{f_G(x)}-\sqrt{f_{G_k}(x)}| \leq (C_M/k)^{k/4}\varphi(x)^{1/4} \equiv D_k(x)$. 
\end{proof}

\begin{proof}[Proof of Lemma~\ref{lmm:scale-dependent-full-envelope}]
Fix $0<u\le c_M$ and let $L_u,k_u,s_u$ be as in the statement.  
After decreasing $c_M$, if
necessary, $\log(1/s_u)\le C_ML_u$ and
$\log\log(e^e/s_u)\ge c_M\log L_u$.  Choose the constant $A_M$ in the
definition of $k_u$ sufficiently large, depending only on $M$, 
so that
\[
k_u
\ge
C_M
\frac{\log(1/s_u)}{\log\log(e^e/s_u)}
\]
for sufficiently large $C_M$. 
After increasing $A_M$ once more, the preceding lower bound on $k_u$ implies $(C_M/k_u)^{k_u/4}\le s_u/16$. Hence
Lemma~\ref{lmm:sqrt-quadrature-envelope} gives $\|D_{k_u}\|_2\le s_u/16$.  Given
$f=f_G$ and $g=f_H$ with $\Hell(f,g)\leq s_u$, denote the corresponding $k_u$-point quadrature $f_k=f_{G_k}$ and $g_k=f_{H_k}$.  Then
\[
\Hell(f_k,g_k)
\le
\Hell(f,g)+2\|D_{k_u}\|_2
\le
2s_u .
\]
Since $s_u\le c_Mu\le c_M^2$, decrease $c_M$ further so that $2s_u\le a_M$ for all $0<u\le c_M$.
Let $E_{k_u,2s_u}$ be the finite-mixture envelope from
Lemma~\ref{lmm:pairwise-square-root-envelope}. Then
\[
|\sqrt f-\sqrt g|
\le
|\sqrt f-\sqrt {f_k}|
+|\sqrt {f_k}-\sqrt {g_k}|
+|\sqrt {g_k}-\sqrt g|
\le 2D_{k_u}+E_{k_u,2s_u} \equiv B_u .
\]
Moreover, since $\log(1/s_u)\le C_ML_u$,
\[
\|B_u\|_2
\le
2\|D_{k_u}\|_2
+C_Ms_u\sqrt{k_u\log(1/s_u)}
\le
s_u/8+C_Mc_Mu \leq u/4,
\]
upon choosing $c_M$ sufficiently small.
\end{proof}

\subsection{Proofs of Theorems \ref{thm:lcb-risk}--\ref{thm:npmle-risk} and Corollaries \ref{cor:lcb-regret}--\ref{cor:npmle-regret}}
\label{sec:remaining-proof}

\begin{proof}[Proof of \prettyref{thm:lcb-risk}]
Let $\tilde G$ denote the Birg\'e--Le Cam estimator based on pairwise comparison, whose Hellinger risk can be bounded using the local Hellinger entropy.
By \cite[Theorem~32.10]{polyanskiy.wu.2025}, the squared Hellinger risk of $\tilde G$ is bounded by a constant multiple of any
$\epsilon_n^2$ satisfying
$
n\epsilon_n^2
\ge
\log \coverN_{\mathrm{loc},H}(\calF_M,\epsilon_n)$.
The proof is completed by applying Theorem~\ref{thm:localcovering} above, which shows for all small
$\epsilon$,
\[
\log \coverN_{\mathrm{loc},H}(\calF_M,\epsilon)
\le
C_M\frac{\log(1/\epsilon)}{\log\log(1/\epsilon)}. \qedhere
\]
\end{proof}

\begin{proof}[Proof of \prettyref{thm:npmle-risk}]
This is an immediate consequence of combining the bracketing entropy integral in
\prettyref{thm:full-local-bracketing-integral} with the classical result of Wong and
Shen on the Hellinger risk of the MLE (see \cite[Theorem~2 and Remark~(ii) thereafter]{wong.shen.1995}).
\end{proof}

\begin{proof}[Proof of \prettyref{cor:lcb-regret}]
Let
\[
\psi(x)=x\frac{\log(1/\sqrt{x})}{\log\log(1/\sqrt{x})},
\qquad \psi(0)=0.
\]
For some sufficiently small absolute $x_0>0$, the function $\psi$ is
increasing and concave on $[0,x_0]$.   On $\{\hellinger^2(f_{\tilde G}, f_G)\le
x_0\}$,
\prettyref{thm:ub-compact-intro} gives
$\Regret(\tilde G\|G)\le C_M\psi(\hellinger^2(f_{\tilde G}, f_G))$.  Moreover,
$\Regret(\tilde G\|G)\le4M^2$, since both posterior means lie in
$[-M,M]$.  Hence, writing $\epsilon^2=\Expect_G \hellinger^2(f_{\tilde G}, f_G) $,
Jensen's and Markov's
inequalities give, for all sufficiently small $\epsilon^2$,
\[
\begin{aligned}
\Expect_G\Regret(\tilde G\|G)
&\le
C_M\Expect_G\!\left[\psi(\hellinger^2(f_{\tilde G}, f_G))\mathbf 1_{\{\hellinger^2(f_{\tilde G}, f_G)\le x_0\}}\right]
+4M^2\Prob_G(\hellinger^2(f_{\tilde G}, f_G)>x_0)\\
&\le C_M\psi(\epsilon^2)+\frac{4M^2}{x_0} \epsilon^2
\le C_M\psi(\epsilon^2).
\end{aligned}
\]
By \prettyref{thm:lcb-risk}, uniformly over
$G\in\mathcal P([-M,M])$,
$\epsilon^2\le C_M(\log n)/(n\log\log n)$.  Therefore
\[
\Expect_G\Regret(\tilde G\|G)
\le
\frac{C_M}{n}
\left(\frac{\log n}{\log\log n}\right)^2. 
\]
The above proves the upper bound.
The lower bound is from \cite{polyanskiy2021sharp}. \qedhere
\end{proof}

\begin{proof}[Proof of \prettyref{cor:npmle-regret}]
Without essential loss of generality we may assume $M'=M$.
Apply the same truncation argument in the proof of \prettyref{cor:lcb-regret}, now
using \prettyref{thm:npmle-risk} to bound
$\Expect_G\Hell^2(f_{\hat G_n},f_G)\le C_M(\log n)/n$.
\end{proof}

    \bibliographystyle{alpha}
    \bibliography{eb_regret_refs}

    \end{document}